\newlength{\depthofsumsign}
\newlength{\totalheightofsumsign}
\newlength{\heightanddepthofargument}
\newcommand*{\DivideLengths}[2]{%
  \strip@pt\dimexpr\number\numexpr\number\dimexpr#1\relax*65536/\number\dimexpr#2\relax\relax sp\relax
}
\newcommand{\N}{\mathbb{N}}
\newcommand{\R}{\mathbb{R}}
\newcommand{\C}{\mathbb{C}}
\newcommand{\norme}[1]{\left\Vert #1\right\Vert}
\theoremstyle{plain}
\newtheorem{thm}{Theorem}[section]
\newtheorem{prop}[thm]{Proposition}
\newtheorem{lem}[thm]{Lemma}
\newtheorem{cor}[thm]{Corollary}
\theoremstyle{definition}
\newtheorem{df}[thm]{Definition}
\theoremstyle{remark}
\newtheorem{rq1}[thm]{Remark}
\newtheorem{ex}[thm]{Example}
\newtheorem{q}[thm]{Question}
\theoremstyle{break} 
\theoremstyle{nonumberplain}
\title[$\gamma$-BOUNDEDNESS OF $C_0$-SEMIGROUPS]{ \normalsize $\gamma$-BOUNDEDNESS OF $C_0$-SEMIGROUPS AND THEIR $H^{\infty}$-FUNCTIONAL CALCULI}
\subjclass[2010]{47A60, 47D06}
\keywords{$\gamma$-boundedness, $K$-convex space, $H^{\infty}$-calculus, Half-plane type operators}
\author[L. ARNOLD]{LORIS ARNOLD} 
\address{
LABORATOIRE DE MATHÉMATIQUES DE BESANÇON, UMR 6623, CNRS \\ 
UNIVERSITÉ DE FRANCHE-COMTÉ  \\ 
25030 BESANÇON CEDEX\\
FRANCE}
\email{loris.arnold@univ-fcomte.fr}
\thanks{This work is supported by the French 
``Investissements d'Avenir" program, 
project ISITE-BFC (contract ANR-15-IDEX-03).} 
\begin{document}

{\begin{flushleft}\baselineskip9pt\scriptsize

\end{flushleft}}
\vspace{18mm} \setcounter{page}{1} \thispagestyle{empty}

\begin{abstract}

In this article we discuss the notion of $\gamma$-$H^{\infty}$-bounded calculus, 
strong $\gamma$-$m$-$H^{\infty}$-bounded calculus on half-plane and weak-$\gamma$-Gomilko-Shi-Feng 
condition and give a connection between them. Then we state a characterization of generation 
of $\gamma$-bounded $C_0$-semigroup in $K$-convex space, which leads to a version 
of Gearhart-Prüss on $K$-convex space. 

\end{abstract}

\maketitle


\section{INTRODUCTION}

The $H^{\infty}$-functional calculus for a sectorial operator and a strip-type operator have played and important role in the spectral theory and evolutions equations \cite{haa1}.
The $H^{\infty}$-functional calculus for a half-plane type operator is a recent tool studied in \cite{bat-haa}. 
As for sectorial and strip-type operators, it is natural to construct a holomorphic functional calculus for
 a half-plane type operator $A$ via the Dunford formula
\[
f(A) = \frac{1}{2\pi i} \int_{\partial R} f(\lambda)R(\lambda,A)d\lambda.
\]
Here, $R$ is a half-plane and $f$ is a bounded analytic function on $R$ 
with good properties which ensure that $f(A)$ is bounded. This construction 
allows to define a notion of bounded $H^{\infty}$-functional calculus for a half-plane type operator. 
Contrary to bounded $H^{\infty}$-functional calculus for a sectorial
or a strip-type operator, the bounded $H^{\infty}$-functional calculus for a half-plane type operator has no characterization with simple estimates, even on Hilbert space. However, a weaker notions of bounded $H^{\infty}$-functional calculus, 
called strong $m$-bounded functional calculus (Definition \ref{mbounded}) turn out to be equivalent to a 
condition studied independently by Gomilko \cite{gom} and Shi and Feng \cite{shifeng} (Definition \ref{GFS}), 
called GFS condition in this paper. Furthermore, they show that this condition is sufficient for 
the generation of bounded $C_0$-semigroups. Therefore, the set (containing all 
sectorial operators of type $<\frac{\pi}{2}$) of all half-plane type operators 
which have GFS is included  (equal when $X$ is Hilbert) 
in the set of all negative generators of bounded $C_0$-semigroup. 

The aim of this paper is to study the generation of a $\gamma$-bounded $C_0$-semigroups on a Banach space $X$, 
that is, a $C_0$-semigroup $(T_t)_{t \geqslant 0}$ on a Banach space $X$ such that the set $\{T(t)
\, :\, t\geq 0\}$ is $\gamma$-bounded. A first step is to consider a stronger condition than 
the GFS condition (however equivalent on Hilbert spaces), that we call 
weak $\gamma$-Gomilko-Shi-Feng (Definition \ref{WgammaGFS}) and abreviate as the $W\gamma$-GFS condition. 
It turns out that the $W\gamma$-GFS condition
is equivalent to a notion of $\gamma$-bounded strong $m$-bounded functional calculus.  
Furthermore the latter 
condition is sufficient for the generation of weak $\gamma$-bounded $C_0$-semigroups, and hence
for the generation of $\gamma$-bounded $C_0$-semigroups when the underlying space is $K$-convex. 
Therefore when $X$ is $K$-convex, the set (containing all $\gamma$-sectorial operators of $\gamma$-type $<\frac{\pi}{2}$) of all operators which satisfy the $W\gamma$-GFS condition is equal to the set of 
all negative generators of $\gamma$-bounded $C_0$-semigroups. 
This last statement is contained in our main result
(Theorem \ref{shifengRthm}), which gives equivalence of the
generation of $\gamma$-bounded $C_0$-semigroup with the $\gamma$-m-bounded functional calculus,
as well as with some estimates of the resolvent of the negative generator and its adjoint. 
From this theorem, we deduce, when $X$ is $K$-convex space, 
a $\gamma$-bounded version of the Gearhart-Prüss Theorem (Corollary \ref{corGP}) 
and the following result: if $(T_t)_{t\geq  0}$ is a bounded $C_0$-semigroup on a 
$K$-convex space and if the set $\{e^{-\delta t}T_t\, :\, t\geq 0 \}$ 
is $\gamma$-bounded for one $\delta >0$, then it is $\gamma$-bounded for each $\delta >0$. 
Moreover it is possible to find an example of bounded $C_0$-semigroup 
such that $\{e^{-\delta t}T_t\, :\, t\geq 0 \}$ is $\gamma$-bounded for all $\delta >0$ but not 
for $\delta = 0$.  

Now we describe the structure of the paper. Section 2 only contains preliminary results. 
We recall important results of \cite{bat-haa}, then we collect some results about 
$\gamma$-boundedness and weak $\gamma$-boundedness and generalized square functions. 
In Section 3, we discuss the $W\gamma$-GFS condition and strong $\gamma$-$m$-bounded $H^{\infty}$-functional calculus. We give our main result in Section 4, namely a version of Gomilko-Shi-Feng Theorem on $K$-convex spaces. Section 5 is devoted 
to some consequences of the results of Section 4, in particular
we state a version of the Gearhart-Prüss Theorem in $K$-convex spaces. 
Finally, Section 6 is dedicated to an overview of the implications between the different notions of $H^{\infty}$-bounded functional calculus for half-plane type operator and the generation of bounded and $\gamma$-bounded $C_0$-semigroups.



\section{BACKGROUND AND PRELIMINARY RESULTS}

For any Banach spaces $X,Y$, we let $\mathcal{L}(X,Y)$ denote the space of
all bounded linear operators from $X$ into $Y$. If $Y=X$, we write 
$\mathcal{L}(X)$ instead of $\mathcal{L}(X,X)$. 
If $A$ is a closed operator on $X$, we denote by $Dom(A)$, $\rho(A)$ and $\sigma(A)$ the domain, 
the resolvent set and the spectrum of $A$,
respectively. When $\lambda \in \rho(A)$, we let $R(\lambda,A) = (\lambda I - A)^{-1}$
denote the corresponding resolvent operator.

\subsection{Half-plane type operator.}
In this subsection we review the definitions of half-plane type operators 
and their functional calculi, 
following \cite{bat-haa}. 

We fix a real number $\omega$. For any $\alpha\leq \omega$, we consider  
the right open half-plane 
$$
R_\alpha= \{ z \in \C\, :\, Re(z) > \alpha \}.
$$

\begin{df}
Let $A$ be a closed and densely defined operator on $X$. We will say that $A$ 
is of \textit{half-plane type} $\omega$  if $\sigma(A) \subset \overline{R_{\omega}}$ and 
\[
\forall\, \alpha<\omega,\qquad
\sup \{\norme{R(z,A)}\, :\, Re(z) \leq \alpha \} < \infty.
\]
\end{df}

We will say that a $C_0$-semigroup $(T_t)_{t\geq0}$ is of type $\omega \in \R$ if 
there exists a constant  $C > 0$ such that $\norme{T_t} \leq Ce^{-\omega t}$ for all $t \geq 0$. 
Note that such $\omega$ always exists \cite[Theorem 2.2 section 1.2]{pazy}.  
It follows from the Laplace formula (see e.g. \cite[Formula (7.1) section 1.7]{pazy})
that if $-A$ generates a $C_0$-semigroup $(T_t)_{t\geq0}$ of type $\omega$, then $A$ is an
operator of half-plane type $\omega$ 
(in fact $A$ is even strong half-plane in the sense of \cite[Definition 2.1]{bat-haa}).

Throughout the rest of this subsection, we let $A$ be an operator of half-plane type $\omega$.

For any  $\alpha\leq \omega$, let 
$H^{\infty}(R_{\alpha})$ be the space of all bounded analytic functions $f :R_{\alpha}\to\C$,
equipped with the norm $\norme{f}_{H^{\infty}(R_{\alpha})} := \underset{z \in R_{\alpha} }\sup |f(z)|$.
Then $H^{\infty}(R_{\alpha})$  is a Banach algebra.

Next we consider the auxiliary space 
\[
\mathcal{E}(R_{\alpha}) := \{f \in H^{\infty}(R_{\alpha})
\,:\,\exists s>0, f(z) = O(|z|^{-(1+s)}) \text{ as } |z| \rightarrow \infty\}.
\]

Whenever $\alpha < \delta <\omega$ and $f\in \mathcal{E}(R_{\alpha})$, the integral
\begin{equation}\label{fofA}
f(A) := \frac{1}{2\pi i}\int_{-\infty}^{\infty}f(\delta+it)R(\delta+it,A)dt
\end{equation}
is absolutely convergent in $\mathcal{L}(X)$.
Further its value is independent of $\delta \in (\alpha,\omega)$. This is due
to Cauchy's theorem for vector-valued holomorphic functions.

If $f \in H^{\infty}(R_{\alpha})$, we can define a closed, densely defined, operator 
$f(A)$ by regularisation  as follows (see \cite{bat-haa} and \cite{haa1} for more details).
Let $\mu<\alpha$ and set $e(z) :=(\mu -z)^{-2}$. Then 
$e\in \mathcal{E}(R_{\alpha})$, $ef \in \mathcal{E}(R_{\alpha})$ and 
$e(A) = R(\mu,A)^2$ is injective. Then $f(A)$ is defined by 
$$
f(A) = e(A)^{-1}(ef)(A),
$$
with $Dom(f(A))$ equal to the space of all $x\in X$ such that $[(ef)(A)](x)$ belongs
to the range of $e(A)$ (= $Dom(A^2)$). 
It turns out that this definition does not depend on the choice of $\mu$.

In particular, for any $t\geq 0$, the function $z\mapsto e^{-tz}$ belongs to $ H^{\infty}(R_{\alpha})$, 
hence the above construction provides an operator 
\[
e^{-tA} := (e^{-tz})(A).
\]

The following proposition (see \cite[Proposition 2.5]{bat-haa}) gives an expected link 
between the generator of a $C_0$-semigroup and its exponential in the previous sense.

\begin{prop}\label{propexp}
The operator $-A$ is the generator of a $C_0$-semigroup $(T_t)_{t\geq0}$ if 
and only if $e^{-tA}$ is a bounded operator for all $t \in [0,1]$ and 
$\underset{t\in [0,1]}\sup\norme{e^{-tA}} < \infty$. In this case, we have 
$T_t = e^{-tA}$ for all $t\geq 0$.
\end{prop}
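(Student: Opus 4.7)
The proof splits into the two implications. For the forward direction, assume $-A$ generates $(T_t)_{t\geq 0}$. As noted just above the statement, $A$ is then of half-plane type $\omega$, so $e^{-tA}$ makes sense through the regularised calculus, and I would show $e^{-tA} = T_t$ for all $t\geq 0$; uniform boundedness on $[0,1]$ will follow for free. Fix $\mu$ with $\mathrm{Re}(\mu)<\alpha<\omega$; the standard Laplace representation of the resolvent gives
\[
R(\mu, A)x = -\int_0^\infty e^{\mu s}\, T_s x\, ds.
\]
Setting $e(z) = (\mu - z)^{-2}$, I compute $(e \cdot e^{-t\cdot})(A)$ from \eqref{fofA}: substituting the above Laplace representation for $R(\delta+is,A)$ and swapping integrals (justified by absolute convergence, thanks to the factor $(\mu-z)^{-2}$), the inner integral reduces to the inverse Laplace transform of $z \mapsto (\mu-z)^{-2}e^{-tz}$, which is a standard evaluation. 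The outcome is $(e\cdot e^{-t\cdot})(A) = R(\mu,A)^2 T_t$, so the definition $e^{-tA} = e(A)^{-1}(e \cdot e^{-t\cdot})(A)$ yields $e^{-tA} = T_t$.

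For the converse, assume $\{e^{-tA}\}_{t\in[0,1]}$ is uniformly bounded, and set $S_t := e^{-tA}$. The plan is to verify directly that $(S_t)_{t\geq 0}$ is a $C_0$-semigroup with negative generator equal to $A$. The identities $S_0 = I$ and $S_{s+t} = S_sS_t$ follow from multiplicativity of the functional calculus applied (via regularisation) to the constant function $1$ and to $e^{-sz}e^{-tz} = e^{-(s+t)z}$ respectively, while the uniform bound on $[0,1]$ propagates through the semigroup law to an exponential bound $\norme{S_t} \leq Me^{\omega_0 t}$ on $[0,\infty)$.

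The main difficulty is strong continuity at $t=0$. I plan to establish $S_tx \to x$ first on the dense subspace $Dom(A^2) = R(\mu,A)^2 X$: if $x = R(\mu,A)^2 y$, then by construction $S_tx = (e\cdot e^{-t\cdot})(A)y$, which admits the absolutely convergent contour representation \eqref{fofA}; the dominating factor $(\mu-z)^{-2}$ allows dominated convergence as $t\to 0^+$ and yields $S_tx \to R(\mu,A)^2 y = x$. Combined with uniform boundedness on $[0,1]$ and density of $Dom(A^2)$, strong continuity extends to all of $X$. Finally, to identify the negative generator $B$ of $(S_t)$, I would compute $R(\mu, B) = -\int_0^\infty e^{\mu t}S_t\,dt$ from the Laplace formula and recognise this integral, via the functional calculus applied to $z\mapsto \int_0^\infty e^{(\mu - z)t}\,dt = (z-\mu)^{-1}$, as $R(\mu, A)$; hence $B = A$, and then the already-proved forward direction gives $T_t = e^{-tB} = S_t$. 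The delicate point throughout is the rigorous justification of the functional-calculus manipulations (Fubini swaps and contour shifts), all of which ultimately rest on the half-plane type resolvent estimate.
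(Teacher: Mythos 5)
The paper does not prove this statement itself; it quotes it from \cite[Proposition 2.5]{bat-haa}, and your argument is essentially the standard proof given there: identify $(e\cdot e^{-t\cdot})(A)=e(A)T_t$ via the Laplace representation of the resolvent in the forward direction, and in the converse build the semigroup law from multiplicativity, prove strong continuity on the dense set $Dom(A^2)=e(A)X$ by dominated convergence in the contour integral, and identify the generator through the Laplace transform. The outline is correct. One technical point to make explicit in the forward direction: the Laplace representation $R(\delta+is,A)x=-\int_0^\infty e^{(\delta+is)u}T_ux\,du$ and the absolute convergence needed for Fubini require $\delta$ to lie strictly below an exponential type of $(T_t)_{t\geq 0}$, not merely $\delta<\omega$ (the half-plane type of $A$ may exceed the growth abscissa); since the value of \eqref{fofA} is independent of the abscissa and of the choice of $\mu<\alpha$, you simply choose $\mu<\alpha<\delta$ sufficiently negative, and the factor $(\mu-z)^{-2}$ alone is not what makes the double integral converge — the joint integrability also uses $e^{\delta u}\norme{T_u}\leq Ce^{(\delta-\omega')u}$ with $\delta<\omega'$. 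With that adjustment, and keeping the orientation convention of the contour consistent with $e(A)=R(\mu,A)^2$ so that the residue computation indeed yields $(e\cdot e^{-t\cdot})(A)=R(\mu,A)^2T_t$, your proof goes through; the remaining ingredients you use (density of $Dom(A^2)$, $1(A)=I$, multiplicativity for bounded values of the calculus) are standard facts about the regularised half-plane calculus.
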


\begin{df}
We say that $A$ has a bounded $H^{\infty}$-functional calculus of type $\omega$
if there exists $C>0$ such that for any $\alpha<\omega$ and for any $f \in H^{\infty}(R_{\alpha})$,
$f(A)$ belongs to $\mathcal{L}(X)$ and
\begin{equation}\label{Infini}
\norme{f(A)} \leq C\norme{f}_{H^{\infty}(R_{\omega})}.
\end{equation}
\end{df}

\begin{rq1}\label{Reduction}
The reasoning at the beginning of \cite[Section 5]{bat-haa} and the so-called convergence 
lemma (see \cite[Theorem 3.1]{bat-haa}) show that to prove
that an operator $A$ has a bounded $H^{\infty}$-functional calculus of type $\omega$,
it suffices to prove an estimate (\ref{Infini}) for any 
$f\in {\mathcal E}  (R_{\alpha})$ and any $\alpha<\omega$.
\end{rq1}

As a direct consequence of Proposition \ref{propexp}, 
we see that if  $A$ has a bounded $H^{\infty}$-functional calculus of type $\omega$, 
then $-A$ generates a $C_0$-semigroup of type $\omega$. 
The converse does not hold true, even on Hilbert space (see Section 6).

The following condition, called Gomilko-Shi-Feng condition (GFS), is important to connect
$C_0$-semigroups and functional calculi.

\begin{df}\label{GFS}
Let $m\geq 1$ be an integer. We say that 
$A$ satisfies $(GFS)_{m, \omega}$  if there exists a constant 
$C>0$ such that 
\[
\int_{\R}|\langle R(\alpha +it,A)^{m+1}x,y \rangle 
 | dt \leq \frac{C}{(\omega - \alpha)^m}\norme{x}\norme{y}
\]
for any $x \in X$ and any $y \in X^*$.
\end{df}

Gomilko \cite{gom} and Shi-Feng  \cite{shifeng} have shown 
the following two results: if $A$ has $(GFS)_{1,0}$, then 
$-A$ generates a bounded $C_0$-semigroup $(T_t)_{t\geq0}$;
conversely if $X$ is a Hilbert space, the negative generator
of a bounded  $C_0$-semigroup satisfies $(GFS)_{1,0}$. 
This is now known as the Gomilko-Shi-Feng Theorem.

\begin{rq1}\label{sechaveGFS}
It is easy to check that a sectorial operator of type $<\frac{\pi}{2}$ has $(GFS)_{1, 0}$. 
We refer e.g. to \cite{haa1} for information about sectorial operators. 
We recall that $A$ is a sectorial operator of type $< \frac{\pi}{2}$ if 
and only if $-A$ generates a bounded analytic semigroup and that in this case,
there exists a constant $C>0$ such that
$$
\forall\, \lambda\in\C\setminus\overline{R_0},\quad \norme{\lambda R(\lambda,A)}\leq C.
$$
This implies that for any $\alpha<0$ and any $t \in \R$, $\norme{R(\alpha+it,B)} \leq \frac{C}{|\alpha+it|}$. 
Hence for any $\alpha<0$ and arbitrary $x \in X$ and $y \in X^*$, we have
\begin{align*}
(-\alpha)\int_{\R}|\langle R(\alpha +it,A)^{2}x,y \rangle  | dt &  
\leq  (-\alpha)\int_{\R}\norme{R(\alpha +it,A)}^{2} dt \norme{x}\norme{y} \\
& \leq  \int_{\R} \frac{-C\alpha}{(-\alpha)^2+t^2}dt \norme{x}\norme{y} \\
 & = C\pi \norme{x}\norme{y},	
\end{align*}
which proves the result.	  
\end{rq1}

It is noticed in \cite[Lemma 5.4]{bat-haa} that for any $\alpha<\beta<\omega$
and any $f \in H^{\infty}(R_{\alpha})$, the $m$-th derivative $f^{(m)}$ of $f$
belongs to $H^{\infty}(R_{\beta})$ for any integer $m\geq 1$. Thus it makes sense to
define $f^{(m)}(A)$.

\begin{df}\label{mbounded} 
Let $m\geq 1$ be an integer.
We say that $A$ has a \textit{strong $m$-bounded functional calculus} 
of type $\omega$ if there exists a constant $C >0$ such that for each 
$\alpha < \omega$ and each $f \in H^{\infty}(R_{\alpha})$,
\[
f^{(m)}(A) \in \mathcal{L}(X) \text{    and     }	
\norme{f^{(m)}(A)} \leq \frac{C}{(\omega - \alpha)^m}\norme{f}_{H^{\infty}(R_{\alpha}) }. 
\]
\end{df}

The following remarkable results are proved in \cite[Theorem 6.4]{bat-haa}: $A$ has
$(GFS)_{m,\omega}$ if and only if $A$  has a strong $m$-bounded functional calculus 
of type $\omega$, if and only if $A$  has a strong $1$-bounded functional calculus 
of type $\omega$. (In particular, $(GFS)_{m,\omega}$  does not depend on $m$.)
Further if $A$ has a strong $m$-bounded functional calculus of type $\omega$, then $-A$ 
generates a $C_0$-semigroup of type $\omega$. It is further shown in \cite[Theorem 7.1]{bat-haa}
that if $X$ is a Hilbert space, then conversely, $A$ has a strong $m$-bounded functional calculus of type $\omega$
if $-A$ generates a $C_0$-semigroup of type $\omega$. This converse is wrong in general, see 
Section 6 for more on this.

The following is implicit in \cite{bat-haa}.

\begin{prop}\label{1implies2}
If $A$ has a bounded $H^{\infty}$-functional calculus of type $\omega$,
then $A$ has a strong $m$-bounded functional calculus of type $\omega$, for any $m\geq 1$. 
\end{prop}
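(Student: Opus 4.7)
The plan is to combine a Cauchy integral estimate for derivatives of bounded analytic functions with the hypothesis of bounded $H^{\infty}$-calculus. Fix $\alpha<\omega$ and $f\in H^{\infty}(R_{\alpha})$; the goal is to produce a constant $C'$ (independent of $\alpha$ and $f$, but possibly depending on $m$) with $\norme{f^{(m)}(A)}\leq \frac{C'}{(\omega-\alpha)^m}\norme{f}_{H^{\infty}(R_{\alpha})}$.

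The first step is a purely complex-analytic pointwise estimate of $f^{(m)}$ on $R_{\omega}$. For any $z\in R_{\omega}$ one has $\mathrm{Re}(z)>\omega$, so the closed disk centred at $z$ of radius $r=\omega-\alpha$ is contained in $R_{\alpha}$ (since for $w$ in that disk, $\mathrm{Re}(w)\geq\mathrm{Re}(z)-r>\alpha$). Cauchy's formula for the $m$-th derivative applied on the corresponding circle gives
\[
|f^{(m)}(z)|\leq \frac{m!}{(\omega-\alpha)^m}\norme{f}_{H^{\infty}(R_{\alpha})},
\]
and taking the supremum over $z\in R_{\omega}$ yields $\norme{f^{(m)}}_{H^{\infty}(R_{\omega})}\leq \frac{m!}{(\omega-\alpha)^m}\norme{f}_{H^{\infty}(R_{\alpha})}$.

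The second step is to feed $f^{(m)}$ into the bounded $H^{\infty}$-calculus. By the observation recalled just before Definition \ref{mbounded} (i.e.\ \cite[Lemma 5.4]{bat-haa}), $f^{(m)}$ lies in $H^{\infty}(R_{\beta})$ for any $\beta$ with $\alpha<\beta<\omega$. The hypothesis then supplies a constant $C$ such that $\norme{f^{(m)}(A)}\leq C\norme{f^{(m)}}_{H^{\infty}(R_{\omega})}$, and combining this with the Cauchy estimate gives
\[
\norme{f^{(m)}(A)}\leq \frac{C\,m!}{(\omega-\alpha)^m}\norme{f}_{H^{\infty}(R_{\alpha})},
\]
which is exactly the strong $m$-bounded estimate of Definition \ref{mbounded}, with constant $C\,m!$.

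There is really no obstacle here: the whole argument is the classical ``differentiation costs one power of the distance to the boundary'' principle, packaged with the calculus hypothesis. The only point one might want to be careful about is ensuring that the operator $f^{(m)}(A)$ defined by the regularisation procedure of Section 2.1 coincides with the one delivered by the bounded calculus applied to $f^{(m)}\in H^{\infty}(R_{\beta})$; but this is automatic since the bounded calculus is precisely an extension of the regularisation construction to all of $H^{\infty}(R_{\beta})$ for every $\beta<\omega$.
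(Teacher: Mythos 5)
Your argument is correct and follows essentially the same route as the paper: the paper bounds $\norme{f^{(m)}}_{H^{\infty}(R_{\omega})}$ by $\frac{K}{(\omega-\alpha)^m}\norme{f}_{H^{\infty}(R_{\alpha})}$ by citing \cite[Lemma 5.4]{bat-haa} and then applies the bounded $H^{\infty}$-calculus hypothesis, exactly as you do. The only difference is that you reprove that auxiliary estimate directly via Cauchy's integral formula (obtaining the explicit constant $m!$), and your closing remark is right: $f^{(m)}(A)$ is in both cases the operator given by the regularisation construction, so no compatibility issue arises.
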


\begin{proof}
According to \cite[Lemma 5.4]{bat-haa}, there is a constant 
$K>0$, such that for any
$\alpha<\omega$ and any $f\in H^{\infty}({R_{\alpha}})$, we have
$\norme{f^{(m)}}_{H^{\infty}(R_{\omega})} \leq  
\frac{K}{(\omega-\alpha)^m}\norme{f}_{H^{\infty}(R_{\alpha})}$.

Assume that $A$ has a bounded $H^{\infty}$-functional calculus of type $\omega$.
Then for any $f$ as above, $f^{(m)}(A)\in{\mathcal L}(X)$ and we have
\[
\norme{f^{(m)}(A)} \leq C\norme{f^{(m)}}_{H^{\infty}(R_{\omega})} 
\leq \frac{KC}{(\omega-\alpha)^m}\norme{f}_{H^{\infty}(R_{\alpha})}. 
\]
This proves that $A$ has a strong $m$-bounded functional calculus of type $\omega$.
\end{proof}

\subsection{The Sun Dual of a $C_0$-Semigroup}\label{Sun}
In this subsection we collect a few facts from \cite{ner}
which are useful when dealing with non reflexive Banach
spaces.

Let $X$ be a Banach space and let $Z\subset X^*$ be a closed subspace. We say that 
$Z$ is norming if there exists a constant $c>0$ such that for any $x\in X$,
$$
c\norme{x}\leq \sup\bigl\{\vert \langle x,y\rangle\vert\,:\, y\in Z,\ \norme{y}\leq 1\bigr\}.
$$

Let $(T_t)_{t\geq 0}$ be a $C_0$-semigroup on $X$, with generator $-A$. It may happen that
the dual semigroup $(T_t^*)_{t\geq 0}$ is not strongly continuous on $X^*$.
We denote by
$X^{\odot}$ (pronounced $X$-sun) the set 
\begin{equation}\label{Xsun}
X^{\odot} := \{x\in X^*, \quad \norme{T^*_t x-x} \underset{t \rightarrow 0}\longrightarrow 0 \}.
\end{equation}
This set  trivially satisfies $T^*_t(X^{\odot}) \subset X^{\odot}$ for every $t \geq0$. 
Moreover $X^{\odot}$ is a closed and weak*-dense
subspace of $X^*$. Indeed we have 
$X^{\odot} = \overline{D(A^*)}$.

We let $T^{\odot}_t$ denote  the restriction  of $T^*_t$ to $X^{\odot}$. Then by definition, 
$(T^{\odot}_t)_{t\geq 0}$ is a $C_0$-semigroup on $X^{\odot}$. 
It is called the \textit{sun dual of $(T_t)_{t\geq 0}$}. Let $A^{\odot}$ be its negative generator. 
Then \textit{$A^{\odot}$ is the part of $A^*$ in $X^{\odot}$} (see \cite[page 6]{haa1} for definition).

We will use the following two results.

\begin{thm}\label{thmsun}
Let $(T_t)_{t\geq 0}$ be a $C_0$-semigroup on $X$, with generator $-A$.
\begin{itemize}
\item [(1)]
We have $\rho(A) = \rho(A^*) = \rho(A^{\odot})$ and $R(\lambda, A)^*y = 
R(\lambda, A^*)y = R(\lambda,A^{\odot})y$ for all $\lambda \in \rho(A) $ and
for any $y \in X^{\odot}$.
\item [(2)]
The space $X^{\odot}$ is norming.
\end{itemize}
\end{thm}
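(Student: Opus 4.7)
The plan is to handle the two assertions in order, noting that the reverse inclusion in (1) will rely on the norming property established in (2).

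For (1), the equality $\rho(A) = \rho(A^*)$ together with $R(\lambda, A)^* = R(\lambda, A^*)$ is a standard duality fact for closed densely defined operators: $(\lambda I - A)^* = \lambda I - A^*$, and inversion commutes with adjunction when both are bounded. Next, for $\lambda \in \rho(A^*)$ and $y \in X^{\odot}$, I would verify that $R(\lambda, A^*) y$ still lies in $X^{\odot}$. Taking adjoints in the identity $T_t R(\lambda, A) = R(\lambda, A) T_t$ shows that $R(\lambda, A^*)$ commutes with $T_t^*$, so
\[
\norme{T_t^* R(\lambda, A^*) y - R(\lambda, A^*) y} = \norme{R(\lambda, A^*)(T_t^* y - y)} \longrightarrow 0 \quad \text{as } t \to 0.
\]
Since $A^{\odot}$ is by definition the part of $A^*$ in $X^{\odot}$, a direct unpacking of that definition shows that $R(\lambda, A^*)|_{X^{\odot}}$ is a two-sided inverse of $\lambda I - A^{\odot}$, which yields $\lambda \in \rho(A^{\odot})$ and the stated resolvent identity.

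For (2), I would produce approximants of elements of $X^*$ inside $X^{\odot}$ by time-averaging. Given $y \in X^*$, define $y_s \in X^*$ in the weak-$*$ sense by
\[
\langle x, y_s \rangle := \frac{1}{s}\int_0^s \langle T_t x, y\rangle\, dt, \qquad x \in X.
\]
Writing $M := \underset{t \in [0, 1]}\sup \norme{T_t}$, one has $\norme{y_s} \leq M \norme{y}$ for $s \in (0, 1]$. A short computation gives
\[
\langle x, T_h^* y_s - y_s\rangle = \frac{1}{s}\int_s^{s+h} \langle T_t x, y\rangle\, dt - \frac{1}{s}\int_0^h \langle T_t x, y\rangle\, dt,
\]
so $\norme{T_h^* y_s - y_s} \leq 2 h M \norme{y}/s \to 0$ as $h \to 0^+$, whence $y_s \in X^{\odot}$. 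By strong continuity of $(T_t)_{t \geq 0}$ we also have $\langle x, y_s\rangle \to \langle x, y\rangle$ as $s \to 0^+$. Choosing $y \in X^*$ of unit norm with $\langle x, y\rangle = \norme{x}$ (Hahn-Banach) and renormalising $y_s$, the norming property of $X^{\odot}$ with constant $c = 1/M$ follows.

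The main obstacle is the reverse inclusion $\rho(A^{\odot}) \subset \rho(A^*)$ in (1). The forward direction and the resolvent formula fall out from the definition of the part of an operator, but the reverse requires building a bounded inverse of $\lambda I - A^*$ on \emph{all} of $X^*$ starting only from a bounded inverse of $\lambda I - A^{\odot}$ on $X^{\odot}$. I would exploit the fact that, for $\mathrm{Re}(\lambda)$ larger than the type of the semigroup, both resolvents coincide through the Laplace representation $R(\lambda, A^*) y = \int_0^{\infty} e^{-\lambda t} T_t^* y\, dt$ (interpreted weak-$*$ on $X^*$, Bochner on $X^{\odot}$), and then propagate the equality along the connected component of $\rho(A^{\odot})$ by analytic continuation, using the norming property from (2) to extract a bounded $X^*$-valued extension of the $X^{\odot}$-valued resolvent.
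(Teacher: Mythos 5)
The paper itself offers no proof of this theorem (it is quoted from \cite{ner}), so the only issue is whether your argument is complete. Your proof of (2) by Ces\`aro averaging is correct, and in (1) the chain $\rho(A)=\rho(A^*)$, the invariance of $X^{\odot}$ under $R(\lambda,A^*)$, and the identification of $R(\lambda,A^*)|_{X^{\odot}}$ as a two-sided inverse of $\lambda-A^{\odot}$ are all fine; this yields $\rho(A^*)\subset\rho(A^{\odot})$ together with the stated resolvent identities.

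The gap is exactly where you flag it, and the route you propose does not close it. Analytic continuation can only propagate the identity between $R(\cdot,A^*)|_{X^{\odot}}$ and $R(\cdot,A^{\odot})$ inside the connected component of $\rho(A^{\odot})$ containing the half-plane on which the Laplace representation holds (with the paper's convention that $-A$ generates $(T_t)_{t\geq0}$, this is a \emph{left} half-plane $\{Re(\lambda)<\omega\}$, not $Re(\lambda)$ large); since $\rho(A^{\odot})$ need not be connected, this cannot give the full inclusion $\rho(A^{\odot})\subset\rho(A^*)$ claimed by the theorem. Moreover, even on that component, the step ``extract a bounded $X^*$-valued extension of the $X^{\odot}$-valued resolvent'' is precisely the assertion to be proved and no mechanism is offered; one would need, e.g., that $\lambda\mapsto R(\lambda,A^{\odot})^*j(x)$ is analytic with values in $X^{\odot*}$ and remains in the closed subspace $j(X)$ (compose with the quotient map), plus the blow-up of the resolvent norm near the spectrum, to conclude. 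A much shorter argument uses (2) pointwise and avoids continuation entirely: let $\lambda\in\rho(A^{\odot})$. For $x\in Dom(A)$ and $y\in X^{\odot}$ one has $\langle x,y\rangle=\langle(\lambda-A)x,R(\lambda,A^{\odot})y\rangle$ because $R(\lambda,A^{\odot})y\in Dom(A^*)$ and $(\lambda-A^*)R(\lambda,A^{\odot})y=y$; by the norming property this gives $\norme{x}\leq M\norme{R(\lambda,A^{\odot})}\,\norme{(\lambda-A)x}$, so $\lambda-A$ is injective with closed range. If $y\in X^*$ annihilates the range, then $y\in Dom(A^*)\subset X^{\odot}$ and $A^*y=\lambda y\in X^{\odot}$, hence $(\lambda-A^{\odot})y=0$ and $y=0$, so the range is dense; therefore $\lambda\in\rho(A)=\rho(A^*)$. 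I recommend replacing your continuation sketch by this argument (or simply citing \cite{ner}, as the paper does).
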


About (2), we note that more precisely, if we define
\[
\norme{x}' := \sup\big\{|\langle x,x^{\odot}\rangle|, \, x^{\odot} \in X^{\odot}, \, \norme{x^{\odot}} \leq 1 \big\},
\]
and if we let $M := \varlimsup_{t \rightarrow 0} \norme{T(t)}$, then we have
\[
\norme{x}' \leq \norme{x} \leq M\norme{x}', \qquad x \in X.
\]

\subsection{$\gamma$-boundedness on Banach spaces.}  
In recent years, $\gamma$-boundedness and $\mathcal{R}$-boundedness played an 
important role in the operator valued harmonic analysis, multiplier theory and functional calculi 
(see \cite{hnvw} for more details). Throughout $X,Y$ denote arbitrary Banach spaces and 
we let $(\gamma_n)_{n \geq 1}$ be a sequence of independent 
complex valued standard Gaussian variables on some probability space $\Sigma$. 
We denote by $G(X)$ the closure of 
\[
\big\{\sum_{k=1}^n \gamma_k \otimes x_k\, :\, x_k \in X, \, n\in \N \big\}
\]
in $L^2(\Sigma,X)$. For $x_1,\ldots, x_n \in X$, we let 
\[
\norme{\sum_{k=1}^n \gamma_k \otimes x_k}_{G(X)} := 
\bigg(\underset{\Sigma}\bigint{\norme{\sum_{k=1}^n \gamma_k(\lambda)x_k}^2d\lambda}\bigg)^{\frac{1}{2}}
\] 
denote the induced norm.

\begin{df}
Let $\mathcal{T} \subset \mathcal{L}(X,Y)$ be a set of operators. We say that
$\mathcal{T}$ is $\gamma$-bounded if
there exists a constant $C \geq 0 $ such that for all finite sequences 
$(T_n)_{n=1}^{N} \subset \mathcal{T}$ and $(x_n)_{n=1}^{N} \subset X$, the following inequality holds:
\begin{equation}\label{Rboundedness}
\norme{\sum_{n=1}^N \gamma_n\otimes T_nx_n}_{G(Y)} \leq C\norme{\sum_{n=1}^N \gamma_n\otimes x_n}_{G(X)}.
\end{equation}
The least admissible constant in the above inequality is called the 
$\gamma$-bound of  $\mathcal{T}$ and we denote this quantity by $\gamma(\mathcal{T})$. 
If $\mathcal{T}$ fails to be $\gamma$-bounded, we set $ \gamma(\mathcal{T}) = \infty$. 
\end{df}

Replacing the sequence $(\gamma_k)_{k\geq 1}$ by a sequence of independent Rademacher 
variables $(\epsilon_k)_{k\geq1}$ in the above definition, we obtain the definition of 
$\mathcal{R}$-boundedness. It is well known that any $\mathcal{R}$-bounded set is $\gamma$-bounded, and that 
these notions are equivalent when $X$ has finite cotype (see \cite[theorem 8.6.4]{hnvw} for a more general result). 
Furthermore if $X$ has cotype $2$ and $Y$ has type 2 (especially when $X=Y$ is an Hilbert space) 
then $\mathcal{R}$-boundedness, $\gamma$-boundedness and uniform boundedness are equivalent.  

Assume that $X$ and $Y$ are Banach lattices with finite cotype. 
By the Khintchine-Maurey inequality \cite [Theorem 7.2.13]{hnvw}, 
there exist $c,C>0$ such that for every $x_1, \ldots, x_n \in X$,
\[
c\norme{\big(\sum_{n=1}^N |x_n|^2\big)^{\frac{1}{2}}}_{X} \leq 	
\norme{\sum_{n=1}^N \gamma_n\otimes x_n}_{G(X)} \leq C
\norme{\big(\sum_{n=1}^N |x_n|^2\big)^{\frac{1}{2}}}_{X},
\]
and $Y$ satisfies a similar property.
Hence a set $\mathcal{T} \subset \mathcal{L}(X,Y)$ is $\gamma$-bounded if 
and only if there exists $C\geq0$ such that for all finite sequences 
$(T_n)_{n=1}^N  \subset \mathcal{T}$ and $(x_n)_{n=1}^N \subset X$,
\begin{equation}\label{Rboundedness2}
\norme{\big(\sum_{n=1}^{N}|T_nx_n|^2\big)^{\frac{1}{2}}}_Y 
\leq C\norme{\big(\sum_{n=1}^{N}|x_n|^2\big)^{\frac{1}{2}}}_X.
\end{equation}

We recall for further use that $\gamma$-boundedness is stable 
under the strong operator topology.  

\begin{prop}\label{R-summarizes}
Let $\mathcal{T} \subset \mathcal{L}(X,Y)$ be  a $\gamma$-bounded set. Then
 the closure $\overline{\mathcal{T}}^{so}$  of $\mathcal{T}$ in the 
strong operator topology is $\gamma$-bounded with 
$\gamma(\overline{\mathcal{T}}^{so}) = \gamma(\mathcal{T}) $.
\end{prop}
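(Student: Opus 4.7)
The plan is to prove the non-trivial inequality $\gamma(\overline{\mathcal{T}}^{so}) \leq \gamma(\mathcal{T})$ by a direct approximation argument; the reverse inequality is immediate since $\mathcal{T} \subset \overline{\mathcal{T}}^{so}$.

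First I would fix finite sequences $(S_n)_{n=1}^N \subset \overline{\mathcal{T}}^{so}$ and $(x_n)_{n=1}^N \subset X$, together with an arbitrary $\varepsilon >0$. Since each $S_n$ lies in the strong operator closure of $\mathcal{T}$, I can select $T_n \in \mathcal{T}$ such that $\norme{T_n x_n - S_n x_n} < \varepsilon/N$ for every $n=1,\ldots,N$ (crucially, this is a pointwise approximation at the single vector $x_n$, so there is no issue in choosing each $T_n$ independently).

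Next I would apply the defining inequality $(\ref{Rboundedness})$ of $\gamma$-boundedness to the sequence $(T_n)_{n=1}^N$:
\[
\norme{\sum_{n=1}^N \gamma_n \otimes T_n x_n}_{G(Y)} \leq \gamma(\mathcal{T}) \norme{\sum_{n=1}^N \gamma_n \otimes x_n}_{G(X)}.
\]
Then, using the triangle inequality in $G(Y)$ together with the fact that each $\gamma_n$ has $L^2$-norm equal to $1$, I would estimate
\[
\norme{\sum_{n=1}^N \gamma_n \otimes (S_n x_n - T_n x_n)}_{G(Y)} \leq \sum_{n=1}^N \norme{S_n x_n - T_n x_n} < \varepsilon.
\]
Combining the two displays yields
\[
\norme{\sum_{n=1}^N \gamma_n \otimes S_n x_n}_{G(Y)} \leq \gamma(\mathcal{T}) \norme{\sum_{n=1}^N \gamma_n \otimes x_n}_{G(X)} + \varepsilon.
\]
Letting $\varepsilon \to 0$ gives the desired inequality $(\ref{Rboundedness})$ for $\overline{\mathcal{T}}^{so}$ with constant $\gamma(\mathcal{T})$, so $\overline{\mathcal{T}}^{so}$ is $\gamma$-bounded and $\gamma(\overline{\mathcal{T}}^{so}) \leq \gamma(\mathcal{T})$.

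There is no serious obstacle here: the argument is just the standard observation that $\gamma$-boundedness is a condition on finite sequences of vectors, evaluated pointwise, and such conditions pass to strong operator limits via crude triangle-inequality estimates. The only point to be slightly careful about is that the approximants $T_n$ can be chosen independently for each $n$ (which is fine, since we only need to approximate $S_n$ at the single vector $x_n$), so no simultaneous net on the index set $\{1,\ldots,N\}$ is required.
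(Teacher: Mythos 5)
Your proof is correct, and it is the standard approximation argument: approximate each $S_n$ at the single vector $x_n$ by some $T_n\in\mathcal{T}$, use the triangle inequality in $G(Y)$ (with $\norme{\gamma_n\otimes y}_{G(Y)}=\norme{y}$), and let $\varepsilon\to 0$; the reverse inequality is trivial from $\mathcal{T}\subset\overline{\mathcal{T}}^{so}$. The paper states this proposition as a recalled fact without giving a proof, and the reference it relies on (\cite{hnvw}) proves it in essentially the same way, so there is nothing to add.
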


We will need the following lemma, for which we refer to \cite[Theorem 8.5.4]{hnvw}.

\begin{lem}[$L^{\infty}$-integral means]\label{Linftymeans}
Let $(\Omega,\mu)$ be a measure space and let $F : \Omega \rightarrow \mathcal{L}(X,Y)$
be an operator-valued function. Assume that $F(\cdot)x$ belongs to $L^1(\Omega;Y)$
for any $x\in X$ and that there exists a constant $K>0$ such that 
$$
\norme{F(\cdot)x}_{L^1(\Omega;X)} \leq K\norme{x},\quad x\in X.
$$
Then, for any $\phi \in L^{\infty}(\Omega)$, we can define a 
bounded operator $T^F_{\phi} \in \mathcal{L}(X,Y)$ by 
\[
T^F_{\phi}x = \int_{\Omega}\phi(s)F(s)xd\mu(s), \quad x\in X,
\]
and for any $C>0$, the set 
\[
\mathcal{T}^F_{\infty} := \big\{T^F_{\phi}\, :\, \phi \in L^{\infty}(\Omega), \, 
\norme{\phi}_{L^{\infty}(\Omega)} \leq C \big \}
\]
is $\gamma$-bounded.
\end{lem}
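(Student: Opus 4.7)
The plan is to first verify that each $T^F_\phi$ is bounded, then to establish the $\gamma$-bound for arbitrary finite subfamilies of $\mathcal{T}^F_\infty$. The boundedness is immediate from the hypothesis: for $x\in X$ the integrand $s\mapsto\phi(s)F(s)x$ is Bochner integrable in $Y$ and
\[
\norme{T^F_\phi x}_Y\leq\norme{\phi}_{L^\infty(\Omega)}\norme{F(\cdot)x}_{L^1(\Omega;Y)}\leq KC\norme{x},
\]
so that $T^F_\phi\in\mathcal{L}(X,Y)$.

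To bound the $\gamma$-norm, I will fix $(\phi_n)_{n=1}^N$ with $\norme{\phi_n}_{L^\infty(\Omega)}\leq C$ and $(x_n)_{n=1}^N\subset X$, and set $Z(\omega):=\sum_{n=1}^N\gamma_n(\omega) x_n$. Linearity and Fubini give
\[
\sum_{n=1}^N \gamma_n(\omega) T^F_{\phi_n} x_n = \int_\Omega F(s)\Big(\sum_{n=1}^N \gamma_n(\omega)\phi_n(s) x_n\Big)d\mu(s).
\]
Taking the $L^2(\Sigma;Y)$-norm of each side, Minkowski's integral inequality moves the norm inside the $\Omega$-integral; for each fixed $s$ the Gaussian contraction principle (which, by rotational invariance of the complex Gaussians $\gamma_n$, applies to the complex weights $\phi_n(s)$ with constant $\max_n|\phi_n(s)|\leq C$) then yields
\[
\Big\|\sum_n \gamma_n T^F_{\phi_n}x_n\Big\|_{G(Y)} \leq C \int_\Omega \big(\mathbb{E}\norme{F(s)Z(\omega)}_Y^2\big)^{1/2}d\mu(s).
\]

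The decisive step converts the inner $L^2$-moment into an $L^1$-moment of the Gaussian $Y$-valued random variable $F(s)Z=\sum_n\gamma_n F(s)x_n$: by the Kahane--Khintchine inequality for Gaussian vectors there is a universal constant $\kappa$ with $(\mathbb{E}\norme{F(s)Z}_Y^2)^{1/2}\leq\kappa\,\mathbb{E}\norme{F(s)Z(\omega)}_Y$ for each $s$. Integrating in $s$, Fubini together with the hypothesis $\int_\Omega\norme{F(s)z}d\mu(s)\leq K\norme{z}$ applied pointwise in $\omega$ to $z=Z(\omega)$ bound the right-hand side by $C\kappa K\,\mathbb{E}\norme{Z(\omega)}$. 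A final application of Jensen's inequality gives $\mathbb{E}\norme{Z}\leq(\mathbb{E}\norme{Z}^2)^{1/2}=\norme{\sum_n\gamma_n x_n}_{G(X)}$, producing the desired bound $\gamma(\mathcal{T}^F_\infty)\leq C\kappa K$.

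The main difficulty I anticipate is precisely this last passage from the $L^2$-moment to the $L^1$-moment of $F(s)Z$: a naive Minkowski swap would invert the inequality in the wrong direction, and it is crucial to exploit the Gaussian character of $F(s)Z$ via Kahane--Khintchine. The measurability and integrability needed for the Minkowski and Fubini swaps follow routinely from the fact that $Z$ takes values in the finite-dimensional span of $x_1,\dots,x_N$ and each $F(\cdot)x_n$ is strongly measurable by hypothesis.
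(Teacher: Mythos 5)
Your proof is correct: the randomization identity, pointwise Gaussian contraction principle, Fubini with the $L^1$-hypothesis, and the Kahane--Khintchine passage between first and second moments are exactly the standard argument behind this lemma. The paper itself offers no proof but cites \cite[Theorem 8.5.4]{hnvw}, and your argument is essentially the one used there, so there is nothing to add beyond noting that your constant $C\kappa K$ is perfectly adequate for the statement.
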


The adjoint set of a $\gamma$-bounded set may not be $\gamma$-bounded 
(\cite[example 8.4.2]{hnvw}). Following \cite{kal-wei2}, we introduce 
weaker notions to circumvent this difficulty.


\begin{df} \ 
\begin{enumerate}[label = \arabic*)]
\item Let $\mathcal{T} \subset \mathcal{L}(X,Y)$ be a set of operators. 
We say that $\mathcal{T}$ is weak $\gamma$-bounded ($W\gamma$-bounded in short)
if there exists $C$ such that 
for all finite sequences $(T_n)_{n=1}^{N} \subset \mathcal{T}$, 
$(x_n)_{n=1}^{N} \subset X$ and $(y_n^*)_{n=1}^{N} \subset Y^*$, the following inequality holds:
\begin{equation}\label{Wgambounded}
\sum_{n=1}^N |\langle T_nx_n,y_n^* \rangle|\leq C\norme{\sum_{n=1}^N 
\gamma_n\otimes x_n}_{G(X)}\norme{\sum_{n=1}^N \gamma_n\otimes y_n^*}_{G(Y^*)}.
\end{equation}

\item 
Let $\mathcal{T} \subset \mathcal{L}(X,Y^*)$ be a set of operators. We say that $\mathcal{T}$ is 
weak$^*$ $\gamma$-bounded ($W^*\gamma$-bounded in short) 
if there exists $C$ such that for all 
finite sequences $(T_n)_{n=1}^{N} \subset \mathcal{T}$, $(x_n)_{n=1}^{N} 
\subset X$ and $(y_n)_{n=1}^{N} \subset Y$, the following inequality holds:
\begin{equation}\label{W*gambounded}
\sum_{n=1}^N |\langle T_nx_n,y_n \rangle|\leq 
C\norme{\sum_{n=1}^N \gamma_n\otimes x_n}_{G(X)}
\norme{\sum_{n=1}^N \gamma_n\otimes y_n}_{G(Y)}.
\end{equation}
\end{enumerate}
\end{df}

If $\mathcal{T}$ is $W\gamma$-bounded (respectively $W^*\gamma$-bounded) 
then the adjoint set $\mathcal{T}^*$ is $W\gamma$-bounded  
(respectively $W^*\gamma$-bounded) (see \cite[Lemma 2.4]{hof-kal-kuc}). It is 
clear that $\gamma$-boundedness implies $W\gamma$-boundedness, however the converse is false in general. 
Indeed take a $\gamma$-bounded set $\mathcal{T}$ 
such that $\mathcal{T}^*$ is not $\gamma$-bounded, then 
as $\mathcal{T}$ is $W\gamma$-bounded, $\mathcal{T}^*$ 
is also $W\gamma$-bounded but not $\gamma$-bounded.

\begin{lem}[weak $L^1$-integral means]\label{WeakL1means}
Let $(\Omega,\mu)$ be a measure space, let $\mathcal{T}\subset \mathcal{L}(X,Y^*)$ be 
a $W^*\gamma$-bounded set and let $G : \Omega \longrightarrow \mathcal{L}(X,Y^*)$ be
an operator-valued function such that 
$G$ takes values in $\mathcal{T}$ and for all $x \in X$ and $y \in Y$, 
the scalar function $s \longmapsto \langle G(s)x,y \rangle $ is measurable.
 
Then for any $\phi \in L^1(\Omega)$, one can define a bounded operator $T_G^{\phi} \in \mathcal{L}(X,Y^*)$ by 
\[
\langle T_G^{\phi}x,y \rangle := \int_{\Omega}\phi(s) \langle G(s)x,y 
\rangle d\mu(s),\quad \text{for $x \in X$ and $y\in Y$},
\]
and for any $C>0$, the set
\[
\Big\{ T_G^{\phi}\, :\, \phi \in L^1(\Omega),\; 
\norme{\phi}_{L^1(\Omega)} \leq C \Big\} \subset \mathcal{L}(X,Y^{*}) 
\]
is $W^*\gamma$-bounded.		 
\end{lem}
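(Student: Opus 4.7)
The proof decomposes into two tasks: verifying that $T_G^{\phi}$ is a bounded operator, and establishing the $W^*\gamma$-boundedness of the family $\{T_G^{\phi}: \norme{\phi}_{L^1} \leq C\}$. The strategy for the second task is to identify each $T_G^{\phi}$ as an element of $\norme{\phi}_{L^1}$ times the weak$^*$-closed absolutely convex hull $\overline{\operatorname{aco}(\mathcal{T})}$ of $\mathcal{T}$, then to show that this hull inherits the $W^*\gamma$-bound of $\mathcal{T}$.

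For well-definedness, taking $N=1$ in the $W^*\gamma$-bound of $\mathcal{T}$ (and noting $\norme{\gamma_1\otimes x}_{G(X)}=\norme{x}$) yields the uniform pointwise estimate $|\langle G(s)x,y\rangle|\leq M\norme{x}\norme{y}$ with $M:=W^*\gamma(\mathcal{T})$. Hence the integrand $\phi(s)\langle G(s)x,y\rangle$ is dominated by $M|\phi(s)|\norme{x}\norme{y}$, and the bilinear form has norm at most $M\norme{\phi}_{L^1}$, yielding $T_G^{\phi}\in\mathcal{L}(X,Y^*)$. For the identification, write $\phi(s)=\alpha(s)|\phi(s)|$ with $|\alpha(s)|=1$; then $T_G^{\phi}/\norme{\phi}_{L^1}$ is the weak$^*$ barycenter of $\alpha(s)G(s)$ against the probability measure $|\phi|\,d\mu/\norme{\phi}_{L^1}$. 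Approximation of this scalar integral by finite Riemann sums, applied simultaneously to the finitely many test pairs $(x_l,y_l)$ under consideration, exhibits this barycenter as an element of $\overline{\operatorname{aco}(\mathcal{T})}$.

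The crux is showing that $\overline{\operatorname{aco}(\mathcal{T})}$ remains $W^*\gamma$-bounded with constant $\leq M$. Given a finite family $T_j=\sum_k\lambda_{j,k}S_{j,k}$ with $\sum_k|\lambda_{j,k}|\leq 1$ and $S_{j,k}\in\mathcal{T}$, I use the identity
\[
|\lambda_{j,k}\,\langle S_{j,k}x_j,y_j\rangle|=\bigl|\langle S_{j,k}(\sqrt{|\lambda_{j,k}|}\,x_j),\,\sqrt{|\lambda_{j,k}|}\,y_j\rangle\bigr|
\]
and apply the $W^*\gamma$-bound of $\mathcal{T}$ to the doubly indexed family $(S_{j,k})$ with vectors $(\sqrt{|\lambda_{j,k}|}\,x_j)$ and $(\sqrt{|\lambda_{j,k}|}\,y_j)$. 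The resulting Gaussian norm $\norme{\sum_{j,k}\gamma_{j,k}\sqrt{|\lambda_{j,k}|}\,x_j}_{G(X)}$ is controlled by $\norme{\sum_j\gamma_jx_j}_{G(X)}$ via the Gaussian contraction principle: for each $j$ the inner combination $\sum_k\gamma_{j,k}\sqrt{|\lambda_{j,k}|}$ is a centered Gaussian of variance $\sum_k|\lambda_{j,k}|\leq 1$, so it equals $a_j\tilde{\gamma}_j$ in law with $a_j\leq 1$ and $(\tilde{\gamma}_j)$ independent standard Gaussians, and analogously on the $y$-side. Passage to the weak$^*$-closure preserves the inequality by a standard limiting argument. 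Finally, since $T_G^{\phi_j}/C\in\overline{\operatorname{aco}(\mathcal{T})}$ whenever $\norme{\phi_j}_{L^1}\leq C$ (the hull is stable under multiplication by scalars of modulus $\leq 1$), applying the hull-bound and rescaling by $C$ yields $\sum_j|\langle T_G^{\phi_j}x_j,y_j\rangle|\leq CM\norme{\sum_j\gamma_jx_j}_{G(X)}\norme{\sum_j\gamma_jy_j}_{G(Y)}$.

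The main obstacle lies in the absolutely convex hull step, where the regrouping of the doubly indexed Gaussian sum into an iid-standard expression of variance $\leq 1$ must be carried out carefully so that Gaussian contraction produces no parasitic factor; a secondary technicality is the simultaneous approximation of the barycenter by elements of $\operatorname{aco}(\mathcal{T})$ in the weak$^*$ topology, which must be compatible with the merely scalar-valued measurability hypothesis on $G$.
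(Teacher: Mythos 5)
Your proposal is correct and follows essentially the same route as the paper: the paper's proof consists precisely of noting that the absolutely convex hull and the weak$^*$ operator topology closure of a $W^*\gamma$-bounded set remain $W^*\gamma$-bounded, and then placing $T_G^{\phi}/\norme{\phi}_{L^1}$ in that closed hull by mimicking the $L^1$-integral means argument of \cite[Theorem 8.5.2]{hnvw}. You simply supply the details the paper leaves to the reader (the square-root/double-indexing plus Gaussian contraction argument for the hull, and the simultaneous finite approximation for the closure), and these are sound.
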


\begin{proof}
Let $\sigma$ be the topology on $\mathcal{L}(X,Y^*)$ generated by the family of seminorms 
$\rho_{x,y}$ defined by $\rho_{x,y}(T) = |\langle T(x),y \rangle |$, for any $x\in X$ and $y\in Y$. 
The topology $\sigma$ is called \textit{weak$^*$ operator topology on} $\mathcal{L}(X,Y^*)$. 
It is clear that if  $\mathcal{T} \subset \mathcal{L}(X,Y^*)$ is $W^*\gamma$-bounded, 
then its closure $\overline{\mathcal{T}}^{\sigma}$ is also $W^*\gamma$-bounded. 
Moreover if $\mathcal{T} \subset \mathcal{L}(X,Y^*)$ is 
$W^*\gamma$-bounded then the absolute convex hull of $\mathcal{T}$ is $W^*\gamma$-bounded
as well. 

With these two facts in hand, 
one can obtain the result by mimicking the proof of \cite[Theorem 8.5.2]{hnvw}. 
Details are left to the reader.
\end{proof}

We will need the notion of $K$-convexity, 
for which we refer to \cite{mau1} or \cite{hnvw}. 
We recall that a Banach space $X$ is 
$K$-convex if and only if there exists a constant
$K > 0$ such that for all $x_1,\ldots,x_N \in X$ the following inequality holds:
\begin{equation}\label{inekconv}
\norme{\sum_{n=1}^N \gamma_n \otimes x_n}_{G(X)} \leq K \sup \Big\{\big|\sum_{n=1}^N \langle x_n,y_n\rangle\big|
\,:\, y_1,\ldots, y_N\in X^*,\ \norme{\sum_{n=1}^N \gamma_n \otimes y_n}_{G(X^*)} \leq 1 \Big\}.
\end{equation}
It turns out that $X$ is $K$-convex if and only if $X^*$ is $K$-convex. If this is the case, then
according to \cite[Corollary 7.4.6]{hnvw}, there exists a constant
$K' > 0$ such that for all $y_1,\ldots,y_n \in X^*$,
\begin{equation}\label{inekconv*}
\norme{\sum_{n=1}^N \gamma_n \otimes y_n}_{G(X^*)} \leq K' \sup 
\Big\{\big|\sum_{n=1}^N \langle x_n,y_n\rangle\big|\, :\,
x_1,\ldots,x_N\in X,\ \norme{\sum_{n=1}^N \gamma_n \otimes x_n}_{G(X)} \leq 1 \Big\}.
\end{equation}
We recall that all UMD spaces are $K$-convex. In particular, $L^p$-spaces are 
$K$-convex for any $1<p < \infty$. Further any closed subspace of a 
$K$-convex space is $K$-convex. 
We also recall that any $K$-convex 
space has a finite cotype. In particular, a $K$-convex Banach space
cannot contain $c_0$. A fundamental result on $K$-convexity is 
Pisier's Theorem \cite[Theorem 7.4.23]{hnvw}
which asserts that $X$ is $K$-convex if and only if $X$ has non-trivial type. 

We note that
there exist non reflexive $K$-convex Banach spaces.
It readily follows from \eqref{inekconv}
that if $Y$ is a $K$-convex space, then a set $\mathcal{T} \subset \mathcal{L}(X,Y)$ is 
$W\gamma$-bounded if and only if it is $\gamma$-bounded. Likewise using \eqref{inekconv*},
we obtain that if $Y$ is $K$-convex, then a set $\mathcal{T} \subset \mathcal{L}(X,Y^*)$ 
is $W^*\gamma$-bounded if and only if it is $\gamma$-bounded.


We now turn to the definition of $\gamma$-spaces, which play a fundamental role in this paper.
Let $H$ be a Hilbert space.
A linear operator $T : H \rightarrow X$ is called $\gamma$-summing if 
\[
\norme{T}_{\gamma} := \sup \norme{\sum_{n=1}^{N}\gamma_n\otimes Th_n }_{G(X)} < \infty,
\]
where the supremum is taken over all finite orthonormal system $\{h_1,...,h_n \}$ in $H$. 
We let $\gamma_{\infty}(H;X)$ denote the space of all $\gamma$-summing operator and 
we endow it with the norm $\norme{\cdot}_{\gamma}$. Then
$\gamma_{\infty}(H;X)$  is a Banach space. Clearly any finite rank (bounded) operator 
is a $\gamma$-summing operator. We let $\gamma(H;X)$ be
the closure in $\gamma_{\infty}(H;X)$ of the space of finite rank operators from $H$ into $X$. 
The spaces $\gamma_{\infty}(H;X)$ and $\gamma(H;X)$ 
do not coincide in general \cite[Example 9.1.21]{hnvw} but when $X$ does not contain a copy 
of $c_0$ (in particular when $X$ is $K$-convex) then these spaces coincide.

Let $(S,\mu)$ be a measure space. We say that a function 
$f : S \rightarrow X$ is weakly $L^2$ if for each $x^* \in X^*$,  
the function $s \mapsto \langle f(s),x^* \rangle $ is measurable and belongs to $L^2(S)$. 
If $f : S \rightarrow X$ is measurable and weakly $L^2$, one can define an opertor $\mathbb{I}_f : L^2(S)\to X$, 
given by
\[
\mathbb{I}_f(g) := \int_S g(s)f(s)ds , \quad g\in L^2(S),
\] 
where this integral is defined in the Pettis sense.

We let $\gamma(S;X)$ be the space of all measurable and weakly $L^2$
functions $f : S \rightarrow X$ such that $\mathbb{I}_f$
belongs to $\gamma(L^2(S);X)$. We endow it with $\norme{f}_{\gamma(S;X)}:= 
\norme{\mathbb{I}_f}_{\gamma(L^2(S);X)}$. A remarkable fact is the density 
of simple function in the set $\gamma(S;X)$ \cite[Proposition 9.2.5]{hnvw}.

Now we collect some important results, which will be useful in the next sections.
We start with the so-called Multiplier Theorem \cite[Theorem 9.5.1]{hnvw},
a high ranking result involving the $\gamma$-boundedness. 
We state it under the assumption that $X$ does not contain $c_0$.
Thus the following statement applies to $K$-convex spaces.  

\begin{thm}[$\gamma$-Multiplier theorem]\label{multithm} 
Let $X$ be a Banach space not containing  $c_0$. 
Let $M : S \rightarrow \mathcal{L}(X)$ be a strongly mesurable function
and assume that its range $\mathcal{M} := \{ M(s) : s\in S\}$ is $\gamma$-bounded. 
Then for every function $\psi : S \rightarrow X$ in $\gamma(S;X)$, 
the function $M\psi : S \rightarrow X$ belongs to $\gamma(S;X)$, and we have
\[
\norme{M\psi}_{\gamma(S;X)} \leq \gamma(\mathcal{M})\norme{\psi}_{\gamma(S;X)}.
\]
\end{thm}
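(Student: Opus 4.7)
The plan is to prove the theorem in two stages: first for simple $X$-valued functions, where the $\gamma(S;X)$-norm can be computed explicitly, and then extend to general $\psi\in\gamma(S;X)$ by density.

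For the simple case, consider $\psi=\sum_{k=1}^{n}\mathbf{1}_{A_{k}}x_{k}$ with disjoint $A_{k}$ of finite positive measure and $x_k\in X$. Since $\mathbb{I}_{\psi}$ has rank at most $n$ with effective domain spanned by $(\mathbf{1}_{A_{k}})_k$, evaluation on the orthonormal system $(\mathbf{1}_{A_{k}}/\sqrt{\mu(A_{k})})_k$ of $L^2(S)$, combined with the rotation invariance of Gaussians, gives
\[
\norme{\psi}_{\gamma(S;X)}=\norme{\sum_{k=1}^{n}\sqrt{\mu(A_{k})}\,\gamma_{k}\otimes x_{k}}_{G(X)}.
\]
For the function $M\psi$, I would exploit strong measurability of each map $s\mapsto M(s)x_{k}$ together with Egorov's theorem to refine the $A_k$ into measurable pieces $B_{k,j}$ on which $M(\cdot)x_{k}$ is uniformly close to $M_{k,j}x_{k}$ for some chosen $M_{k,j}\in\mathcal{M}$. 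Then $M\psi$ is approximated in $\gamma(S;X)$ by the simple function $\sum_{k,j}\mathbf{1}_{B_{k,j}}M_{k,j}x_{k}$, and the above identity applied to this step function, together with the definition of $\gamma(\mathcal{M})$, produces
\[
\norme{\sum_{k,j}\sqrt{\mu(B_{k,j})}\,\gamma_{k,j}\otimes M_{k,j}x_{k}}_{G(X)} \leq \gamma(\mathcal{M})\norme{\sum_{k,j}\sqrt{\mu(B_{k,j})}\,\gamma_{k,j}\otimes x_{k}}_{G(X)}.
\]
Since for each fixed $k$ the combination $\sum_{j}\sqrt{\mu(B_{k,j})}\,\gamma_{k,j}$ is a centered Gaussian of variance $\mu(A_{k})$, hence distributionally equal to $\sqrt{\mu(A_{k})}\,\gamma_{k}$, the right-hand side collapses to $\gamma(\mathcal{M})\norme{\psi}_{\gamma(S;X)}$.

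For the density step, choose simple $\psi_{n}\to\psi$ in $\gamma(S;X)$, which is possible by the density noted in the text. Applying the first-stage inequality to $\psi_{n}-\psi_{m}$ shows that $(M\psi_{n})$ is Cauchy in $\gamma(S;X)$, with some limit $\Phi$. To identify $\Phi$ with $M\psi$, I would extract a subsequence with $\psi_{n_{j}}(s)\to\psi(s)$ pointwise a.e., use the uniform bound $\sup_{s}\norme{M(s)}<\infty$ implied by $\gamma$-boundedness, and pair the Pettis integrals against $L^{2}(S)\otimes X^{*}$ to verify that $\mathbb{I}_{M\psi_{n_{j}}}\to\mathbb{I}_{M\psi}$ in the weak operator topology. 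Since $\gamma$-convergence entails weak-operator convergence, this forces $\Phi=M\psi$ in $\gamma(S;X)$, and passing to the limit in the first-stage estimate yields the desired bound.

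This last identification is the principal obstacle. It is here that the assumption that $X$ does not contain $c_{0}$ becomes essential: by the Hoffmann-J\o rgensen--Kwapie\'n theorem it yields $\gamma_{\infty}(L^{2}(S);X)=\gamma(L^{2}(S);X)$, so the $\gamma$-Cauchy sequence $(\mathbb{I}_{M\psi_{n}})$ admits a genuine $\gamma$-radonifying limit representable as a Pettis integral of a measurable $X$-valued function. Without this hypothesis the $\gamma$-limit could fail to correspond to any element of $\gamma(S;X)$, and the statement itself would need to be reformulated in terms of $\gamma_{\infty}$ rather than $\gamma$.
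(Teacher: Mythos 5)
Your Stage 1 contains the essential gap. From Egorov you get that $M(\cdot)x_k$ is \emph{uniformly} close to the constants $M_{k,j}x_k$ on the pieces $B_{k,j}$, and you then assert that $M\psi$ is therefore approximated \emph{in the $\gamma(S;X)$-norm} by the step function $\sum_{k,j}\mathbf{1}_{B_{k,j}}M_{k,j}x_k$. This does not follow: the $\gamma(S;X)$-norm is not controlled by the supremum norm unless $X$ has type $2$. For instance in $X=\ell^1$ the functions $h_n=\sum_{j=1}^{n}\mathbf{1}_{[(j-1)/n,\,j/n)}\,\varepsilon e_j$ on $(0,1)$ satisfy $\norme{h_n(s)}\leq\varepsilon$ for every $s$, while $\norme{h_n}_{\gamma(0,1;\ell^1)}=\varepsilon n^{-1/2}\norme{\sum_{j=1}^n\gamma_j\otimes e_j}_{G(\ell^1)}\gtrsim \varepsilon\sqrt{n}$, so uniform smallness of the error gives no control of the $\gamma$-error. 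In fact this step quietly assumes the very conclusion: that $M\psi$ (for simple $\psi$) lies in $\gamma(S;X)$ at all is part of what must be proved, and the same problem reappears on the exceptional set that Egorov leaves over, since small measure does not give a small $\gamma$-contribution either. Once Stage 1 is unavailable, the Cauchy argument of Stage 2 has nothing to run on.

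For comparison, the paper does not prove this theorem; it quotes it from \cite[Theorem 9.5.1]{hnvw}, and the standard Kalton--Weis argument there circumvents exactly this point. One proves the estimate for \emph{step multipliers} acting on simple functions --- your Gaussian rotation-invariance computation and the application of the definition of $\gamma(\mathcal{M})$ are precisely the right ingredients for that part --- and then passes to general $M$ not by norm approximation but by testing: the $\gamma_{\infty}$-norm is a supremum over finite orthonormal systems, hence lower semicontinuous under weak operator convergence (a Fatou property), which yields $M\psi\in\gamma_{\infty}(S;X)$ with $\norme{M\psi}_{\gamma_{\infty}}\leq\gamma(\mathcal{M})\norme{\psi}_{\gamma}$. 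Only at this final point does the hypothesis $c_0\not\subset X$ enter, through the identification $\gamma_{\infty}(L^2(S);X)=\gamma(L^2(S);X)$. Your last paragraph attaches that hypothesis to the wrong step: $\gamma(L^2(S);X)$ is by definition a closed subspace of the Banach space $\gamma_{\infty}(L^2(S);X)$, hence complete, so a $\gamma$-Cauchy sequence always has a $\gamma$-limit with no assumption on $X$; where the $c_0$-assumption is genuinely indispensable is exactly the passage your Stage 1 takes for granted.
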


The next result is an inequality of Hölder type \cite[Theorem 9.2.14 (1)]{hnvw}.  
\begin{thm}[$\gamma$-Hölder inequality]\label{inetrace}
If $f: S \rightarrow X$ and $g : S \rightarrow X^*$ belongs to 
$\gamma(S;X)$ and $\gamma(S;X^*)$, respectively, then $\langle f,g\rangle$ belongs to $L^1(S)$ and we have
\[
\norme{\langle f,g\rangle}_{L^1(S)} \leq \norme{f}_{\gamma(S;X)}\norme{g}_{\gamma(S;X^*)}.
\]
\end{thm}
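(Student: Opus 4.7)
The plan is to prove the estimate first for simple functions by a direct Gaussian computation, and then extend to general $f,g$ by the density of simple functions in $\gamma(S;X)$ and $\gamma(S;X^*)$ (\cite[Proposition 9.2.5]{hnvw}, cited just above the statement).

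The discrete ingredient is the inequality
\[
\sum_{k=1}^N |\langle x_k, y_k\rangle| \leq \Bigl\|\sum_{k=1}^N \gamma_k\otimes x_k\Bigr\|_{G(X)} \Bigl\|\sum_{k=1}^N \gamma_k\otimes y_k\Bigr\|_{G(X^*)},
\]
valid for arbitrary $x_k\in X$ and $y_k\in X^*$. To prove it, I would first multiply each $y_k$ by a unimodular scalar $\alpha_k$ so that $\alpha_k \langle x_k,y_k\rangle = |\langle x_k,y_k\rangle|$, which does not affect the right-hand side. Using the orthogonality relation $\mathbb{E}[\overline{\gamma_j}\gamma_k]=\delta_{jk}$ and the fact that $(\overline{\gamma_j})$ has the same distribution as $(\gamma_j)$, one identifies $\sum_k |\langle x_k,y_k\rangle|$ with the scalar expectation $\mathbb{E}\bigl\langle \sum_j \overline{\gamma_j}\,x_j, \sum_k \gamma_k\, y_k\bigr\rangle$; Cauchy--Schwarz in $L^2(\Sigma)$ then closes the argument.

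For simple functions, a common refinement lets us write $f = \sum_{k=1}^N 1_{A_k}\otimes x_k$ and $g = \sum_{k=1}^N 1_{A_k}\otimes y_k$ with the $A_k$ pairwise disjoint and of finite positive measure. A direct pointwise computation gives
\[
\norme{\langle f,g\rangle}_{L^1(S)} = \sum_{k=1}^N \mu(A_k)\,|\langle x_k,y_k\rangle|,
\]
while evaluating $\mathbb{I}_f$ and $\mathbb{I}_g$ on the orthonormal system $e_k := \mu(A_k)^{-1/2}\,1_{A_k}$ of the span in $L^2(S)$ produces $\mathbb{I}_f(e_k)=\sqrt{\mu(A_k)}\,x_k$ and similarly for $g$, so that
\[
\norme{f}_{\gamma(S;X)} = \Bigl\|\sum_{k=1}^N \gamma_k\otimes \sqrt{\mu(A_k)}\,x_k\Bigr\|_{G(X)},
\]
and analogously for $g$. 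The discrete inequality applied to $(\sqrt{\mu(A_k)}\,x_k)_k$ and $(\sqrt{\mu(A_k)}\,y_k)_k$ then yields the theorem in this case.

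To extend, pick simple $f_n\to f$ in $\gamma(S;X)$ and $g_n\to g$ in $\gamma(S;X^*)$, and decompose
\[
\langle f,g\rangle - \langle f_n,g_n\rangle = \langle f-f_n,\, g\rangle + \langle f_n,\, g-g_n\rangle.
\]
Each term on the right is handled by the simple-function bound after approximating the non-simple factor by a further simple sequence. The step I expect to require most care is verifying that $s\mapsto\langle f(s), g(s)\rangle$ is measurable and that the approximation yields $L^1$-convergence: the measurability relies on the strong measurability of $f$ (which forces almost separably valued ranges) together with the weak measurability of $g$, and once this is in place the density argument closes the proof.
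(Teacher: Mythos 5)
The paper itself does not prove this statement; it simply quotes it from \cite[Theorem 9.2.14 (1)]{hnvw}, so your proposal has to stand on its own. Its first two steps do: the discrete inequality $\sum_k|\langle x_k,y_k\rangle|\leq\norme{\sum_k\gamma_k\otimes x_k}_{G(X)}\norme{\sum_k\gamma_k\otimes y_k}_{G(X^*)}$ is correct as you prove it (unimodular rotation, $\mathbb{E}[\overline{\gamma_j}\gamma_k]=\delta_{jk}$, Cauchy--Schwarz, and the distributional invariance under conjugation/rotation of complex Gaussians), and the simple-function case via a common refinement and the orthonormal system $e_k=\mu(A_k)^{-1/2}1_{A_k}$, giving $\norme{f}_{\gamma(S;X)}=\norme{\sum_k\gamma_k\otimes\sqrt{\mu(A_k)}x_k}_{G(X)}$, is also correct. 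The measurability of $s\mapsto\langle f(s),g(s)\rangle$ from strong measurability of $f$ and scalar measurability of $g$ is fine as well.

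The gap is the limiting step, and it is a genuine one rather than a routine density argument. The simple-function estimate applies only when \emph{both} factors are simple, so neither term in your decomposition $\langle f-f_n,g\rangle+\langle f_n,g-g_n\rangle$ is covered by it, and ``approximating the non-simple factor by a further simple sequence'' is circular: to pass from $\langle f-f_n,g_m\rangle$ to $\langle f-f_n,g\rangle$ in $L^1$ you need precisely the kind of convergence you are trying to prove (and $f-f_n$ is not simple either). What the simple case does give cleanly is that $(\langle f_n,g_n\rangle)_n$ is Cauchy in $L^1(S)$ (write the difference as $\langle f_n-f_m,g_n\rangle+\langle f_m,g_n-g_m\rangle$, where all factors are simple), hence it converges to some $h\in L^1(S)$ with $\norme{h}_{L^1(S)}\leq\norme{f}_{\gamma(S;X)}\norme{g}_{\gamma(S;X^*)}$. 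The missing idea is the identification $h=\langle f,g\rangle$ a.e.: convergence in $\gamma(S;X)$ does not imply pointwise a.e.\ convergence, even along a subsequence, so density alone does not identify the abstract limit with the pointwise pairing. Closing this requires an extra argument, for instance choosing the approximating simple functions so that they also converge almost everywhere (e.g.\ conditional expectations with respect to an increasing sequence of finite $\sigma$-algebras, which converge both in the $\gamma$-norm and a.e.), or a truncation-plus-Fatou argument on sets of finite measure where $|\langle f,g\rangle|$ is bounded, combined with the fact that multiplication by an $L^\infty$-contraction does not increase the $\gamma$-norm (the ideal property). As written, the proposal proves the inequality for simple functions and the boundedness of the extended bilinear map, but not the stated theorem.
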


Now we give an extension result, for which we refer to \cite[Theorem 9.6.1]{hnvw}.
We identify the algebraic tensor product $H^*\otimes X$ with the space 
of finite rank bounded operator operators from $H$ into $X$ in the usual way, that
is, we set $(h^*\otimes x)(h) = h^*(h)x$ for any $h\in H,\, h^*\in H^*$
and $x\in X$.

\begin{thm}\label{extensionth}[Extension theorem]
Let $H_1$ and $H_2$ be Hilbert spaces. For any bounded operator $U : H_1^* \rightarrow H_2^*$, the mapping
\[
U\otimes I_X : H_1^*\otimes X\longrightarrow H_2^*\otimes X,
\]
taking $h^*\otimes x$ to $U(h^*)\otimes x$ for any $h^*\in H_1^*$ and $x\in X$,
has an unique extension to a bounded linear operator 
$\tilde{U} : \gamma(H_1;X) \rightarrow \gamma(H_2;X)$ of the same norm. 
Furthermore for all $T \in \gamma(H_1;X)$, 
\begin{equation}\label{extensioneq}
\tilde{U}(T) = T\circ \,^t U,
\end{equation}
where $^tU$ denotes the Banach space adjoint of $U$.
\end{thm}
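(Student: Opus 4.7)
The plan is to identify $U\otimes I_X$ with the right composition operator $T\mapsto T\circ\,{}^tU$ on finite rank operators, and then extend by density. The only nontrivial ingredient will be the norm estimate $\norme{T\circ\,{}^tU}_{\gamma(H_2;X)}\leq \norme{U}\norme{T}_{\gamma(H_1;X)}$ on finite rank $T$; everything else is a routine density/continuity argument.

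First, I would unwind the identification. Recalling that $H_1,H_2$ are Hilbert (hence reflexive) so that ${}^tU:H_2\to H_1$, take a rank-one $T=h^*\otimes x$ with $h^*\in H_1^*$. For any $h_2\in H_2$,
\[
(T\circ\,{}^tU)(h_2)=h^*\bigl({}^tUh_2\bigr)\,x=(Uh^*)(h_2)\,x,
\]
by the defining relation of the Banach adjoint. Hence $T\circ\,{}^tU=(Uh^*)\otimes x=(U\otimes I_X)(h^*\otimes x)$, which proves \eqref{extensioneq} on the algebraic tensor product $H_1^*\otimes X$ by linearity.

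Since $\gamma(H_1;X)$ is defined as the closure of the finite rank operators in $\gamma_\infty(H_1;X)$, uniqueness of the extension is automatic, and it is enough to prove the key estimate on finite rank $T$ (note that $T\circ\,{}^tU$ is automatically of finite rank, hence in $\gamma(H_2;X)$). For this, fix a finite orthonormal family $(e_k)_{k=1}^n$ in $H_2$ and pick an orthonormal family $(f_j)_{j=1}^m$ in $H_1$ whose span contains $\{{}^tUe_k\}_k$. Writing ${}^tUe_k=\sum_j a_{jk}f_j$, the matrix $A=(a_{jk})$ has operator norm at most $\norme{{}^tU}=\norme{U}$. Using the singular value decomposition $A=W\Sigma V^*$ together with the orthogonal invariance of the joint distribution of the Gaussians $(\gamma_k)$, one rewrites
\[
\sum_{k=1}^n \gamma_k\otimes T({}^tUe_k)=\sum_i \sigma_i\,\tilde\gamma_i\otimes Tg_i
\]
in $G(X)$, where $(\tilde\gamma_i)$ is a new i.i.d.\ standard Gaussian sequence (obtained via $V^*$) and $g_i:=\sum_j W_{ji}f_j$ form a new orthonormal family in $H_1$. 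Since each singular value satisfies $\sigma_i\leq\norme{U}$, Kahane's contraction principle for Gaussians gives
\[
\Bigl\Vert\sum_k\gamma_k\otimes T({}^tUe_k)\Bigr\Vert_{G(X)}\leq\norme{U}\,\Bigl\Vert\sum_i\tilde\gamma_i\otimes Tg_i\Bigr\Vert_{G(X)}\leq\norme{U}\,\norme{T}_{\gamma(H_1;X)}.
\]
Taking the supremum over finite orthonormal families $(e_k)$ in $H_2$ yields the desired estimate, and therefore $\norme{\tilde U}\leq\norme{U}$.

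For the reverse inequality, test on rank-one tensors: if $T=h^*\otimes x$ with $\norme{x}=1$, then $\norme{T}_{\gamma(H_1;X)}=\norme{h^*}_{H_1^*}$ while $\norme{\tilde U(T)}_{\gamma(H_2;X)}=\norme{Uh^*}_{H_2^*}$, so that taking the supremum over $h^*$ of unit norm gives $\norme{\tilde U}\geq\norme{U}$. The identity \eqref{extensioneq} then extends from finite rank operators to all $T\in\gamma(H_1;X)$ by continuity. The main obstacle in this plan is the Gaussian manipulation in the third paragraph: one must carefully combine the rotational invariance of standard complex Gaussians with the contraction principle, while keeping track of which Hilbert space each orthonormal family lives in; the surrounding density arguments are then entirely standard.
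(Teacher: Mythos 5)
Your proof is correct: the identification of $U\otimes I_X$ with $T\mapsto T\circ{}^tU$ on rank-one tensors, the SVD plus unitary invariance of the Gaussian vector plus the contraction principle to get $\norme{T\circ{}^tU}_{\gamma}\leq\norme{U}\norme{T}_{\gamma(H_1;X)}$, the rank-one test for the reverse norm inequality, and the density argument are all sound. The paper itself offers no proof (it quotes the result from \cite[Theorem 9.6.1]{hnvw}), and your argument is essentially the standard one underlying that reference, via the right ideal property of the $\gamma$-norms.
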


To conclude this part, we apply the above principles to 
the Fourier-Plancherel transform $L^2(\R)\to L^2(\R)$. We identify the dual of $L^2(\R)$
with $L^2(\R)$ via the usual duality map provided by integration on $\R$.
   
\begin{lem}\label{fourierlem}
For any $f\in L^1(\R;X)$, let $\hat{f}$ be its Fourier transform defined by 
\[
\hat{f}(t) = \,\int_{\R}e^{-its}f(s)ds.
\]

Let  $\mathcal{F} : L^2(\R) \rightarrow L^2(\R)$ be the Fourier-Plancherel transform 
(which coincides with $f \mapsto \hat{f}$ on $L^1(\R)\cap L^2(\R)$). Let  
$\tilde{\mathcal{F}} : \gamma(L^2(\R);X) \rightarrow \gamma(L^2(\R);X)$ 
be its extension provided by Theorem \ref{extensionth}. If $f\in \gamma(\R;X) \cap L^1(\R;X)$, then we have
\[
\widehat{f} \in \gamma(\R;X) 
\qquad\hbox{and}\qquad
\tilde{\mathcal{F}}(\mathbb{I}_{f}) = \mathbb{I}_{\widehat{f}}
\]
and further,
\[
\norme{\widehat{f}}_{\gamma(\R;X)} = \sqrt{2\pi}\,\norme{f}_{\gamma(\R;X)}.
\]
\end{lem}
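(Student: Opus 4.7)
\medskip

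\noindent\textbf{Proof proposal.} The plan is to apply the Extension Theorem (Theorem \ref{extensionth}) directly to $U=\mathcal{F}$, interpret the resulting formula \eqref{extensioneq} by computing the transpose $\,^t\mathcal{F}$, and then identify the operator that comes out with $\mathbb{I}_{\widehat{f}}$ by testing against an arbitrary $x^{*}\in X^{*}$ and invoking the scalar Parseval relation.

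First I would observe that, under the standard bilinear identification $L^{2}(\R)^{*}\cong L^{2}(\R)$ given by $\langle g,h\rangle=\int_{\R}g(s)h(s)\,ds$, the transpose of the Fourier--Plancherel transform is $\mathcal{F}$ itself. Indeed, Fubini on $L^{1}(\R)\cap L^{2}(\R)$ gives $\int\widehat{g}\,h=\int g\,\widehat{h}$, and this extends by density to $g\in L^{2}(\R)$. Combined with \eqref{extensioneq} applied to $T=\mathbb{I}_{f}$, this yields
\[
\tilde{\mathcal{F}}(\mathbb{I}_{f})=\mathbb{I}_{f}\circ\,^{t}\mathcal{F}=\mathbb{I}_{f}\circ\mathcal{F}
\]
as elements of $\gamma(L^{2}(\R);X)$.

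Next I would verify that $\widehat{f}\in\gamma(\R;X)$ makes sense, i.e.\ that $\widehat{f}$ is measurable and weakly $L^{2}$. Measurability (in fact norm-continuity) comes for free from $f\in L^{1}(\R;X)$. For the weak $L^{2}$ property, pick $x^{*}\in X^{*}$: the scalar function $\langle f(\cdot),x^{*}\rangle$ belongs to $L^{1}(\R)\cap L^{2}(\R)$ (using $f\in L^{1}(\R;X)$ on the one hand and $f\in\gamma(\R;X)$ on the other), so by the scalar Plancherel theorem its Fourier transform lies in $L^{2}(\R)$; and Fubini gives the pointwise identity $\langle\widehat{f}(t),x^{*}\rangle=\widehat{\langle f,x^{*}\rangle}(t)$.

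The main technical point is to identify $\mathbb{I}_{f}\circ\mathcal{F}$ with $\mathbb{I}_{\widehat{f}}$. This is where I expect the main bookkeeping work. For $g\in L^{2}(\R)$ and $x^{*}\in X^{*}$, I would test both Pettis integrals against $x^{*}$:
\[
\bigl\langle(\mathbb{I}_{f}\circ\mathcal{F})(g),x^{*}\bigr\rangle=\int_{\R}(\mathcal{F}g)(s)\langle f(s),x^{*}\rangle\,ds,
\qquad
\bigl\langle\mathbb{I}_{\widehat{f}}(g),x^{*}\bigr\rangle=\int_{\R}g(s)\langle\widehat{f}(s),x^{*}\rangle\,ds.
\]
By the previous paragraph $\langle\widehat{f},x^{*}\rangle=\widehat{\langle f,x^{*}\rangle}$, so the equality of the two right-hand sides is precisely the scalar Parseval relation $\int(\mathcal{F}g)\,\psi=\int g\,\widehat{\psi}$ applied with $\psi=\langle f,x^{*}\rangle\in L^{1}\cap L^{2}$. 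Since this holds for every $x^{*}\in X^{*}$ and both sides are honest Pettis integrals, we conclude $\mathbb{I}_{f}\circ\mathcal{F}=\mathbb{I}_{\widehat{f}}$, which together with the previous step gives $\widehat{f}\in\gamma(\R;X)$ and $\tilde{\mathcal{F}}(\mathbb{I}_{f})=\mathbb{I}_{\widehat{f}}$.

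Finally, for the norm identity, I would exploit that $\mathcal{F}=\sqrt{2\pi}\,U$ for a unitary $U$ on $L^{2}(\R)$. Applying the Extension Theorem to both $U$ and $U^{-1}$ and using that $\widetilde{UV}=\tilde{U}\tilde{V}$ (which follows from \eqref{extensioneq} and $\,^{t}(UV)=\,^{t}V\,\,^{t}U$), one sees that $\tilde{U}$ is invertible on $\gamma(L^{2}(\R);X)$ with inverse $\widetilde{U^{-1}}$, and since both extensions have norm at most $1$, each is in fact an isometry. Therefore $\tilde{\mathcal{F}}=\sqrt{2\pi}\,\tilde{U}$ scales norms exactly by $\sqrt{2\pi}$, which yields
\[
\norme{\widehat{f}}_{\gamma(\R;X)}=\norme{\tilde{\mathcal{F}}(\mathbb{I}_{f})}_{\gamma(L^{2}(\R);X)}=\sqrt{2\pi}\,\norme{\mathbb{I}_{f}}_{\gamma(L^{2}(\R);X)}=\sqrt{2\pi}\,\norme{f}_{\gamma(\R;X)}.
\]
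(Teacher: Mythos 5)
Your proposal is correct and follows essentially the same route as the paper's proof: identify $^{t}\mathcal{F}=\mathcal{F}$, apply \eqref{extensioneq} to get $\tilde{\mathcal{F}}(\mathbb{I}_f)=\mathbb{I}_f\circ\mathcal{F}$, identify this with $\mathbb{I}_{\widehat{f}}$ by testing against functionals (the paper does the same computation via Fubini on $g\in L^1\cap L^2$ and density), and use the isometry of $(\sqrt{2\pi})^{-1}\mathcal{F}$ for the norm equality. Your explicit justification that the extension of a unitary is an isometry (via $\widetilde{U^{-1}}$ and multiplicativity) is a welcome elaboration of what the paper calls the ``extension principle,'' but it is not a different argument.
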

\begin{proof}
Obviously $\widehat{f}$ is measurable, and as $f$ is weakly
$L^2$, $\widehat{f}$ is also weakly $L^2$. Indeed for $x \in X^*$ one has by Fourier-Plancherel theorem :
\begin{align*}
\norme{\langle \widehat{f},x^* \rangle}^2_{L^2(\R)} &= 
\int_{\R}\big|\langle\int_{\R}f(s)e^{-ist}ds,x^*\rangle\big|^2dt  = 
\int_{\R}\big|\int_{\R}e^{-its}\langle f(s),x^*\rangle ds\big|^2dt \\
& = \norme{\mathcal{F}(\langle f,x^*\rangle)}^2_{L^2(\R)} = 
2\pi\,\norme {\langle f,x^*\rangle}^2_{L^2(\R)}.
\end{align*}
It follows that $\mathbb{I}_{\widehat{f}}$ is well defined and bounded. 
Now let $g\in L^2(\R)\cap L^1(\R)$. Then by Fubini theorem, using equality 
$^t\mathcal{F} = \mathcal{F}$ and \eqref{extensioneq}:
\begin{align*}
\tilde{\mathcal{F}}(\mathbb{I}_f)(g) &= \mathbb{I}_f\circ \mathcal{F}(g) =
\int_{\R}\mathcal{F}(g)(t)f(t)dt = 
\int_{\R}\big(\int_{\R}g(s)e^{-ist}ds\big)f(t)dt  \\ & = \int_{\R}g(s)\big(\int_{\R}f(t)e^{-its}dt\big)ds  = \mathbb{I}_{\widehat{f}}(g). 
\end{align*}
By density and since $\tilde{\mathcal{F}}(\mathbb{I}_f)$ and $\mathbb{I}_{\widehat{f}}$
are bounded,  the equality $\tilde{\mathcal{F}}(\mathbb{I}_f) = \mathbb{I}_{\widehat{f}}$
follows. Hence $\widehat{f} \in \gamma(\R;X)$.

Finally since $(\sqrt{2\pi})^{-1}\mathcal{F}$ is an isometry,
extension principle yields the equalities
\[
\norme{\widehat{f}}_{\gamma(\R;X)} = \norme{\mathbb{I}_{\widehat{f}}}_{\gamma(L^2(\R);X)} = 
\norme{\tilde{\mathcal{F}}(\mathbb{I}_f)}_{\gamma(L^2(\R);X)} =  
\sqrt{2\pi} \norme{\mathbb{I}_f}_{\gamma(L^2(\R);X)} = \sqrt{2\pi}\norme{f}_{\gamma(\R;X)}.
\]

\end{proof}


\section{STRONG $\gamma$-m-BOUNDED FUNCTIONAL CALCULUS}

We will say that a $C_0$-semigroup $(T_t)_{t\geq 0}$ on Banach space is of $\gamma$-type $\omega$ (resp.
of $W\gamma$-type $\omega$) if the set $\{ e^{\omega t}T_t\, :\, t\geq 0 \}$ is $\gamma$-bounded (resp.
$W\gamma$-bounded). If no such $\omega$ exists, we will say that $(T_t)_{t\geq 0}$ has no $\gamma$-type
(resp. no $W\gamma$-type).

\begin{ex}\label{nogtype}
It is easy to exhibit $C_0$-semigroups with no $\gamma$-type. Let $1\leq p<\infty$ and let 
$(S_t)_{t\geq 0}$ be the right translation $C_0$-semigroup on $L^p(\R)$, defined by
$S_t(f) = f(\cdot-t)$ for any $t\geq 0$ and any
$f\in L^p(\R)$. Then, for $p \not=2$, $(S_t)_{t\geq 0}$ has no $\gamma$-type. This
follows from the well-known fact that 
$\{S_t, \; t\in [0,1]\}$ is not $\gamma$-bounded. 
Indeed, assume that $p \in [1, 2)$ and for $n\in \N$ and $i \in \{1, \ldots,n\}$, 
let $t_i^n = \frac{i-1}{n}$ and $f^n_i = \chi_{[0,1/n]}$. Then  we have
\[
\norme{\big(\sum_{i=1}^{n}  |S_{t_i^n}f^n_i|^2\big)^{1/2}}_{L^p(\R)} = \norme{\chi_{[0,1]}}_{L^p(\R)} = 1.
\]
whereas
\[
\norme{\big(\sum_{i=1}^{n}  |f^n_i|^2\big)^{1/2}}_{L^p(\R)} = \norme{n^{1/2}\chi_{[0,1/n]}}_{L^p(\R)}  = n^{1/2 - 1/p}.
\]
Hence the inequality \eqref{Rboundedness2} cannot be true. The proof in the case $p \in (2, \infty)$ 
is similar.

The fact that the set $\{S_t\, :\,  t\in [0,1]\}$ is not $\gamma$-bounded immediately implies that for any $\omega$,
the set $\{e^{\omega t}S_t\, :\,  t\in [0,1]\}$ is not $\gamma$-bounded, and hence 
$\{e^{\omega t}S_t\, :\,  t\geq 0\}$ cannot be $\gamma$-bounded.
\end{ex}

Throughout this section, we let $X$ be a Banach space. Then we let $A$ be a half-plane type operator of type 
$\omega$ on $X$.

The condition that $A$ has a bounded $H^{\infty}$-functional calculus of type $\omega$ 
can be rephrased by saying that the set 
\[
\underset{\alpha < \omega }\bigcup \{f(A)\, :\, \, f \in H^{\infty}(R_{\alpha}), 
\norme{f}_{H^{\infty}(R_{\omega})} \leq 1 \} 
\]
is uniformly bounded.
This formulation and Definition \ref{mbounded} motivate the following definitions.

\begin{df}
\ 
\begin{enumerate}[label = \arabic*)]
\item
We say that $A$ has a $\gamma$-bounded (resp. a  $W\gamma$-bounded) 
$H^{\infty}$-functional calculus of type $\omega$ if $A$ 
has a bounded 
$H^{\infty}$-functional calculus of type $\omega$ and the set 
\[
\underset{\alpha < \omega }\bigcup \{f(A)\,:\, 
f \in H^{\infty}(R_{\alpha}), \norme{f}_{H^{\infty}(R_{\omega})} \leq 1 \} 
\]
is $\gamma$-bounded (resp. $W\gamma$-bounded).
\item
Let $m\geq 1$ be an integer.
We say that $A$ has a strong $\gamma$-$m$-bounded (resp. a
strong $W\gamma$-$m$-bounded)
functional calculus 
of type $\omega$ if $A$ has a strong $m$-bounded functional calculus 
of type $\omega$ and the set 
\begin{equation}\label{E}
\underset{\alpha < \omega }\bigcup \{(\omega - \alpha)^mf^{(m)}(A)\, :\,  
f \in H^{\infty}(R_{\alpha}), \norme{f}_{H^{\infty}(R_{\alpha})} \leq 1 \} 
\end{equation}
is $\gamma$-bounded (resp. $W\gamma$-bounded).
\end{enumerate}
\end{df}

\begin{rq1}\label{rq1}
\ 
\begin{enumerate}[label = \arabic*)]
\item 
To prove that the set 
\[\underset{\alpha < \omega }\bigcup \{(\omega - \alpha)^mf^{(m)}(A)\, :\,  f \in H^{\infty}(R_{\alpha}), 
\, \norme{f}_{H^{\infty}(R_{\alpha})} \leq 1 \} \text{ is $\gamma$-bounded,} 
\]
it is enough to prove that 
\[
\underset{\alpha < \omega }\bigcup \; \underset{\delta < \alpha }\bigcup \{(\omega - \alpha)^mf^{(m)}(A)   
\, :\,  f \in \mathcal{E}(R_{\delta}), \,\norme{f}_{H^{\infty}(R_{\alpha})} \leq 1 \} \text{ is $\gamma$-bounded.} 
	\]
This follows from the convergence lemma \cite[Theorem 3.1]{bat-haa}, the argument in the 
proof of \cite[Theorem 5.6 (a)]{bat-haa}, and Proposition \ref{R-summarizes}. Details
are left to the reader.
\item 
Likewise to prove that $A$ has a $\gamma$-bounded 
$H^{\infty}$-functional calculus of type $\omega$, it is enough to prove that the set
\[
\underset{\alpha < \omega }\bigcup 
\{f(A)\,:\, 
f \in \mathcal{E}(R_{\alpha}),\, \norme{f}_{H^{\infty}(R_{\omega})} \leq 1 \} 
\]
is $\gamma$-bounded.
\item 
If $A$ has a $\gamma$-bounded $H^{\infty}$-functional calculus of type $\omega$, then 
it has a strong $\gamma$-$m$-bounded functional calculus of type $\omega$.
Indeed consider the set
\[
\Delta := \underset{\beta < \omega }\bigcup
\{g(A)\, :\,  g \in H^{\infty}(R_{\beta}) ,\,\norme{g}_{H^{\infty}(R_{\omega})} \leq 1 \},
\]
and assume that $\Delta$ is $\gamma$-bounded.
Let $\alpha < \omega$ and let
$f \in H^{\infty}(R_{\alpha})$ with $\norme{f}_{H^{\infty}(R_{\alpha})} \leq 1$.
According to \cite[Lemma 5.4]{bat-haa}, $f^{(m)}\in H^{\infty}(R_{\beta})$ for any
$\beta\in(\alpha,\omega)$ and 
$$
\norme{\frac{(\omega - \alpha)^m}{m!}f^{(m)}}_{H^{\infty}(R_{\omega})} \leq 1.
$$ 
Consequently, 
\[
\underset{\alpha < \omega}\bigcup\bigg\{\frac{(\omega - \alpha)^m}{m!}
f^{(m)}(A)\, :\, f \in  H^{\infty}(R_{\alpha}), \norme{f}_{H^{\infty}(R_{\alpha})}\leq 1 \bigg\} \subset \Delta.
	\]
Hence the above set is $\gamma$-bounded, which shows that 
$A$ has a strong  $\gamma$-$m$-bounded functional calculus of type $\omega$. 
\end{enumerate}

The previous three statements hold as well with $W\gamma$-boundedness
replacing $\gamma$-boundedness.
\end{rq1}

Recall the condition $(GFS)_{m,\omega}$ from Definition \ref{GFS}. We introduce 
the following stronger form.

\begin{df}\label{WgammaGFS}
Let $m\geq 1$ be an integer.
We say that $A$ has property $(W\gamma\text{-}GFS)_{m,\omega}$ if there exists
a constant $C>0$ such that for any 
$N \in \N$, for any  $\alpha_1, \ldots , \alpha_N < \omega$, and for any
$x_1, \ldots , x_N \in X$ and $y_1, \ldots , y_N \in X^*$, we have
\[	 
\sum_{k=1}^{N}\int_{\R}|\langle (\omega-\alpha_k)^m R(\alpha_k+it,A)^{m+1}x_k,y_k \rangle| dt 
		\leq C\norme{\sum_{k=1}^N\gamma_k\otimes x_k}_{G(X)}\norme{\sum_{k=1}^N\gamma_k\otimes y_k}_{G(X^*)}.
\]
\end{df}

Clearly, $(W\gamma\text{-}GFS)_{m,\omega}$ implies $(GFS)_{m,\omega}$. Further if $X$ is a Hilbert space, 
then $(GFS)_{m,\omega}$ and $(W\gamma\text{-}GFS)_{m,\omega}$ are equivalent.

In the sequel we let $\mathbb{D}=\{ z\in\C\, :\, \vert z\vert<1\}$ denote the open unit disc.
The following statement is straightforward.

\begin{lem}\label{equiWGFS}
Let $m\geq 1$ be an integer. Assume that  $A$ has property $(GFS)_{m,\omega}$.
For any measurable function $\epsilon : \R \rightarrow \overline{\mathbb{D}}$ 
and for any $\alpha < \omega$, let 
$$
\int_{\R}\epsilon(t)R(\alpha+it,A)^{m+1}dt\ \in \mathcal{L}(X,X^{**})
$$
denote the operator defined by 
\[
\Bigl\langle\Bigl( \int_{\R}\epsilon(t)R(\alpha+it,A)^{m+1} dt\Bigr)x,y \Bigr\rangle \,=\, 
\int_{\R}\langle\epsilon(t)R(\alpha+it,A)^{m+1}x,y \rangle dt,\qquad  x \in X,\ y \in X^*.
\]
Then, the operator $A$ has property $(W\gamma\text{-}GFS)_{m,\omega}$  if and only if the set 
\begin{equation}\label{Set}
\big\{(\omega-\alpha)^m\displaystyle\int_{\R}\epsilon(t)R(\alpha+it,A)^{m+1}dt \, :\,
\epsilon : \R \rightarrow \overline{\mathbb{D}}\text{ measurable},\, \alpha < \omega \big\}
\end{equation}
is $W^*\gamma$-bounded.
\end{lem}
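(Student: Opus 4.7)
The plan is to unwind the two definitions and observe that the only real content is the freedom to choose the measurable function $\epsilon$ so as to extract an absolute value.

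First I would check that, under $(GFS)_{m,\omega}$, the expression
\[
\langle Tx,y\rangle \,:=\, \int_{\R}\epsilon(t)\langle R(\alpha+it,A)^{m+1}x,y\rangle\,dt
\]
defines a bounded bilinear form on $X\times X^*$ for every measurable $\epsilon:\R\to\overline{\mathbb{D}}$ and every $\alpha<\omega$, with bound $C(\omega-\alpha)^{-m}\|x\|\|y\|$. This comes directly from $\vert\epsilon\vert\leq 1$ and $(GFS)_{m,\omega}$. Such a form corresponds to an operator $T\in\mathcal{L}(X,X^{**})$, and the normalization $(\omega-\alpha)^m T$ has operator norm bounded by $C$. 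In particular the set in \eqref{Set} is a well-defined subset of $\mathcal{L}(X,X^{**})=\mathcal{L}(X,(X^*)^*)$, so $W^*\gamma$-boundedness makes sense.

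For the direction ``$(W\gamma\text{-}GFS)_{m,\omega}\Rightarrow$ \eqref{Set} is $W^*\gamma$-bounded'', take any finite sequence $T_k=(\omega-\alpha_k)^m\int_{\R}\epsilon_k(t)R(\alpha_k+it,A)^{m+1}dt$ from the set together with $x_k\in X$ and $y_k\in X^*$. Using $\vert\epsilon_k(t)\vert\leq 1$ and the triangle inequality,
\[
\sum_{k=1}^{N}\vert\langle T_kx_k,y_k\rangle\vert \;\leq\; \sum_{k=1}^{N}\int_{\R}(\omega-\alpha_k)^m\vert\langle R(\alpha_k+it,A)^{m+1}x_k,y_k\rangle\vert\,dt,
\]
and the $(W\gamma\text{-}GFS)_{m,\omega}$ hypothesis bounds the right-hand side by $C\bigl\|\sum_k\gamma_k\otimes x_k\bigr\|_{G(X)}\bigl\|\sum_k\gamma_k\otimes y_k\bigr\|_{G(X^*)}$, which is exactly \eqref{W*gambounded}.

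For the converse direction, given $x_k\in X$, $y_k\in X^*$ and $\alpha_k<\omega$, define
\[
\epsilon_k(t) \,:=\, \overline{\operatorname{sgn}\langle R(\alpha_k+it,A)^{m+1}x_k,y_k\rangle},
\]
where $\operatorname{sgn}(z)=z/\vert z\vert$ for $z\neq 0$ and $\operatorname{sgn}(0)=0$. The inner scalar function of $t$ is continuous (indeed analytic in the strip), so $\epsilon_k$ is measurable and takes values in $\overline{\mathbb{D}}$. With this choice, the corresponding operator $T_k=(\omega-\alpha_k)^m\int_{\R}\epsilon_k(t)R(\alpha_k+it,A)^{m+1}dt$ from \eqref{Set} satisfies
\[
\langle T_kx_k,y_k\rangle \,=\, (\omega-\alpha_k)^m\int_{\R}\vert\langle R(\alpha_k+it,A)^{m+1}x_k,y_k\rangle\vert\,dt \,\geq\, 0,
\]
so $\vert\langle T_kx_k,y_k\rangle\vert=\langle T_kx_k,y_k\rangle$. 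Summing over $k$ and applying the assumed $W^*\gamma$-boundedness of \eqref{Set} recovers the $(W\gamma\text{-}GFS)_{m,\omega}$ inequality.

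There is no genuine obstacle here beyond bookkeeping; the only subtlety is verifying measurability of the sign function and the standard identification of the bilinear-form data with an element of $\mathcal{L}(X,X^{**})$ so that $W^*\gamma$-boundedness is applicable. Everything else is the elementary observation that $\epsilon$ is a free parameter that can be tuned to realize the absolute value pointwise in $t$.
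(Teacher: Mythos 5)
Your proof is correct and is exactly the argument the paper has in mind: the paper states this lemma without proof, calling it straightforward, and the intended verification is precisely your unwinding of the two definitions, with the one direction following from $\vert\epsilon\vert\leq 1$ and the converse from choosing $\epsilon_k$ as the (measurable) conjugate sign of $t\mapsto\langle R(\alpha_k+it,A)^{m+1}x_k,y_k\rangle$, together with the identification of the $(GFS)$-bounded bilinear form with an element of $\mathcal{L}(X,X^{**})$.
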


\begin{rq1}

Arguing as in Remark \ref{sechaveGFS}, one shows that if 
$A$ is $\gamma$-sectorial of $\gamma$-type
$<\frac{\pi}{2}$, then 
$A$ has property $W\gamma$-$(GFS)_{1,0}$. (We refer e.g. to  
\cite{hnvw} for information on $\gamma$-sectorial operators.)

Indeed assume that $A$ is $\gamma$-sectorial of $\gamma$-type
$<\frac{\pi}{2}$. Then the set
$$
\bigl\{(\alpha+it)R(\alpha +it,A)\, :\, \alpha<0,\ t\in\R\bigr\}
$$
is $\gamma$-bounded. Next for any measurable
$\epsilon : \R \rightarrow \overline{\mathbb{D}}$, we can write
$$
(-\alpha)\int_{\R}\epsilon(t)R(\alpha+it,A)^2 dt\,
= \int_{\R} (-\alpha)(\alpha+it)^{-2}
\epsilon(t)\bigl((\alpha+it)R(\alpha +it,A)\bigr)^2 dt.
$$
Since $\norme{t\mapsto (-\alpha)(\alpha+it)^{-2}
\epsilon(t)}_{L^1}\leq \pi$, Lemma \ref{Linftymeans} ensures that
the set
$$
\big\{(-\alpha)\displaystyle\int_{\R}\epsilon(t)R(\alpha+it,A)^{2}dt \, :\,
\epsilon : \R \rightarrow \overline{\mathbb{D}}\text{ measurable},\, \alpha < 0 \big\}
$$
is $W^*\gamma$-bounded. Hence the result follows from the above Lemma \ref{equiWGFS}.
\end{rq1}

\begin{prop}\label{propPm}
Let $m\geq 1$ be an integer and
assume that $A$ has property $(W\gamma\text{-}GFS)_{m,\omega}$. 
Then for any integer $1\leq p \leq m$, 
$A$ has property $(W\gamma\text{-}GFS)_{p,\omega}$.
\end{prop}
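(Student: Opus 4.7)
The plan is to reformulate both the hypothesis $(W\gamma\text{-}GFS)_{m,\omega}$ and the conclusion $(W\gamma\text{-}GFS)_{p,\omega}$ as statements about $W^*\gamma$-boundedness via Lemma \ref{equiWGFS}, and to pass from level $m$ down to level $p$ by means of an integral representation of $R(\alpha+it,A)^{p+1}$ in terms of $R(\beta+it,A)^{m+1}$ for $\beta<\alpha$, combined with Lemma \ref{WeakL1means}. The only nontrivial case is $p<m$, which I assume below.

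First, I would establish the identity
\[
R(\alpha+it,A)^{p+1} = \frac{(-1)^{m-p}\, m!}{p!\,(m-p-1)!}\int_{-\infty}^{\alpha}(\alpha-\beta)^{m-p-1} R(\beta+it,A)^{m+1}\, d\beta,
\]
valid for every $\alpha<\omega$ and $t\in\R$. This is obtained by iterating $(m-p)$ times the elementary identity $\frac{d}{d\beta}R(\beta+it,A)^{q+1} = -(q+1)R(\beta+it,A)^{q+2}$ together with the fundamental theorem of calculus, using the decay $\norme{R(\beta+it,A)^{q}} = O(|\beta|^{-q})$ as $\beta\to-\infty$ (a consequence of the half-plane type condition) to kill boundary terms; equivalently, it amounts to reducing an iterated integral over the simplex $\{\beta_{m-p}<\cdots<\beta_1<\alpha\}$ to a single-variable integral in the last coordinate.

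Next, for a fixed measurable $\epsilon:\R\to\overline{\mathbb{D}}$ and $\alpha<\omega$, I will multiply this identity by $(\omega-\alpha)^p\epsilon(t)$, integrate in $t$, and interchange the order of integration (which is legitimate thanks to the $(GFS)_{m,\omega}$ estimate already contained in the hypothesis) to obtain, in the weak-$^*$ operator sense,
\[
(\omega-\alpha)^p\int_{\R}\epsilon(t) R(\alpha+it,A)^{p+1}\, dt = \frac{(-1)^{m-p}m!}{p!(m-p-1)!}\int_{-\infty}^{\alpha}\phi_{\alpha}(\beta)\, T_{\beta}^{\epsilon}\, d\beta,
\]
where
\[
T_{\beta}^{\epsilon} := (\omega-\beta)^m\int_{\R}\epsilon(t) R(\beta+it,A)^{m+1}\, dt
\qquad\text{and}\qquad
\phi_{\alpha}(\beta) := \frac{(\omega-\alpha)^p(\alpha-\beta)^{m-p-1}}{(\omega-\beta)^m}.
\]
The change of variable $v=(\alpha-\beta)/(\omega-\alpha)$ then yields
\[
\int_{-\infty}^{\alpha}|\phi_{\alpha}(\beta)|\, d\beta = \int_0^{\infty}\frac{v^{m-p-1}}{(1+v)^m}\, dv = B(m-p,p) = \frac{(m-p-1)!(p-1)!}{(m-1)!},
\]
which is a constant independent of $\alpha$.

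By the hypothesis and Lemma \ref{equiWGFS}, the family $\mathcal{T}_m$ of all operators $T_{\beta}^{\epsilon}$ (with $\beta<\omega$ and $\epsilon$ as above) is $W^*\gamma$-bounded. Since $\phi_{\alpha}$ has $L^1$-norm uniformly bounded in $\alpha$, the weak-$L^1$ integral means principle (Lemma \ref{WeakL1means}), applied to $\mathcal{T}_m$ and the kernel $\phi_{\alpha}$, shows that the set
\[
\Big\{(\omega-\alpha)^p\int_{\R}\epsilon(t)R(\alpha+it,A)^{p+1}\, dt \,:\,\epsilon:\R\to\overline{\mathbb{D}}\text{ measurable},\ \alpha<\omega\Big\}
\]
is itself $W^*\gamma$-bounded. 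Invoking Lemma \ref{equiWGFS} in the converse direction then gives $(W\gamma\text{-}GFS)_{p,\omega}$. The main technical hurdle is rigorously justifying the integral identity and the Fubini interchange; once these are in hand, the transfer of $W^*\gamma$-boundedness from $\mathcal{T}_m$ to the level-$p$ family is routine.
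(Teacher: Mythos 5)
Your proof is correct and takes essentially the same approach as the paper: both reformulate the two levels through Lemma~\ref{equiWGFS}, represent the lower resolvent power as an integral of $R(\cdot+it,A)^{m+1}$ over a left half-line, justify the Fubini interchange by the scalar $(GFS)_{m,\omega}$ estimate, and conclude with the weak $L^1$-integral means Lemma~\ref{WeakL1means}. The only difference is that the paper descends one level at a time by induction, using the kernel $m(\omega-\alpha)^{m-1}(\omega-\alpha-u)^{-m}$ of $L^1$-norm $\frac{m}{m-1}\leq 2$, whereas you pass from $m$ to $p$ in a single step with the iterated kernel $(\alpha-\beta)^{m-p-1}$ and a Beta-function evaluation.
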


\begin{proof}
We proceed by induction, showing that if $m\geq 2$, then $(W\gamma\text{-}GFS)_{m,\omega}$
implies $(W\gamma\text{-}GFS)_{m-1,\omega}$.

Suppose that $A$ has property $(W\gamma\text{-}GFS)_{m,\omega}$, with $m\geq 2$. 
Let $\epsilon : \R \rightarrow \overline{\mathbb{D}}$ be a measurable function
and let $\alpha < \omega$. Applying \cite[Proposition 6.3. (a)]{bat-haa}, we have
$$
R(\alpha+it,A)^{m}\,=\, -m \int_{-\infty}^{\alpha}R(u+it,A)^{m+1}du
$$
for any $t\in\R$. Hence
for any $x\in X$, for any $y \in X^*$ and for any $t \in \R$,
\begin{align*}
\langle \epsilon(t) R(\alpha+it,A)^{m}x,y \rangle 
& = -\int_{-\infty}^{\alpha} m\langle  \epsilon(t)R(u+it,A)^{m+1}x,y \rangle du\\
& = -\int_{-\infty}^{0} m\langle  \epsilon(t)R(\alpha + u+it,A)^{m+1}x,y \rangle du.
\end{align*}
We now integrate over $t$. Property $(GFS)_{m,\omega}$ ensures that 
we can apply Fubini's theorem in the following
computation:
\begin{align*}
(\omega-\alpha)^{m-1} \int_{\R}&\langle  \epsilon(t)R(\alpha+it,A)^{m}x,y \rangle dt \\
&=- \int_{\R}m(\omega-\alpha)^{m-1}\int_{-\infty}^{0} 
\langle\epsilon(t)R(	\alpha + u+it,A)^{m+1}x,y \rangle du dt \\ 
&=  -\int_{-\infty}^{0}m(\omega-\alpha)^{m-1}\int_{\R} 
\langle\epsilon(t)R(	\alpha + u+it,A)^{m+1}x,y \rangle dt du\\
&=- \int_{-\infty}^{0}m\frac{(\omega-\alpha)^{m-1}}{(\omega -\alpha -u )^m}
\int_{\R}\langle(\omega -\alpha-u)^m \epsilon(t)R(	\alpha + u + it,A)^{m+1}x,y \rangle dt du.
\end{align*}

Let $\mathcal{T} $ be the set (\ref{Set}). By assumption, $\mathcal{T} $ is $W^*\gamma$-bounded hence by
Lemma \ref{WeakL1means},
the set
\begin{align*}
\Gamma := \big\{ \int_{-\infty}^{0} \phi(u)\int_{\R} &(\omega -\alpha-u)^m 
\epsilon(t)R(	\alpha + u + it,A)^{m+1} dt du\, :\, \\
& \epsilon : \R \rightarrow \overline{\mathbb{D}}\text{ measurable},\, \alpha < \omega,\,
\phi\in L^1((-\infty,0)),\, \norme{\phi}_{L^1} \leq 2 \big\}
\end{align*}
is $W^*\gamma$-bounded. Since 
$$
\norme{m\frac{(\omega-\alpha)^{m-1}}{(\omega-\alpha-\,\cdot)^{m}}}_{L^1((-\infty,0))} = \frac{m}{m-1} \leq 2,
$$ 
the above calculation shows that the set
\[
\big\{(\omega-\alpha)^{m-1}\displaystyle\int_{\R}\epsilon(t)R(\alpha+it,A)^{m}dt
\, :\, \epsilon : \R \rightarrow \overline{\mathbb{D}}\text{ measurable},\, \alpha < \omega \big\}  
\]
is included in $\Gamma$, hence is $W^*\gamma$-bounded. Hence, by Lemma \ref{equiWGFS}, 
the operator $A$ has  property $(W\gamma\text{-}GFS)_{m-1,\omega}$. 
\end{proof}

We recalled that $(GFS)_{m,\omega}$ is equivalent 
to strong $m$-bounded functional calculus of type $\omega.$  The following theorem 
provides a similar statement in the context of $W\gamma$-boundedness.

\begin{thm}\label{thmeqWgamma}
The following assertions are equivalent for $m\geq 1$:
\begin{enumerate}[label = (\roman*)]
\item  $A$ has $(W\gamma\text{-}GFS)_{m,\omega}$;
\item $A$ has a strong $W\gamma$-$1$-bounded functional calculus of type $\omega$;
\item $A$ has a strong $W\gamma$-$m$-bounded functional calculus of type $\omega$.
\end{enumerate}

Moreover, if $A$ satisfies these conditions, then $-A$ generates a $C_0$-semigroup of $W\gamma$-type $\omega$.
\end{thm}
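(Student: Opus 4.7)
The plan rests on two pillars: the Cauchy integral formula
$$
f^{(m)}(A)=\frac{m!}{2\pi}\int_{\R}f(\alpha+it)R(\alpha+it,A)^{m+1}\,dt,\qquad f\in\mathcal{E}(R_\delta),\ \delta<\alpha<\omega,
$$
obtained by differentiating (\ref{fofA}) $m$ times using $\partial_\lambda R(\lambda,A)=-R(\lambda,A)^2$, and the reformulation of $(W\gamma\text{-}GFS)_{m,\omega}$ as $W^*\gamma$-boundedness of the set (\ref{Set}) provided by Lemma \ref{equiWGFS}.

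The implication (i) $\Rightarrow$ (iii) is a direct consequence of the integral formula. For $\alpha_k<\omega$, $f_k\in\mathcal{E}(R_{\delta_k})$ with $\|f_k\|_{H^\infty(R_{\alpha_k})}\le 1$, $x_k\in X$ and $y_k\in X^*$,
$$
\sum_k|\langle(\omega-\alpha_k)^m f_k^{(m)}(A)x_k,y_k\rangle|\le\frac{m!}{2\pi}\sum_k\int_{\R}|\langle(\omega-\alpha_k)^m R(\alpha_k+it,A)^{m+1}x_k,y_k\rangle|\,dt,
$$
and (i) controls the right-hand side by $C\|\sum_k\gamma_k\otimes x_k\|_{G(X)}\|\sum_k\gamma_k\otimes y_k\|_{G(X^*)}$. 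Remark \ref{rq1} 1) then extends the resulting $W\gamma$-bound from $\mathcal{E}$ to all $H^\infty(R_\alpha)$, giving (iii).

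The converse (iii) $\Rightarrow$ (i) is the technical heart of the theorem and the step I expect to require the most work. First, (iii) entails strong $m$-boundedness, hence $(GFS)_{m,\omega}$ by \cite[Theorem 6.4]{bat-haa}, so each $h_k(t):=\langle R(\alpha_k+it,A)^{m+1}x_k,y_k\rangle$ lies in $L^1(\R)$. Moreover, since $z\mapsto\langle R(z,A)^{m+1}x_k,y_k\rangle$ is holomorphic on $\{Re(z)<\omega\}$, the trace $h_k$ on $Re(z)=\alpha_k$ is the boundary value of an analytic $L^1$ function on the half-plane complementary to $R_{\alpha_k}$. The crux is then to produce, for each $k$, a function $f_k\in H^\infty(R_{\alpha_k})$ with $\|f_k\|_{H^\infty(R_{\alpha_k})}\le 1$ realizing $\int_\R f_k(\alpha_k+it)h_k(t)\,dt=\int_\R|h_k(t)|\,dt$; this is done via a Hardy-space / inner-outer factorization argument exploiting the analytic extension of $h_k$. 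Substituting the integral representation and summing over $k$ converts the desired $(W\gamma\text{-}GFS)_{m,\omega}$ inequality into the sum of pairings $\sum_k\langle(\omega-\alpha_k)^m f_k^{(m)}(A)x_k,y_k\rangle$, which is controlled by (iii).

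Finally, (i) $\Leftrightarrow$ (ii) follows by applying the above arguments with $m=1$ together with Proposition \ref{propPm} (which gives (i) $\Rightarrow$ $(W\gamma\text{-}GFS)_{1,\omega}$); while (ii) $\Rightarrow$ (iii) is obtained by iteration: for $f\in H^\infty(R_\alpha)$ with $\|f\|_\infty\le 1$, apply strong $W\gamma$-$1$-boundedness to $g=f^{(m-1)}$, which by \cite[Lemma 5.4]{bat-haa} lies in $H^\infty(R_\beta)$ with norm at most $K(\omega-\alpha)^{-(m-1)}$ for $\beta=(\alpha+\omega)/2$, so that the $W\gamma$-bound on $(\omega-\beta)g'(A)$ propagates to the required bound on $(\omega-\alpha)^m f^{(m)}(A)$. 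For the generation statement, applying (iii) to the family $f_t(z)=e^{-tz}$, $t>0$, with $\alpha=\omega-1/t$ gives (after normalizing to $\|f_t\|_{H^\infty(R_\alpha)}=1$) that $\{e^{\omega t}T_t:t\ge 0\}$ is $W\gamma$-bounded, where $T_t=e^{-tA}$ is the $C_0$-semigroup generated by $-A$ thanks to Proposition \ref{propexp} and the non-$\gamma$ strong $m$-bounded consequence of (iii).
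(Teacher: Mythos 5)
Your implications (i) $\Rightarrow$ (iii), (ii) $\Rightarrow$ (iii) and the derivation of the generation statement are sound and essentially parallel to the paper's arguments, but the step (iii) $\Rightarrow$ (i) -- which you rightly call the crux -- has a genuine gap. You need, for each $k$, a function $f_k$ in the unit ball of $H^{\infty}(R_{\alpha_k})$ with $\int_{\R} f_k(\alpha_k+it)h_k(t)\,dt=\int_{\R}|h_k(t)|\,dt$, and you justify its existence by inner--outer factorization of the analytic extension of $h_k$ to $\{\mathrm{Re}\,z<\alpha_k\}$. This does not work: writing that extension as $I\cdot O$ (inner times outer), the required multiplier on the line is $\overline{I}\cdot\overline{O}/|O|$; the reflection of $I$ is indeed a contractive analytic function on $R_{\alpha_k}$, but the conjugate outer phase $\overline{O}/|O|$ is in general not the boundary value of any bounded analytic function on the right half-plane, so the exact extremal does not exist. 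Even the weaker claim that the supremum of $|\int f(\alpha_k+it)h_k(t)dt|$ over the unit ball of $H^{\infty}(R_{\alpha_k})$ is comparable to $\norme{h_k}_{L^1}$ is false for general $L^1$ traces of Hardy-class functions of the complementary half-plane: by duality that supremum is the $L^1$-distance from $h_k$ to the preannihilator of $H^{\infty}(R_{\alpha_k})$, and uniform comparability would force every continuous functional on $H^1$ (i.e.\ every BMO element, by Fefferman duality) to be represented by an $H^{\infty}$ function, which fails; your sketch uses nothing about the resolvent structure of $h_k$ to evade this. A second, independent, problem: even granting such an $f_k$, it is analytic only on $R_{\alpha_k}$ and has no decay, while the representation formula you quoted requires $f\in\mathcal{E}(R_{\delta})$ with $\delta<\alpha_k$ (analyticity across the contour plus decay), so ``substituting the integral representation'' along the critical line $\mathrm{Re}\,z=\alpha_k$ is not justified.

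The paper's proof avoids both obstacles by following \cite[Theorem 5.6 (b)]{bat-haa}: one takes a merely \emph{measurable} phase $\epsilon_k:\R\to\overline{\mathbb{D}}$ with $\epsilon_k(t)\langle R(\alpha_k+it,A)^{m+2}x_k,y_k\rangle=|\langle R(\alpha_k+it,A)^{m+2}x_k,y_k\rangle|$ and builds the explicit test functions $\phi_{k,R}(z)=\frac{\beta_k-\alpha_k}{\pi}\int_{-R}^{R}\epsilon_k(t)(\alpha_k+it-z)^{-2}dt$ for $\alpha_k<\beta_k<\omega$. These lie in $\mathcal{E}$, satisfy $\norme{\phi_{k,R}}_{H^{\infty}(R_{\beta_k})}\leq 1$ by a Poisson-kernel estimate, and their $m$-th derivative calculus is computed by Fubini to be $\frac{(\beta_k-\alpha_k)(m+1)!}{\pi}\int_{-R}^{R}\epsilon_k(t)R(\alpha_k+it,A)^{m+2}dt$; applying (iii) to the $\phi_{k,R}$, letting $R\to\infty$ and choosing $\beta_k=\frac{\alpha_k+\omega}{2}$ yields $(W\gamma\text{-}GFS)_{m+1,\omega}$ -- the resolvent power $m+2$ is forced by the squared Cauchy kernel needed for integrability -- and Proposition \ref{propPm} then descends to $(W\gamma\text{-}GFS)_{m,\omega}$. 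If you replace your Hardy-space duality step by this construction (and keep Proposition \ref{propPm} at the end, since the direct output is the exponent $m+1$, not $m$), the rest of your outline goes through.
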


\begin{proof}
\ 

$(i) \Rightarrow (ii)$: First, by Proposition \ref{propPm}, $A$ has $(W\gamma\text{-}GFS)_{1,\omega}$.

Let $\alpha_1,\ldots,\alpha_N <\omega$, 
$\delta_1<\alpha_1,\ldots,\delta_N<\alpha_N$, and $f_1 \in \mathcal{E}(R_{\delta_1}),\ldots,f_N\in 
\mathcal{E}(R_{\delta_N})$ with $\norme{f_k}_{H^{\infty}(R_{\alpha_k})}\leq 1$. Let 
$x_1,\ldots,x_N \in X$ and $y_1,\ldots,y_N \in X^*$. It follows from the proof of 
\cite[Theorem 5.6 (a)]{bat-haa}) that for any $k=1,\ldots,N$,
\[
f'_k(A) = \frac{-1}{2\pi}\int_{\R}f(\alpha_k + it)R(\alpha_k+it,A)^{2}dt.
\]
Consequently,
\begin{align*}
&\sum_{k=1}^N |\langle(\omega-\alpha_k)f'_k(A)x_k,y_k\rangle| \\
&= \sum_{k=1}^N \big|\bigl\langle\frac{-1}{2\pi}\Bigl(\int_{\R}(\omega-\alpha_k)
f_k(\alpha_k + it)R(\alpha_k+it,A)^{2} dt\Bigr) x_k,y_k\bigr\rangle\big| \\
&\leq  \frac{1}{2\pi} \sum_{k=1}^N \int_{\R} (\omega-\alpha_k)\vert f_k(\alpha_k +it)\vert 
\vert \langle R(\alpha_k+it,A)^{2}x_k,y_k\rangle\vert dt\\
&\leq  \frac{1}{2\pi}\sum_{k=1}^N \norme{f_k}_{H^{\infty}(R_{\alpha_k})}
\int_{\R}\big|\langle(\omega-\alpha_k)R(\alpha_k+it,A)^{2}x_k,y_k\rangle\big|dt.  \\
\end{align*}
Since $\norme{f_k}_{H^{\infty}(R_{\alpha_k})} \leq 1$ and $A$ has $(W\gamma\text{-}GFS)_{1,\omega}$,  
this yields an estimate 
\[	
\sum_{k=1}^N |\langle(\omega-\alpha_k)f'_k(A)x_k,y_k\rangle| 
\leq C\norme{\sum_{k=1}^N\gamma_k\otimes x_k}_{G(X)}\norme{\sum_{k=1}^N\gamma_k\otimes y_k}_{G(X^*)}.
\]
According to Remark \ref{rq1} (1),
this implies that $A$ has a strong $W\gamma$-1-bounded functional calculus of type $\omega$.
	 

\smallskip
$(ii) \Rightarrow (iii)$: It follows from the assumption that the set 
\[
\Delta := \underset{\alpha <\beta < \omega }\bigcup
\big\{(\omega - \beta)g^{(m)}(A)\, :\, g \in H^{\infty}(R_{\alpha}),\, 
\norme{g^{(m-1)}}_{H^{\infty}(R_{\beta})} \leq 1 \big\}
\]
is $W\gamma$-bounded. For any $\alpha <\beta < \omega$ and $f \in H^{\infty}(R_{\alpha})$
with $\norme{f}_{H^{\infty}(R_{\alpha})} \leq 1$, we have
$$
\norme{\frac{(\beta - \alpha)^{m-1}}{(m-1)!}f^{(m-1)}}_{H^{\infty}(R_{\beta})} \leq 1
$$
by \cite[Lemma 5.4]{bat-haa}. Hence
\[
\underset{\alpha<\beta<\omega}\bigcup\bigg\{\frac{(\omega - \beta)
(\beta - \alpha)^{m-1}}{(m-1)!}f^{(m)}(A), \;f \in  H^{\infty}(R_{\alpha}), 
\norme{f}_{H^{\infty}(R_{\alpha})}\leq 1 \bigg\} \subset \Delta. 
\]
Taking $\beta = \frac{\alpha + \omega}{2}$ in the above set, we obtain 
\[
\underset{\alpha < \omega}\bigcup\bigg\{\frac{(\omega - \alpha)^m}{2^m (m-1)!}f^{(m)}(A)
\, :\, f \in  H^{\infty}(R_{\alpha}), \norme{f}_{H^{\infty}(R_{\alpha})} \leq 1 \bigg\} \subset \Delta.
\]
Hence the above is $W\gamma$-bounded.
Thus $A$ has  a strong $W\gamma$-$m$-bounded functional calculus of type $\omega$.


\smallskip
$(iii) \Rightarrow (i)$: Let $\alpha_1,\ldots,\alpha_N< \omega$, 
$x_1, \ldots, x_N \in X$ and  $y_1,\ldots, y_N \in X^{*}$. Arguing as in
the proof of \cite[Theorem 5.6 (b)]{bat-haa}, we consider $\beta_k\in(\alpha_k,w)$ for 
any $k=1,\ldots, N$ and introduce measurable functions
$\epsilon_1, ..., \epsilon_N : \R \to \overline{\mathbb{D}}$ such that  
\[
|\langle R(\alpha_k+it,A)^{m+2}x_k,y_k\rangle| = 	\langle R(\alpha_k+it,A)^{m+2}x_k,y_k \rangle \epsilon_k(t)
\]
for all $k=1,\ldots, N$ and all $t \in \R$. Next for any $R>0$ and any $Re(z) > \beta_k$, we set
\[
\phi_{k,R}(z) := \frac{\beta_k -\alpha_k}{\pi}\int_{-R}^{R}\frac{\epsilon_k(t)}{(\alpha_k+it-z)^2}dt
\]
Since $R$ is finite, it is easy to show that $\phi_{k,R}(z) = O(|z|^{-2})$ as $|z| \rightarrow \infty$, and hence
$\phi_{k,R} \in \mathcal{E}(R_{\alpha})$. Furthermore, it follows from
the proof of \cite[Theorem 5.6 (b)]{bat-haa}
that $\norme{\phi_{k,R}}_{H^{\infty}(R_{\beta_k})} \leq 1$ and
\[
\phi_k^{(m)}(A) = \int_{-R}^{R} \frac{\beta_k -\alpha_k}{\pi}(m+1)!\epsilon_k(t)R(\alpha_k +it,A)^{m+2}dt.
\]
It therefore follows from $(iii)$ that we have an estimate
\begin{align*}
\sum_{k=1}^{N} & \int_{-R}^{R}\big|\langle \frac{(\beta_k -\alpha_k)(\omega - \beta_k)^m}{\pi}(m+1)! R(\alpha_k+it,A)^{m+2}x_k,y_k \rangle\big|dt \\
& = \sum_{k=1}^{N} \big|  \big\langle (\omega - \beta_k)^m\phi_{k,R}^{(m)}(A)x_k,y_k \big\rangle \big| \\
& \leq C\norme{\sum_{k=1}^N\gamma_k\otimes x_k}_{G(X)}\norme{\sum_{k=1}^N\gamma_k\otimes y_k}_{G(X^*)}.
\end{align*}
Passing to the limit when $R\to\infty$, one obtains
\begin{align*}
&\sum_{k=1}^{N}  \int_{\R}\big|\langle \frac{(\beta_k -\alpha_k)(\omega - \beta_k)^m}{\pi}(m+1)! 
R(\alpha_k+it,A)^{m+2}x_k,y_k \rangle\big|dt \\
&	\leq C\norme{\sum_{k=1}^N\gamma_k\otimes x_k}_{G(X)}\norme{\sum_{k=1}^N\gamma_k\otimes y_k}_{G(X^*)}.
\end{align*}
Now we choose $\beta_k =  \frac{\omega + \alpha_k}{2}$ in the above estimate.
We obtain the following inequality
\begin{align*}
&\sum_{k=1}^{N} \int_{\R}|\langle (\omega -\alpha_k)^{m+1}
R(\alpha_k+it,A)^{m+2}x_k,y_k \rangle|dt \\
&\leq \frac{2^{m+1}C\pi}{(m+1)!}\norme{\sum_{k=1}^N\gamma_k\otimes x_k}_{G(X)}
\norme{\sum_{k=1}^N\gamma_k\otimes y_k}_{G(X^*)}.
\end{align*}	
This shows that $A$ has $(W\gamma\text{-}GFS)_{m+1,\omega}$.
Then by Proposition \ref{propPm}, $A$ has $(W\gamma\text{-}GFS)_{m,\omega}$.

\smallskip
Finally, assume that $(ii)$ holds true. 
In particular $A$ has a strong $m$-1-bounded functional calculus of type $\omega$ hence 
by Theorem \cite[Theorem 6.4]{bat-haa}, $-A$ generates a $C_0$-semigroup $(T_t)_{t\geq 0}$ of type $\omega$. 

For any $t\geq 0$ and $\alpha <\omega $, let $\phi_{\alpha,t}(z) = e^{-tz}e^{t\alpha}$ for $z\in R_\alpha$.
Then $\norme{\phi_{\alpha,t}}_{H^{\infty}(R_{\alpha})} = 1$. Hence 
by $(ii)$, the set 
\[
\displaystyle\underset{\alpha <\omega}\bigcup\big\{ (\omega-\alpha)\phi'_{\alpha,t}(A)
\, :\, t\geq 0 \big\} \text{ is W$\gamma$-bounded}.
\]	 
Therefore the set 
\[
\displaystyle\underset{\alpha < \omega}\bigcup\big\{ (\omega-\alpha)(-t)e^{-tA}e^{t\alpha}
\, :\, t> 0 \big\} \text{ is $W\gamma$-bounded}. 
\]
We noticed in Proposition \ref{propexp} that $e^{-tA}=T_t$ for any $t\geq 0$. 
Hence taking $\alpha = \omega-\frac{1}{t}$ for any $t>0$, we deduce that the set
\[
\{e^{\omega t}T_t \, :\, t\geq 0\} \text { is $W\gamma$-bounded.}
\]
Hence $(T_t)_{t\geq 0}$ is a $C_0$-semigroup of $W\gamma$-type $\omega$.
\end{proof}

\begin{rq1}\label{Add}
The above proof shows as well that 
if $A$ has a strong $\gamma$-1-bounded functional calculus of type $\omega$,
then $-A$ generates a $C_0$-semigroup of $\gamma$-type $\omega$.
\end{rq1}

In the $K$-convex case, Theorem \ref{thmeqWgamma} can be strengthened as follows.

\begin{thm}\label{thmstrWgammaeq}
Assume that $X$ is $K$-convex and let $m\geq 1$ be an integer. The following assertions are equivalent:
\begin{enumerate}[label = (\roman*)]
\item  
There exist a norming subspace $Z\subset X^*$ and a constant $C>0$
such that for any $N \in \N$, for any $\alpha_1, \ldots , \alpha_N < \omega$,
for any $x_1, \ldots ,x_N  \in  X$ and for any $y_1, \ldots , y_N \in Z$,
\[	 
\sum_{k=1}^{N}\int_{\R}|\langle (\omega-\alpha_k)^m R(\alpha_k+it,A)^{m+1}x_k,y_k \rangle| dt 
\leq C\norme{\sum_{k=1}^N\gamma_k\otimes x_k}_{G(X)}\norme{\sum_{k=1}^N\gamma_k\otimes y_k}_{G(X^*)}.
\]
\item 
$A$ has a strong $\gamma$-$1$-bounded functional calculus of type $\omega$;
\item 
$A$ has a strong $\gamma$-$m$-bounded functional calculus of type $\omega$.
\end{enumerate}
\end{thm}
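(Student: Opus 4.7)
The plan is to first dispatch the ``routine'' equivalences and then focus on the main new implication $(i)\Rightarrow(iii)$. The easy part rests on the fact, recalled after \eqref{inekconv*}, that for $K$-convex $X$ any subset of $\mathcal{L}(X)$ is $\gamma$-bounded iff $W\gamma$-bounded. Hence a strong $\gamma$-$m$-bounded functional calculus coincides with a strong $W\gamma$-$m$-bounded functional calculus, and Theorem~\ref{thmeqWgamma} then gives that $(ii)$ and $(iii)$ are both equivalent to $A$ having $(W\gamma\text{-}GFS)_{m,\omega}$. Since $(W\gamma\text{-}GFS)_{m,\omega}$ is precisely condition~$(i)$ with the special choice $Z=X^{*}$, this also immediately provides $(iii)\Rightarrow(i)$.

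For $(i)\Rightarrow(iii)$ I would mimic the beginning of the proof of $(i)\Rightarrow(ii)$ in Theorem~\ref{thmeqWgamma}. For $\delta<\alpha<\omega$ and any $f\in\mathcal{E}(R_{\delta})$ with $\norme{f}_{H^{\infty}(R_{\alpha})}\leq 1$, a standard iteration of the Dunford integral via $\frac{d}{dz}R(z,A)=-R(z,A)^{2}$ produces a formula for $f^{(m)}(A)$ as an absolutely convergent integral of $f(\alpha+it)R(\alpha+it,A)^{m+1}$ against $dt$ with a universal constant in front; the $m=1$ case already appears in the proof of Theorem~\ref{thmeqWgamma}. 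Pulling the duality bracket inside the integral, bounding $|f|\leq 1$, and applying hypothesis~$(i)$ yields, for any $\alpha_1,\ldots,\alpha_N<\omega$, any $x_k\in X$ and any $y_k\in Z$,
\[
\sum_{k=1}^{N}\bigl|\bigl\langle(\omega-\alpha_k)^{m}f_k^{(m)}(A)x_k,y_k\bigr\rangle\bigr|\,\leq\, C'\,\Big\|\sum_{k}\gamma_k\otimes x_k\Big\|_{G(X)}\Big\|\sum_{k}\gamma_k\otimes y_k\Big\|_{G(X^{*})}.
\]
By Remark~\ref{rq1}~(1), proving $(iii)$ thus reduces to upgrading this $Z$-restricted estimate to genuine $\gamma$-boundedness of the family $T_k:=(\omega-\alpha_k)^{m}f_k^{(m)}(A)\in\mathcal{L}(X)$.

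This upgrade is the heart of the argument. Let $j:X\hookrightarrow Z^{*}$ be the canonical embedding induced by the norming property of $Z$, which is bounded below by the norming constant. Since for $y_k\in Z$ the norms $\norme{\sum\gamma_k\otimes y_k}_{G(X^{*})}$ and $\norme{\sum\gamma_k\otimes y_k}_{G(Z)}$ coincide, the previous display says exactly that $\{j\circ T_k\}\subset\mathcal{L}(X,Z^{*})$ is $W^{*}\gamma$-bounded. Now $Z$ is a closed subspace of the $K$-convex space $X^{*}$ and is hence itself $K$-convex, so the equivalence recalled after~\eqref{inekconv*} promotes this to $\gamma$-boundedness of $\{j\circ T_k\}$ in $\mathcal{L}(X,Z^{*})$. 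Integrating the reverse norming inequality $\norme{x}_{X}\leq c^{-1}\norme{jx}_{Z^{*}}$ against the Gaussian combinations then yields $\norme{\sum\gamma_k\otimes T_kx_k}_{G(X)}\leq c^{-1}\norme{\sum\gamma_k\otimes jT_kx_k}_{G(Z^{*})}$, which transfers the $\gamma$-bound back to $\mathcal{L}(X)$ and establishes~$(iii)$. The main obstacle is precisely this transfer step: correctly identifying the $Z$-restricted $W\gamma$-estimate in~$(i)$ with a $W^{*}\gamma$-bound in $\mathcal{L}(X,Z^{*})$, and then combining the $K$-convexity of $Z$ with the norming property to conclude genuine $\gamma$-boundedness in $\mathcal{L}(X)$.
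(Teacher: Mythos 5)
Your proof is correct, and its skeleton is the paper's: you obtain $(ii)\Leftrightarrow(iii)$ and $(iii)\Rightarrow(i)$ exactly as the paper does, from Theorem \ref{thmeqWgamma} plus the coincidence of $\gamma$- and $W\gamma$-boundedness on $K$-convex spaces, and for the hard implication you first convert hypothesis $(i)$ into a weak estimate for derivative operators paired against elements of $Z$ and then use $K$-convexity to upgrade it to a Gaussian-norm bound. The two genuine differences are in how you implement that hard step, and both are legitimate. First, you integrate the hypothesis directly against the $m$-th derivative formula $f^{(m)}(A)=c_m\int_{\R}f(\alpha+it)R(\alpha+it,A)^{m+1}\,dt$ for $f\in\mathcal{E}(R_{\delta})$, landing on $(iii)$; the paper instead aims at $(ii)$ via the formula for $f'(A)$, which tacitly requires the $Z$-restricted analogue of Proposition \ref{propPm} to lower the resolvent power from $m+1$ to $2$ (this is what is hidden in the phrase ``the argument in the proof of Theorem \ref{thmeqWgamma}''), so your route sidesteps a small implicit step. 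Second, the paper performs the upgrade by quoting the norming-subspace form of the $K$-convexity duality, namely \cite[Corollary 7.4.6]{hnvw} in the form \eqref{Kconv_norming_ine}; you instead re-derive that tool: the $Z$-restricted estimate is exactly a $W^{*}\gamma$-bound for $\{j\circ T\}\subset\mathcal{L}(X,Z^{*})$ (using that the $G(Z)$- and $G(X^{*})$-norms agree on $Z$-valued sums), $K$-convexity of the closed subspace $Z\subset X^{*}$ together with the equivalence recalled after \eqref{inekconv*} upgrades this to a $\gamma$-bound in $\mathcal{L}(X,Z^{*})$, and the norming embedding $j:X\to Z^{*}$, being bounded below, transfers the bound back to $G(X)$ pointwise on Gaussian sums. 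This yields the same inequality with constant $c^{-1}K'C$ in place of the paper's $MC$, and has the advantage of relying only on facts already recorded in the preliminaries rather than on an extra citation.
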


\begin{proof}
The proofs of $(ii) \Leftrightarrow (iii)$ and $(iii) \Rightarrow (i)$ are obvious by the 
equivalence of $W\gamma$-boundedness and $\gamma$-boundedness on a $K$-convex space, and
Theorem \ref{thmeqWgamma}.

Now assume $(i)$. By \cite[Corollary 7.4.6.]{hnvw}, there exists $M>0$ 
such that for all $N \in \N$ and $x_1, \ldots, x_N\in X$ , we have
\begin{equation}\label{Kconv_norming_ine}
\norme{\sum_{k=1}^N \gamma_k \otimes x_k}_{G(X)} \leq M 
\sup\bigg\{ \big| \sum_{k=1}^N\langle x_k, y_k \rangle 
\big|, \; y_k\in Z,\, \norme{\sum_{k=1}^N \gamma_k \otimes y_k }_{G(X^*)} \leq 1 \bigg\}.
\end{equation}
The argument in the proof of Theorem \ref{thmeqWgamma}
shows that for all $\alpha_1,\ldots,\alpha_N <\omega$, $\delta_1<\alpha_1,\ldots,\delta_N<\alpha_N$, 
$f_1 \in \mathcal{E}(R_{\delta_1}),\ldots,f_N\in \mathcal{E}(R_{\delta_N})$ with  $\norme{f_k}_{H^{\infty}(R_{\alpha_k})}
\leq 1$, $x_1,\ldots,x_N \in X$ and $y_1,\ldots,y_N \in Z$, we have an estimate
\[	
\sum_{k=1}^N |\langle(\omega-\alpha_k)f'_k(A)x_k,y_k\rangle| 
\leq C\norme{\sum_{k=1}^N\gamma_k\otimes x_k}_{G(X)}\norme{\sum_{k=1}^N\gamma_k\otimes y_k}_{G(X^*)}.
\]
Applying \eqref{Kconv_norming_ine}, this implies
$$
\norme{\sum_{k=1}^N \gamma_n \otimes (\omega-\alpha_k)f'_k(A)x_k}_{G(X)} 
\leq MC\norme{\sum_{k=1}^N\gamma_k\otimes x_k}_{G(X)}.
$$
Hence $A$ has a $\gamma$-$1$-bounded functional calculus of type $\omega$.
\end{proof}


\section{A SHI-FENG-GOMILKO THEOREM ON $K$-CONVEX SPACES}
Let $(\Omega,\mu)$ be a measure space, let $N \in \N$ and let $(e_1,\ldots, e_N)$ 
denote the canonical basis of $l^{2}_N$. Let $l^{2}_N \overset{2}\otimes L^2(\Omega)$
denote the Hilbert space tensor product of $l^2_N$ and $L^2(\Omega)$. With the
notation $\N_N = \{1, \ldots, N \}$,
we have a unitary isomorphism
$$
l^{2}_N \overset{2}\otimes L^2(\Omega) = L^2(\Omega\times \N_N).
$$
Let $X$ be a Banach space.
For any bounded operator $u : l^{2}_N \overset{2}\otimes L^2(\Omega) \longrightarrow X$, let 
$u_k : L^{2}(\Omega) \rightarrow X$ be defined by $u_k(f) = u(e_k \otimes f)$, for any $k=1, \ldots, N$. 
Then the mapping $u  \mapsto \sum_ke_k\otimes u_k$ induces an algebraic isomorphism 
\[
\mathcal{L}(l^{2}_N \overset{2}\otimes L^2(\Omega), X) \simeq l^{2}_N\otimes \mathcal{L}(L^2(\Omega),X).
\]
It is easy to check that $u : l^2_N \otimes L^2(\Omega) \rightarrow X$ belongs 
to $\gamma(L^2(\Omega\times \N_N);X)$ if and only if $u_k$ belongs to $\gamma(L^2(\Omega);X)$ 
for any $k = 1, \ldots, N$. This leads to an algebraic isomorphism
\[
\gamma(L^2(\Omega\times \N_N);X) \simeq l^2_N \otimes \gamma(L^2(\Omega );X).
\]
Likewise, a function $f : \Omega \times \N_N \rightarrow X$ belongs to 
$\gamma(\Omega \times \N_N ; X)$ if and only if $f(\cdot,k)$ belongs to $\gamma(\Omega  ; X)$ for any $k = 1, \ldots, N$.

Recall the sun dual $X^{\odot}$ from Subsection \ref{Sun}. 
We now state and prove the main result of this article.

\begin{thm}\label{shifengRthm}
Let $X$ be a $K$-convex Banach space and let $m\geq 1$ be an integer. Let $A$
be an operator of half-plane type $\omega$ on $X$.
The following assertions are equivalent: 
\begin{enumerate}[label = (\roman*)]
\item 
$-A$ generates a $C_0$-semigroup $(T_t)_{t\geq0}$ of $\gamma$-type $\omega$;
\item 
There exists a constant $C>0$ such that for all $N\in \N$, 
for all $x_1,\ldots,x_N \in X$,  for all $y_1,\ldots,y_N \in \overline{dom(A^*)}= X^{\odot}$, and 
for all $\alpha_1, \ldots, \alpha_N < \omega$,
the functions 
$(t,k) \mapsto \sqrt{\omega-\alpha_k}R(\alpha_k + it, A)x_k$ and 
$(t,k) \mapsto \sqrt{\omega-\alpha_k}R(\alpha_k + it, A^*)y_k$ 
belong to $\gamma(\R\times\N_N;X)$ and $\gamma(\R\times\N_N;X^{*})$, respectively, and satisfy
\begin{equation}\label{ine1}
\norme{(t,k) \mapsto \sqrt{\omega-\alpha_k}R(\alpha_k + it, A)x_k}_{\gamma(\R\times\N_N;X)} \leq 
C\norme{\sum_{k=1}^{N}{\gamma_k\otimes x_k}}_{G(X)}
\end{equation}
and 
\begin{equation}
\label{ine1*}
\norme{(t,k) \mapsto \sqrt{\omega-\alpha_k}R(\alpha_k + it, A^*)y_k}_{\gamma(\R\times\N_N;X^{*})} 
\leq C\norme{\sum_{k=1}^{N}{\gamma_k\otimes y_k}}_{G(X^{*})};
\end{equation}
\item 
$A$ has a strong $\gamma$-$1$-bounded functional calculus of type $\omega$;
\item 
$A$ has a strong $\gamma$-$m$-bounded functional calculus of type $\omega$.
\end{enumerate}
\end{thm}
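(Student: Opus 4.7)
The plan is a cyclic argument $(i)\Rightarrow(ii)\Rightarrow(iii)\Rightarrow(i)$, together with the equivalence $(iii)\Leftrightarrow(iv)$ already supplied by Theorem \ref{thmstrWgammaeq}; the implication $(iii)\Rightarrow(i)$ is precisely Remark \ref{Add}. The substantive work is therefore to prove $(i)\Rightarrow(ii)$ and $(ii)\Rightarrow(iii)$.

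For $(i)\Rightarrow(ii)$, the Laplace representation
\[
R(\alpha_k+it,A)x_k \,=\, -\int_0^\infty e^{(\alpha_k+it)s}T_s x_k\,ds,\qquad \alpha_k<\omega,
\]
identifies $t\mapsto R(\alpha_k+it,A)x_k$, up to reflection and sign, with the Fourier transform of $g_k(s):=\mathbf{1}_{s>0}e^{\alpha_k s}T_s x_k$. Writing $g_k(s)=M(s)\bigl[\mathbf{1}_{s>0}e^{(\alpha_k-\omega)s}x_k\bigr]$ with $M(s):=\mathbf{1}_{s>0}e^{\omega s}T_s$, the hypothesis that $\{M(s):s\geq 0\}$ is $\gamma$-bounded combined with the $\gamma$-multiplier theorem (Theorem \ref{multithm}) reduces the task to controlling the $\gamma(\R\times\N_N;X)$-norm of the building block $\psi(s,k):=\sqrt{\omega-\alpha_k}\,\mathbf{1}_{s>0}e^{(\alpha_k-\omega)s}x_k$. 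Via the algebraic isomorphism $\gamma(\R\times\N_N;X)\simeq l^2_N\otimes\gamma(\R;X)$, the operator $\mathbb{I}_\psi$ has rank at most $N$ and is diagonalised by the slice-supported orthonormal family $\bigl(\sqrt{2}\,\psi(\cdot,k)/\sqrt{\omega-\alpha_k}\otimes e_k\bigr)_k$, yielding $\norme{\psi}_{\gamma(\R\times\N_N;X)}=\tfrac{1}{\sqrt 2}\norme{\sum_k\gamma_k\otimes x_k}_{G(X)}$. Extending the Fourier--Plancherel identity of Lemma \ref{fourierlem} to the product measure space by tensoring with $I_{l^2_N}$ via Theorem \ref{extensionth} then delivers \eqref{ine1}. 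For \eqref{ine1*}, I pass to the sun dual: since $X$ (hence $X^*$) is $K$-convex, adjoint-invariance of $W\gamma$-boundedness combined with the equivalence $W\gamma=\gamma$ on $K$-convex targets shows that $\{T_t^*\}$ is $\gamma$-bounded in $\mathcal{L}(X^*)$, and its invariant restriction $(T^{\odot}_t)$ is $\gamma$-bounded in $\mathcal{L}(X^{\odot})$; by Theorem \ref{thmsun} the resolvents of $A^*$ and of the negative generator $A^{\odot}$ coincide on $X^{\odot}$, so the preceding Laplace--Fourier argument applied to the $C_0$-semigroup $(T^{\odot}_t)$ on $X^{\odot}$ yields the desired bound in $\gamma(\R\times\N_N;X^{\odot})$, which embeds isometrically into $\gamma(\R\times\N_N;X^*)$ and gives \eqref{ine1*}.

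For $(ii)\Rightarrow(iii)$, I verify hypothesis $(i)$ of Theorem \ref{thmstrWgammaeq} with $m=1$ and norming subspace $Z=X^{\odot}$ (which is norming by Theorem \ref{thmsun}(2)). Using $R(\alpha_k+it,A)^*=R(\alpha_k-it,A^*)$, one factors
\[
\langle R(\alpha_k+it,A)^2 x_k,y_k\rangle \,=\, \langle R(\alpha_k+it,A)x_k,\,R(\alpha_k-it,A^*)y_k\rangle,
\]
and then the $\gamma$-H\"older inequality (Theorem \ref{inetrace}) on $S=\R\times\N_N$, applied to $f(t,k):=\sqrt{\omega-\alpha_k}R(\alpha_k+it,A)x_k$ and $g(t,k):=\sqrt{\omega-\alpha_k}R(\alpha_k-it,A^*)y_k$, reproduces exactly the required bilinear expression. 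Hypothesis (ii) controls $\norme{f}_{\gamma(\R\times\N_N;X)}$ directly and $\norme{g}_{\gamma(\R\times\N_N;X^*)}$ after the $\gamma$-isometric substitution $t\mapsto-t$ on the $A^*$ side; Theorem \ref{thmstrWgammaeq} then delivers (iii).

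I expect the main technical obstacle to be the explicit $\gamma$-norm computation on the product space $\R\times\N_N$ (the rank-one diagonalisation of $\mathbb{I}_\psi$) and the careful extension of Lemma \ref{fourierlem} to this product setting via Theorem \ref{extensionth}; the sun-dual identifications, the $\gamma$-multiplier theorem and the $\gamma$-H\"older inequality are then applied routinely using the preliminaries of Section 2.
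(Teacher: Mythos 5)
Your proposal is correct and follows essentially the same route as the paper: the multiplier theorem combined with the Laplace representation and the Fourier--Plancherel lemma (extended to $\R\times\N_N$) for $(i)\Rightarrow(ii)$, the sun dual $X^{\odot}$ and $K$-convexity for the adjoint estimate \eqref{ine1*}, the $\gamma$-H\"older inequality together with Theorem \ref{thmstrWgammaeq} applied with the norming subspace $Z=X^{\odot}$ for $(ii)\Rightarrow(iii)$, and Theorems \ref{thmeqWgamma}/\ref{thmstrWgammaeq} (equivalently Remark \ref{Add}) for the remaining implications. The only minor discrepancy is your identity $R(\alpha_k+it,A)^{*}=R(\alpha_k-it,A^{*})$: with the bilinear duality used in the paper (cf.\ Theorem \ref{thmsun}) one has $R(\lambda,A)^{*}=R(\lambda,A^{*})$ with no reflection, but since $t\mapsto -t$ is a $\gamma$-isometry this does not affect your estimate.
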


\begin{proof}
Without loss of generality, one may assume $\omega = 0$.

The equivalence $(iii) \Leftrightarrow (iv)$ and the
implication $(iv) \Rightarrow (i)$ follow from
Theorems \ref{thmeqWgamma} 
and \ref{thmstrWgammaeq}.

\smallskip
$(i)\Rightarrow (ii)$:
Since $\mathcal{T} := \{T_t\, :\, t\geq 0\}$ is $\gamma$-bounded, the strongly 
measurable function $M : \R_+\times\N_N \longrightarrow \mathcal{L}(X)$ defined 
by $M(t,k) := T_t$ has $\gamma$-bounded range $\mathcal{T}$. 
Then by Theorem \ref{multithm}, for each $\psi \in \gamma(\R_{+}\times\N_N;X)$,
the function $M\psi$ belongs to $\gamma(\R_{+}\times\N_N;X)$, with
\begin{equation}\label{ineMpsi}
\norme{M\psi}_{\gamma(\R_{+}\times\N_N;X)} 
\leq\gamma(\mathcal{T})\norme{\psi}_{\gamma(\R_{+}\times\N_N;X)}.
\end{equation}
Consider $\alpha_1, \ldots, \alpha_N <0$ and $x_1,\ldots,x_N\in X$, and define
$$
\psi(t,k) = \sqrt{-\alpha_k}e^{\alpha_kt}x_k,\qquad t\in\R_+,\  k=1,\ldots,N.
$$ 
Then $\psi \in \gamma(\R_+\times \N_N;X)$ and 
\begin{equation}\label{egapsi}
	\norme{\psi}_{\gamma(\R_+\times\N_N;X)} = \frac{1}{\sqrt{2}}\norme{\sum_{k=1}^{N}\gamma_k\otimes x_k}_{G(X)}.
\end{equation}
Indeed,  $\psi = \sum_{k=1}^N h_k\otimes x_k$ where $h_k : \R^+\times \N_N \rightarrow \R$ is defined by 
\[h_k\colon
(t,j)\longmapsto\begin{cases}
\sqrt{-\alpha_k}e^{\alpha_kt}&\text{if }j=k \\
0&\text{if } j\neq k.
\end{cases}
\]
Further $h_1,\ldots, h_N$ are pairwise orthogonal with 
$\norme{h_k}_{L^2(\R_+\times \N_N)}= \norme{\sqrt{-\alpha_k}e^{\alpha_kt}}_{L^2(\R_+)} = 
\frac{1}{\sqrt{2}}$. Hence \eqref{egapsi} follows from \cite[Example 9.2.4]{hnvw}.

Recall that for each $k \in \{ 1,\ldots, N\}$, we have
\[
R(\alpha_k +it,A)x_k = -\int_{0}^{\infty}e^{its}e^{\alpha_k s}T_sx_kds.
\]
Applying Lemma \ref{fourierlem}, we deduce that
$(t,k) \longmapsto R(\alpha_k+it,A)x_k$ belongs to $\gamma(\R_+\times\N_N;X)$, with
\begin{equation}\label{egaRfourier}
\norme{(t,k) \mapsto \sqrt{-\alpha_k}R(\alpha_k + it, A)x_k}_{\gamma(\R\times\N_N,X)} = 
\sqrt{2\pi}\norme{(t,k) \mapsto \sqrt{-\alpha_k}e^{\alpha_kt}T_tx_k}_{\gamma(\R\times\N_N,X)}. 
\end{equation}
Combining \eqref{ineMpsi}, \eqref{egapsi} and \eqref{egaRfourier}, we actually obtain
\[
\norme{(t,k) \mapsto \sqrt{-\alpha_k}R(\alpha_k + it, A)x_k}_{\gamma(\R\times\N_N,X)} 
\leq \sqrt{\pi}\gamma(\mathcal{T})\norme{\sum_{k=1}^{N}\gamma_k \otimes x_k}_{G(X)},
\]
which proves \eqref{ine1}.

Finally, since $X$ is $K$-convex, the set $\{T_t^*\, :\, t\geq0\}$ is $\gamma$-bounded.
Further $(T_t^{\odot})_{t\geq 0}$ is a $C_0$-semigroup on the sun dual $X^{\odot}$, with
generator equal to $-A^{\odot}$.
Then the above computations together with Theorem \ref{thmsun} lead to \eqref{ine1*}.

\smallskip
$(ii)\Rightarrow (iii)$: Let $\alpha_1,\ldots,\alpha_N <0$, $x_1, \ldots, x_N \in X$ and $y_1, \ldots, y_N \in X^{\odot}$.
Applying Theorem \ref{inetrace}, one obtains 
\begin{align*}
&\sum_{k=1}^{N}\int_{\R}|\langle -\alpha_k R(\alpha_k+it,A)^{2}x_k,y_k \rangle| dt  \\
& = \sum_{k=1}^{N}\int_{\R}|\langle \sqrt{-\alpha_k} R(\alpha_k+it,A)x_k,\sqrt{-\alpha_k}R(\alpha_k+it,A)^*y_k \rangle| dt \\
&=\norme{(t,k)\mapsto <\sqrt{-\alpha_k}R(\alpha_k+it,A)x_k, \sqrt{-\alpha_k}R(\alpha_k+it,A^*)y_k>}_{L^1(\R\times\N_N)}\\
& \leq \norme{(t,k)\mapsto \sqrt{-\alpha_k}R(\alpha_k+it,A)x_k}_{\gamma(\R\times\N_N;X)}
\norme{(t,k)\mapsto\sqrt{-\alpha_k}R(\alpha_k+it,A^*)y_k}_{\gamma(\R\times\N_N;X^*)} \\
&\leq C^2	\norme{\sum_{k=1}^{N}{\gamma_k\otimes x_k}}_{G(X)}\norme{\sum_{k=1}^{N}{\gamma_k\otimes y_k}}_{G(X^*)}.
\end{align*}
Since $X^{\odot}$ is norming in $X$, it follows from the above
estimate and Theorem \ref{thmstrWgammaeq} that $A$ has a strong $\gamma$-$1$-bounded calculus of type $0$.
\end{proof}

\begin{rq1}
Let $(S,\mu)$ be a measure space and let $E(S)$ be a $K$-convex
Banach function space over $(S,\mu)$
(see \cite[appendix F]{hnvw} for definition). Then 
$E(S)$ has finite cotype hence
according to \cite[Proposition 9.3.8]{hnvw}, there exist $c>0$ and $C>0$ such 
that for each $f\in \gamma(\R\times \N_N; E(S))$,
\begin{equation}\label{Lattice}
c\norme{f}_{E(S;L^2(\R\times \N_N))} \leq \norme{f}_{\gamma(\R\times \N_N;E(S))} \leq C \norme{f}_{E(S;L^2(\R\times \N_N))}.
\end{equation}
Furthermore, the following equality holds,
\[
\norme{f}_{E(S;L^2(\R\times \N_N))} = \norme{\Big(\int_{\R} \sum_{k=1}^N|f(\cdot,k)|^2\Big)^{\frac{1}{2}}}_{E(S)}.
\]
The space $E(S)^*$ satisfies similar properties.
Hence using (\ref{Lattice}) and the Khintchine-Maurey inequality 
\cite[Theorem 7.2.13]{hnvw}, the condition $(ii)$ of Theorem \ref{shifengRthm} can be replaced by:
\begin{itemize}
\item [$(ii)'$] There exists a constant $C>0$ such that for all $N\in \N$, 
for all $x_1,\ldots,x_N \in X$,  for all $y_1,\ldots,y_N \in \overline{dom(A^*)}= X^{\odot}$, and 
for all $\alpha_1, \ldots, \alpha_N < \omega$,
\begin{equation*}
\norme{\Big(\sum_{k=1}^{N}\int_{\R}(\omega-\alpha_k)|R(\alpha_k + it, A)x_k|^2dt \Big)^{\frac{1}{2}}}_{E(S)} 
\leq C\norme{\big(\sum_{k=1}^{N}|x_k|^2\big)^{\frac{1}{2}}}_{E(S)}
\end{equation*}
and 
\begin{equation*}	
\norme{\Big(\sum_{k=1}^{N}\int_{\R}(\omega-\alpha_k)|R(\alpha_k + it, A^*)y_k|^2dt \Big)^{\frac{1}{2}}}_{E(S)^*} 
\leq C\norme{\big(\sum_{k=1}^{N}|y_k|^2\big)^{\frac{1}{2}}}_{E(S)^*}.
\end{equation*}
\end{itemize}
Thus $-A$ generates a $C_0$-semigroup of $\gamma$-type $\omega$ on $E(S)$
if and only if $(ii)'$ holds true.

Of course the above applies when $E(S)=L^p(S)$ for some $1<p<\infty$.
\end{rq1}

\begin{rq1}\label{HR}
In \cite[Theorem 6.4]{haa-roz}, Haase and Rozendaal state that if $-A$ generates
a $\gamma$-bounded $C_0$-semigroup on a Banach space $X$, then $A$ 
has a strong $m$-bounded functional calculus of type $0$, for any 
$m\geq 1$. If $X$ is K-convex, this is a formal consequence of Theorem
\ref{shifengRthm} and in this case, the latter is a strengthening of 
the Haase-Rozendaal theorem.

For general $X$, a proof of \cite[Theorem 6.4]{haa-roz} can
be derived from the arguments in the proof of  Theorem
\ref{shifengRthm}. Indeed assume that 
$-A$ generates
a $\gamma$-bounded $C_0$-semigroup, consider $\alpha<0$ and 
let $x\in X$ and $y\in X^{\odot}$.
The proof of Theorem
\ref{shifengRthm} shows that $t\mapsto R(\alpha+it,A)x$ belongs to
$\gamma(\R;X)$ and that 
\begin{equation}\label{HR1}
\sqrt{-\alpha}\,\bigl\Vert   
t\mapsto R(\alpha+it,A)x\bigr\Vert_{\gamma(\R,X)}\leq C\norme{x}
\end{equation}
for some constant $C>0$ not depending either on $\alpha$ or $x$.
Then let $\gamma'(\R;X^*)$ be the space introduced
in \cite[Section 5]{kal-wei1}. Using \cite[Remark 5.12, (S2)]{kal-wei1}
instead of Theorem \ref{multithm}, one obtains in a similar manner that
\begin{equation}\label{HR2}
\sqrt{-\alpha}\,\bigl\Vert   
t\mapsto R(\alpha+it,A)^*y\bigr\Vert_{\gamma'(\R,X^*)}\leq C\norme{y}.
\end{equation}
Finally, using \cite[Remark 5.12, (S1)]{kal-wei1}
instead of Theorem \ref{inetrace}, one deduces from (\ref{HR1}) and (\ref{HR2}) that 
$$
(-\alpha)\int_{\R}\vert R(\alpha+it,A)^2x,y\rangle\vert dt\,\leq C^2\norme{x}\norme{y}.
$$
Since the sun dual $X^{\odot}$ is $w^*$-dense in $X^*$, this shows that 
$A$ 
has a strong $1$-bounded functional calculus of type $0$ (and hence
a strong $m$-bounded functional calculus of type $0$ for any $m\geq 1$).
\end{rq1}

\section{A GEARHART-PRÜSS THEOREM ON $K$-CONVEX SPACES}
Let $A$ be a half-plane type operator on some Banach space $X$.
Its abscissa of uniform boundedness $s_0(A)$ is defined by
\[
s_0(A) := \sup\big\{\alpha \in \R \, :\, \sigma(A)\subset R_\alpha\ \hbox{and}\ 
\underset{Re(z) \leq \alpha}\sup\norme{R(z,A)}< \infty \big\}.
\]
If $-A$ generates a $C_0$-semigroup $(T_t)_{t\geq 0}$, then the exponential growth bound $\omega(A)$ if defined 
as the supremum of all $\omega\in\R$ such that $(T_t)_{t\geq 0}$ is of type $\omega$, that is,
\[
\omega(A) := \sup \big\{\omega \in \R\,:\,
\text{there exists $M_{\omega}>0$ such that $ \norme{T_t} \leq M_{\omega}e^{-\omega t}$ for all } t \geq 0 \big \}.  
\]
We introduce $\gamma$-bounded analogues of these notions, as follows. First we set 
\[
s_0^{\gamma}(A) := \sup\big\{\alpha \in \R 
\, :\, \sigma(A)\subset R_\alpha\ \hbox{and}\ \text{the set }
\{R(z,A)\, :\, Re(z)\leq \alpha \} \text{ is $\gamma$-bounded}\big\},
\]
with the convention that $s_0^{\gamma}(A)=-\infty$ if no set $\{R(z,A)\, :\, Re(z)\leq \alpha \}$
is $\gamma$-bounded.
Second, if $-A$ generates a $C_0$-semigroup $(T_t)_{t\geq 0}$, and if the latter 
admits a $\gamma$-type, then we set
\[
\omega^{\gamma}(A) := \sup \big\{\omega \in \R\,:\,
\text{the set }\{ e^{\omega t}T_t\, :\, t \geq 0 \} \text{ is  $\gamma$-bounded} \big \}. 
\]
By convention we set $\omega^{\gamma}(A)=-\infty$ if $(T_t)_{t\geq 0}$ has no $\gamma$-type.
See Example \ref{nogtype} for simple examples of such semigroups.

When $X$ is a Hilbert space, the Gearhart-Prüss Theorem \cite[Theorem 5.2.1]{bat-ar} 
asserts that $\omega(A) = s_0(A)$. The main purpose of this section is to give 
an analogous equality $\omega^{\gamma}(A) = s_0^{\gamma}(A)$ on $K$-convex 
Banach spaces.

It is obvious that $\omega^{\gamma}(A) \leq \omega(A)$ and 
$s_0^{\gamma}(A) \leq s_0(A)$. The next inequality is more significant.

\begin{lem}\label{Ineq} Assume that $-A$ generates a $C_0$-semigroup. Then
$\omega(A) \leq s_0^{\gamma}(A)$.
\end{lem}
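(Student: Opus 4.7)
My plan would be to show that every $\alpha < \omega(A)$ belongs to the set whose supremum defines $s_0^{\gamma}(A)$, so that letting $\alpha \uparrow \omega(A)$ yields the desired inequality. The key idea is to represent the resolvent on the closed half-plane $\{Re(z) \leq \alpha\}$ as a scalar $L^\infty$-integral against a fixed operator-valued function with integrable norm, so as to apply Lemma \ref{Linftymeans}, which precisely turns such uniform boundedness into $\gamma$-boundedness.

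Concretely, I would fix $\alpha < \omega(A)$, pick $\omega$ with $\alpha < \omega < \omega(A)$ (so that some $M > 0$ satisfies $\norme{T_t} \leq M e^{-\omega t}$ for every $t \geq 0$), and introduce the operator-valued function $F : \R_+ \to \mathcal{L}(X)$ defined by $F(t) := e^{\alpha t} T_t$. A direct calculation gives
\[
\norme{F(\cdot)x}_{L^1(\R_+;X)} \leq M\norme{x}\int_0^\infty e^{(\alpha - \omega)t}\, dt = \frac{M}{\omega - \alpha}\norme{x}, \qquad x \in X,
\]
so $F$ satisfies the hypothesis of Lemma \ref{Linftymeans}. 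Moreover since $A$ is of half-plane type $\omega$ and $\alpha < \omega$, one has $\sigma(A) \subset \overline{R_\omega} \subset R_\alpha$, so every $\lambda$ with $Re(\lambda) \leq \alpha$ belongs to $\rho(A)$.

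Next, I would invoke the Laplace representation of the resolvent, which is valid here since $Re(\lambda) \leq \alpha < \omega$: for each such $\lambda$,
\[
-R(\lambda,A) = \int_0^\infty e^{\lambda t}\, T_t\, dt = \int_0^\infty \phi_\lambda(t)\, F(t)\, dt,
\]
where $\phi_\lambda(t) := e^{(\lambda - \alpha)t}$ satisfies $\norme{\phi_\lambda}_{L^\infty(\R_+)} \leq 1$ because $Re(\lambda - \alpha) \leq 0$. Lemma \ref{Linftymeans} applied with $C = 1$ then ensures that the family $\{R(\lambda,A) : Re(\lambda) \leq \alpha\}$ is $\gamma$-bounded, so $\alpha$ lies in the set defining $s_0^{\gamma}(A)$. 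Hence $s_0^{\gamma}(A) \geq \alpha$, and since $\alpha < \omega(A)$ was arbitrary, $\omega(A) \leq s_0^{\gamma}(A)$.

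I do not anticipate any serious obstacle: the proof reduces to packaging the classical Laplace transform of the semigroup into the exact form required by Lemma \ref{Linftymeans}. The only subtlety is to keep the exponential factor $e^{\alpha t}$ inside $F$ rather than in the scalar weight, so that $\phi_\lambda$ has modulus at most $1$ uniformly on the whole closed half-plane $\{Re(z) \leq \alpha\}$; this is what allows a single application of Lemma \ref{Linftymeans} to cover all resolvents $R(\lambda, A)$ with $Re(\lambda) \leq \alpha$ at once.
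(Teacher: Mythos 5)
Your proposal is correct and follows essentially the same route as the paper: both write the resolvent on the closed half-plane via the Laplace formula as an $L^\infty$-weighted integral of an exponentially damped copy of the semigroup with uniformly bounded scalar weight, and then invoke Lemma \ref{Linftymeans} to get $\gamma$-boundedness of $\{R(z,A)\,:\,Re(z)\leq\alpha\}$ before letting $\alpha$ tend to $\omega(A)$. The only (cosmetic) difference is that you absorb $e^{\alpha t}$ into the operator-valued function while the paper uses $e^{\omega t}T_t$ and puts the extra factor $e^{(\alpha-\omega)s}$ into the scalar weight.
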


\begin{proof}
Let $\omega<\omega_0<\omega(A)$. By assumption, there exists a constant $M>0$ such that 
$\norme{T_s}\leq Me^{-\omega_0 s}$ for any $s\geq 0$.
Writing $e^{\omega s} T_s= e^{(\omega-\omega_0)s} e^{\omega_0 s} T_s$, we obtain that
$s\mapsto e^{\omega s} T_s x$ belongs to $L^1((0,\infty),X)$ for any $x\in X$, with
\begin{equation}\label{Ineq1}
\norme{s\mapsto e^{\omega s} T_s x}_{L^1((0,\infty),X)}\,\leq \frac{M}{\omega-\omega_0}\, \norme{x}.
\end{equation}
For any $\alpha \leq \omega$ and any $t\in\R$, we have 
\begin{align*}
R(\alpha + it,A)x &= - \int_{0}^{\infty} e^{its}e^{\alpha s}T(s)xds \\
& =- \int_{0}^{\infty} e^{its}e^{(\alpha-\omega)s}e^{\omega s}T_sxds.
\end{align*}
Since $\vert e^{its}e^{(\alpha-\omega)s}\vert\leq 1$ for any $s>0$,
we derive that the set 
\begin{equation}\label{Ineq2}
\{R(\alpha + it,A)\,:\, t \in \R,\, \alpha \leq \omega  \}
\end{equation}
is included in the set
\[
\Big\{ \displaystyle\int_{0}^{\infty}\psi(s)e^{\omega s}T_sds,\,:\, 
\psi\in L^{\infty}((0,\infty)), \;\norme{\psi}_{\infty} \leq 1 \Big\}.
\]
By Lemma \ref{Linftymeans} and (\ref{Ineq1}), the above set is
$\gamma$-bounded. Therefore the set (\ref{Ineq2})
is $\gamma$-bounded. Hence $\omega < s_0^{\gamma}(A)$. Passing to the 
supremum, this yields  $\omega(A) \leq s_0^{\gamma}(A)$.
\end{proof}

Summarizing, we have
\begin{equation} \label{ineGerPru}
\omega^{\gamma}(A)\leq \omega(A) \leq s_0^{\gamma}(A)  \leq s_0(A) 
\end{equation}
whenever $-A$ generates a $C_0$-semigroup.

\begin{thm}\label{GPth}
Let $X$ be a $K$-convex Banach space. Let $-A$ be the generator of a $C_0$-semigroup 
of $\gamma$-type $\omega$ on $X$. 
Then $A$ has a strong $\gamma$-1-bounded functional
calculus of type $s$ for each $s < s_0^{\gamma}(A)$.
\end{thm}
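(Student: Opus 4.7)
The plan is to prove condition (ii) of Theorem \ref{shifengRthm} at type $s$; the equivalence (ii) $\Leftrightarrow$ (iii) of that theorem then delivers the strong $\gamma$-$1$-bounded functional calculus of type $s$. Condition (ii) at the exponent $\omega$ is already in hand via the implication (i) $\Rightarrow$ (ii) of Theorem \ref{shifengRthm}, so the task reduces to a bootstrap from $\omega$ up to $s$ driven by the $\gamma$-bounded resolvent family on $\{Re(z) \leq s'\}$, where $s'$ is chosen in $(s, s_0^{\gamma}(A))$. The $K$-convexity is used twice: it forces $X^*$ to be $K$-convex, so that the adjoint family $\{R(z, A)^* : Re(z) \leq s'\}$ is automatically $\gamma$-bounded in $\mathcal{L}(X^*)$; and it permits the sun-dual semigroup $(T^{\odot}_t)$ to inherit the $\gamma$-type $\omega$, so that the bootstrap can be run in parallel for $A^{\odot}$ to furnish the companion estimate for $A^*$ required by (ii).

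Fix $s' \in (s, s_0^{\gamma}(A))$, let $K$ be a common $\gamma$-bound for the resolvent families of $A$ and $A^*$ on $\{Re(z) \leq s'\}$, and set $\epsilon := 1/(2K)$. Define $s_j := \omega + j\epsilon$ and prove by induction on $j$ that (ii) holds at type $s_j$ with some finite constant $C_j$. Given the induction hypothesis at $s_j$, fix $\alpha_1, \ldots, \alpha_N < s_{j+1}$ and $x_1, \ldots, x_N \in X$, and split the index set into $I_1 = \{k : \alpha_k \leq s_j - \epsilon\}$ and $I_2 = \{k : s_j - \epsilon < \alpha_k < s_{j+1}\}$. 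On $I_1$ one has $\sqrt{s_{j+1} - \alpha_k} \leq \sqrt{2}\sqrt{s_j - \alpha_k}$, so multiplying by the scalar $c_k := \sqrt{(s_{j+1} - \alpha_k)/(s_j - \alpha_k)} \leq \sqrt{2}$ and invoking the induction hypothesis (together with the contractivity of the canonical projection from $G(X)$ onto its $I_1$-coordinates) yields the required estimate on $I_1$.

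On $I_2$ pick $\beta_k := \alpha_k - \epsilon < s_j$ and invoke the resolvent identity
\[
R(\alpha_k + it, A) = R(\beta_k + it, A) - \epsilon\, R(\alpha_k + it, A)\, R(\beta_k + it, A),
\]
where the commutativity of resolvents is used to arrange the composition in the second term with $R(\alpha_k + it, A)$ on the left. Thanks to the key identity $s_{j+1} - \alpha_k = s_j - \beta_k$, multiplying through by $\sqrt{s_{j+1} - \alpha_k}$ turns the first term into $\sqrt{s_j - \beta_k}\, R(\beta_k + it, A) x_k$, which the induction hypothesis at $s_j$ controls by $C_j\norme{\sum_k \gamma_k\otimes x_k}_{G(X)}$. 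For the second term, the $\gamma$-multiplier theorem (Theorem \ref{multithm}) applied with multiplier $M(t, k) = R(\alpha_k + it, A)$ of $\gamma$-bound at most $K$ to the input $-\epsilon\sqrt{s_j - \beta_k}\,R(\beta_k + it, A) x_k$, whose $\gamma$-norm is again $\leq \epsilon C_j\norme{\sum_k \gamma_k\otimes x_k}_{G(X)}$, produces a bound of $K\epsilon\, C_j = \tfrac{1}{2}C_j$ times the Gaussian norm of the $x_k$'s. Summing gives $C_{j+1} \leq (\sqrt{2} + \tfrac{3}{2})\,C_j$, which remains finite after the finitely many steps needed to reach $s_N \geq s$; a contractive scalar multiplier trims the square-root factor from $s_N$ down to $s$. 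Running the symmetric argument on $X^{\odot}$ supplies the companion estimate for $A^*$.

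The main obstacle is to avoid a self-referential absorption of the unknown left-hand side, which would require an a priori finiteness argument for $\norme{(t,k) \mapsto R(\alpha_k + it, A) x_k}_{\gamma}$; the commutativity of resolvents resolves this by letting the multiplier theorem be applied to an input already controlled by the induction hypothesis, so that the iteration produces only finite quantities at every stage.
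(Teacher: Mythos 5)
Your proposal is correct in substance and uses exactly the ingredients of the paper's proof: the resolvent identity to pass from a level where the square-function estimate is known to a higher level, the $\gamma$-multiplier theorem (Theorem \ref{multithm}) applied with the resolvent family $\{R(z,A)\,:\,Re(z)\leq s'\}$, which is $\gamma$-bounded by the definition of $s_0^{\gamma}(A)$, $K$-convexity to get the same $\gamma$-boundedness for the adjoint family and run the companion estimate via \eqref{ine1*}, and Theorem \ref{shifengRthm} used in both directions. The difference is organisational: the paper avoids your induction entirely by a one-step choice of parameters. Given targets $s_1,\ldots,s_N<s$, it picks $\alpha_k<\omega$ with $\omega-\alpha_k=s-s_k$, so the resolvent identity becomes $\sqrt{s-s_k}\,R(s_k+it,A)x_k=\bigl(I+(\omega-s)R(s_k+it,A)\bigr)\sqrt{\omega-\alpha_k}\,R(\alpha_k+it,A)x_k$ with the perturbation coefficient $\omega-s$ independent of $k$; a single application of the multiplier theorem with the $\gamma$-bounded multiplier $(t,k)\mapsto I+(\omega-s)R(s_k+it,A)$ then transfers \eqref{ine1} from level $\omega$ to all levels below $s$ at once, with no need for small steps or for tracking constants. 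Your iterative scheme achieves the same end, but two small points should be fixed or noted: (a) as written the levels $s_j$ may climb up to $s_N\geq s$ and hence possibly past $s'$ (by at most $\epsilon$), in which case the multipliers $R(\alpha_k+it,A)$ with $\alpha_k\in(s',s_{j+1})$ are no longer covered by the $\gamma$-bound $K$ (nor, a priori, by the definition of $s_0^{\gamma}(A)$); this is trivially repaired by making the last step land exactly on $s$ or by shrinking $\epsilon$, and since the multiplier theorem is always applied to the already-controlled function, only finiteness of the constants matters, so the specific choice $\epsilon=1/(2K)$ is not needed; (b) the splitting into $I_1$ and $I_2$ is superfluous, as taking $\beta_k=\alpha_k-\epsilon$ for every $k$ works uniformly via the identity $s_{j+1}-\alpha_k=s_j-\beta_k$.
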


\begin{proof}
We fix some $s < s_0^{\gamma}(A)$. If $s\leq \omega$, then $A$ has a strong $\gamma$-1-bounded functional
calculus of type $s$ by Theorem \ref{shifengRthm}. Thus we may now assume that $\omega <s$.

Let $N\in \N$, let $x_1,\ldots,x_N \in X$ and let $y_1,\ldots, y_N \in X^{\odot}$. 
According to Theorem \ref{shifengRthm}, an estimate \eqref{ine1} is satisfied
for any $\alpha_1, \ldots, \alpha_N < \omega$.
Consider $s_1,...,s_N < s$ and let $\alpha_1,\ldots,\alpha_N < \omega$ be chosen such that 
\begin{equation}\label{Choice}
\omega-\alpha_k =s - s_k,\qquad k \in \N_N. 
\end{equation}
By the resolvent identity, we have
\[
R(s_k+it,A)x_k = \big(I+(\alpha_k-s_k)R(s_k +it,A)\big)R(\alpha_k+it,A)x_k
\]
for any $k \in \N_N$ and any $t\in \R$.
According to (\ref{Choice}), this implies that 
\[
\sqrt{s-s_k}R(s_k+it,A)x_k = \big(I+(\omega-s)R(s_k +it,A)\big)\sqrt{\omega-\alpha_k}R(\alpha_k+it,A)x_k.
\]
Now define $M_{s_1,...,s_n} : \R\times \N_N \rightarrow \mathcal{L}(X)$ by 
\[	
M_{s_1,...,s_n}(t,k) := I+(\omega-s)R(s_k+it,A),\quad t\in\R,\ k\in \N_N.
\]
The range of $M_{s_1,...,s_n}$ is included in the set 
$$
\{I + (\omega-s)R(\alpha+it,A),\; \alpha \leq s \},
$$ 
which is independent of $s_1,\cdots,s_k$. The latter set is $\gamma$-bounded, by the definition of $s_0^{\gamma}(A)$. 
Let $K>0$ denote its $\gamma$-bounded constant. 
Applying Theorem \ref{multithm} and \eqref{ine1}, we obtain that
\begin{align*}
\norme{(t,k)\mapsto \sqrt{s-s_k}R(s_k+it,A)x_k}_{\gamma(\R\times\N_N;X)} &
\leq K\norme{(t,k)\mapsto \sqrt{\omega-\alpha_k}R(\alpha_k+it,A)x_k}_{\gamma(\R\times\N_N;X)} \\
&\leq KC\norme{\sum_{k=1}^{N}{\gamma_k\otimes x_k}}_{G(X)}.
\end{align*}

Since $X$ is $K$-convex, the set $\{I + (\omega-s)R(\alpha+it,A)^*,\; \alpha \leq s \}$
is $\gamma$-bounded as well. Hence using \eqref{ine1*} we   obtain  a similar estimate
\begin{align*}
\norme{(t,k)\mapsto \sqrt{s-s_k}R(s_k+it,A^*)y_k}_{\gamma(\R\times\N_N;X^{*})}  
\leq K^*C\norme{\sum_{k=1}^{N}{\gamma_k\otimes y_k}}_{G(X^{*})}.
\end{align*}
Now applying the implication ``$(ii) \Rightarrow (iii)$'' of
Theorem \ref{shifengRthm}, we obtain the desired result. 
\end{proof}

\begin{cor}\label{corGP}
Let $X$ be a $K$-convex Banach space and let
$-A$ be the generator of a $C_0$-semigroup 
on $X$.  
If $\omega^{\gamma}(A) > -\infty$, then we have
$$
\omega^{\gamma}(A) = \omega(A) = s_0^{\gamma}(A).
$$ 
\end{cor}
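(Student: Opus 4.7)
The plan is to combine Theorem \ref{GPth} with the semigroup-generation implication contained in Remark \ref{Add} (or equivalently the implication $(iii)\Rightarrow(i)$ of Theorem \ref{shifengRthm}) in order to obtain the reverse inequality $s_0^{\gamma}(A)\leq \omega^{\gamma}(A)$. The chain \eqref{ineGerPru} already gives $\omega^{\gamma}(A)\leq \omega(A)\leq s_0^{\gamma}(A)$ as soon as $-A$ generates a $C_0$-semigroup, so the only missing inequality is this last one.

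First, I would unpack the hypothesis $\omega^{\gamma}(A)>-\infty$: it says that $(T_t)_{t\geq 0}$ admits at least one $\gamma$-type $\omega_0$, i.e.\ the set $\{e^{\omega_0 t}T_t\,:\, t\geq 0\}$ is $\gamma$-bounded. In particular, all the hypotheses of Theorem \ref{GPth} are met with this $\omega=\omega_0$. Applying Theorem \ref{GPth}, I obtain that for every $s<s_0^{\gamma}(A)$, the operator $A$ has a strong $\gamma$-$1$-bounded functional calculus of type $s$.

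Next, I would invoke Remark \ref{Add}, which states that if $A$ has a strong $\gamma$-$1$-bounded functional calculus of type $s$, then $-A$ generates a $C_0$-semigroup of $\gamma$-type $s$. Equivalently, the set $\{e^{st}T_t\,:\, t\geq 0\}$ is $\gamma$-bounded, so by the definition of $\omega^{\gamma}(A)$ we have $s\leq \omega^{\gamma}(A)$. Since $s<s_0^{\gamma}(A)$ was arbitrary, taking the supremum yields $s_0^{\gamma}(A)\leq \omega^{\gamma}(A)$. Combined with \eqref{ineGerPru} this gives
\[
\omega^{\gamma}(A)\leq \omega(A)\leq s_0^{\gamma}(A)\leq \omega^{\gamma}(A),
\]
hence the three quantities coincide.

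There is essentially no obstacle here beyond checking that the two inputs are in fact applicable: the assumption $\omega^{\gamma}(A)>-\infty$ is exactly what allows one to enter Theorem \ref{GPth} with a legitimate $\omega$, and $K$-convexity of $X$ (which is needed both in Theorem \ref{GPth} and in the argument of Theorem \ref{shifengRthm} underlying Remark \ref{Add}) is assumed. The only delicate point is the trivial case $s_0^{\gamma}(A)=-\infty$, where the statement $s_0^{\gamma}(A)\leq \omega^{\gamma}(A)$ is vacuous; otherwise the argument is a direct concatenation of Theorem \ref{GPth} and Remark \ref{Add}.
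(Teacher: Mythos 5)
Your proof is correct and follows essentially the same route as the paper: apply Theorem \ref{GPth} (legitimized by $\omega^{\gamma}(A)>-\infty$) to get a strong $\gamma$-$1$-bounded calculus of type $s$ for every $s<s_0^{\gamma}(A)$, deduce $\gamma$-type $s$ via Remark \ref{Add} / Theorem \ref{shifengRthm}, and combine $s_0^{\gamma}(A)\leq\omega^{\gamma}(A)$ with \eqref{ineGerPru}. The paper's own argument is exactly this concatenation, citing Theorem \ref{shifengRthm} for the generation step.
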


\begin{proof}
If $\omega^{\gamma}(A) > -\infty$, then by
Theorem \ref{GPth}, $A$ has a strong $\gamma$-1-bounded functional calculus of type $s$ for
any $s < s_0^{\gamma}(A)$. According to Theorem \ref{shifengRthm}, this implies that 
$(T_t)_{t\geq 0}$ is of $\gamma$-type $s$ for
any $s < s_0^{\gamma}(A)$.
Thus $\omega^{\gamma}(A) \geq s_0^{\gamma}(A)$. Combining with (\ref{ineGerPru}), we obtain the result.
\end{proof}

\begin{ex}
Let $(\Omega,\mu)$ be a measure space and let $1<p<\infty$.
According to \cite[Theorem 5.3.6]{bat-ar}, if $-A$ is the generator of a 
\textit{positive} $C_0$-semigroup $(T_t)_{t\geq 0}$ on $L^p(\Omega)$, then $s_0(A) = \omega(A)$. 
If in addition $\omega^{\gamma}(A)>-\infty$ then the equalities 
$\omega^{\gamma}(A) = \omega(A) = s_0^{\gamma}(A) = s_0(A)$ hold by Corollary \ref{corGP}.
\end{ex}

The equality $\omega^{\gamma}(A) = \omega(A)$ in Corollary \ref{corGP} 
implies the following statement.

\begin{cor}\label{corGP2}
Let $(T_t)_{t\geq0}$ is a bounded $C_0$-semigroup on some $K$-convex Banach space.
If there exists $\delta >0$ such that $\{ e^{-\delta t} T_t\, :\, t\geq 0 \}$ is $\gamma$-bounded, 
then $\{ e^{-\delta t} T_t \, :\, t\geq 0 \}$ is $\gamma$-bounded for any $\delta > 0$.
\end{cor}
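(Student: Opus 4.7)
The plan is to deduce this directly from Corollary \ref{corGP}, using the fact that the property of being of $\gamma$-type is monotone. The whole argument is a short chain of inequalities between $\omega^{\gamma}(A)$, $\omega(A)$, and the hypotheses.

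First, I would rephrase the hypothesis in the language of $\gamma$-type: the $\gamma$-boundedness of $\{e^{-\delta t}T_t : t\geq 0\}$ for some $\delta>0$ means precisely that $(T_t)_{t\geq 0}$ is of $\gamma$-type $-\delta$, hence $\omega^{\gamma}(A)\geq -\delta > -\infty$. Since $-A$ generates a $C_0$-semigroup on a $K$-convex space, Corollary \ref{corGP} applies and yields $\omega^{\gamma}(A) = \omega(A)$. Because $(T_t)_{t\geq 0}$ is bounded, we can bound $\norme{T_t}\leq M\,e^{0\cdot t}$, so $\omega(A)\geq 0$. Combining, $\omega^{\gamma}(A)\geq 0$.

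Next I would record the elementary monotonicity lemma: if $\omega_1 < \omega_2$ and $\{e^{\omega_2 t}T_t : t\geq 0\}$ is $\gamma$-bounded, then $\{e^{\omega_1 t}T_t : t\geq 0\}$ is $\gamma$-bounded, with the same constant. This is immediate because $e^{\omega_1 t}T_t = e^{(\omega_1-\omega_2)t}\cdot e^{\omega_2 t}T_t$ with $|e^{(\omega_1-\omega_2)t}|\leq 1$ for $t\geq 0$, and $\gamma$-boundedness is preserved under multiplication by scalars of modulus at most one (this is a direct consequence of the defining inequality \eqref{Rboundedness}).

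To finish, fix an arbitrary $\delta'>0$. Since $\omega^{\gamma}(A)\geq 0 > -\delta'$, the supremum defining $\omega^{\gamma}(A)$ guarantees the existence of some $\omega \in (-\delta',\omega^{\gamma}(A)]$ such that $\{e^{\omega t}T_t : t\geq 0\}$ is $\gamma$-bounded. By the monotonicity observation applied with $\omega_1 = -\delta'$ and $\omega_2 = \omega$, the set $\{e^{-\delta' t}T_t : t\geq 0\}$ is $\gamma$-bounded as well. As $\delta' > 0$ was arbitrary, this proves the corollary.

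The only step requiring any care is the monotonicity lemma, but it is a one-line check from the definition; no genuine obstacle arises since all the analytic work has already been absorbed into Theorem \ref{shifengRthm} and Corollary \ref{corGP}.
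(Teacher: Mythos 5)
Your proposal is correct and follows exactly the route the paper intends: the paper derives Corollary \ref{corGP2} directly from the equality $\omega^{\gamma}(A)=\omega(A)$ of Corollary \ref{corGP}, with the hypothesis giving $\omega^{\gamma}(A)\geq-\delta>-\infty$, boundedness giving $\omega(A)\geq 0$, and the contraction principle \eqref{CP} supplying the monotonicity in $\delta$. Nothing is missing; your explicit monotonicity lemma is precisely the one-line check the paper leaves implicit.
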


\begin{rq1}\label{rq_gambdd_genbdd}
Let $(T_t)_{t\geq0}$ is a bounded $C_0$-semigroup on $X$, with generator $-A$. If $A$ is bounded
(equivalently, if $(T_t)_{t\geq0}$  is uniformly continuous), then the property considered
in the above statement is true, that is,
$\{ e^{-\delta t} T_t, \; t\geq 0 \}$ is $\gamma$-bounded for any $\delta>0$.
	
Indeed, consider $\delta > 0$. Since $A$ is bounded and $\sigma(A) \subset \overline{R_0}$, 
there exists an open disk $D$ such that 
$\sigma(A + \delta ) \subset D \subset R_0$. Let $\partial D$ be the boundary of $D$
oriented counterclockwise. Then by the  Dunford-Riesz calculus, we have
\[
e^{-\delta  t }T_t = \frac{1}{2\pi i}\int_{\partial D}e^{-t\lambda}R(\lambda,A + \delta )d\lambda 
\]
for any $t\geq 0$. Then a straightforward application of Lemma \ref{Linftymeans}
shows that $\{e^{-\delta  t}T_t\, :\,  t \geq 0 \}$ is $\gamma$-bounded.	  
\end{rq1}

We conclude this section with an observation and two questions.
First we state a result that we recently obtained (with C. Le Merdy).

\begin{thm}\label{thm-ex2} (\cite[Corollary 0.5]{arn-lem})
Let $X$ be isomorphic to a separable Banach lattice with finite cotype such that 
$X$ is not isomorphic to an Hilbert space.
Then there exists $A \in \mathcal{L}(X)$ such that $\{e^{-tA}\, :\,t \geq 0\}$ is bounded but not $\gamma$-bounded.  
\end{thm}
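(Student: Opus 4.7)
The plan is to exhibit an explicit bounded operator $A\in\mathcal{L}(X)$ whose semigroup $\{e^{-tA}\,:\,t\geq 0\}$ is uniformly bounded yet fails to be $\gamma$-bounded. The first observation to keep in mind is Remark \ref{rq_gambdd_genbdd}: for any bounded generator, $\{e^{-\delta t}e^{-tA}\,:\,t\geq 0\}$ is automatically $\gamma$-bounded for every $\delta>0$. Hence, any candidate $A$ must have $\sigma(A)\cap i\R\neq\emptyset$, so the construction should produce spectrum lying in the closed right half-plane and actually touching $i\R$.

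First I would reduce to a concrete Banach lattice model. Since $X$ is a Banach lattice with finite cotype that is not isomorphic to a Hilbert space, Kwapien's theorem implies that $X$ fails to have either type $2$ or cotype $2$. Combined with the separability assumption and the Banach lattice structure, classical results of Lindenstrauss--Tzafriri type make it possible to locate inside $X$ a complemented sublattice modeled on a non-Hilbertian $\ell^p$-sum of finite-dimensional lattices, on which the construction can first be carried out; the example then transfers to $X$ by taking a direct sum with the zero operator on a complement.

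Next I would build $A$ as a block-diagonal operator with finite-dimensional blocks $A_n\in\mathcal{L}(E_n)$ whose spectra lie on the imaginary axis, chosen so that $\sup_{t\geq 0,\,n\in\N}\norme{e^{-tA_n}}<\infty$ while the family $\{e^{-tA_n}\,:\,t\geq 0,\,n\in\N\}$ already fails $\gamma$-boundedness on the $\ell^p$-sum of the $E_n$. Since $X$ is a Banach lattice with finite cotype, the $\gamma$-boundedness of this family is equivalent to the square-function estimate \eqref{Rboundedness2}. A natural model for the blocks is a sequence of bounded matrices on $\ell^p_n$ (nilpotent Jordan-type blocks rescaled so the norms of the exponentials stay bounded, or unitary rotation-type blocks) whose $\gamma$-bounds on $\ell^p_n$ for $p\neq 2$ are known to diverge polynomially with $n$ while their uniform norms remain $O(1)$.

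The main obstacle will be the quantitative step: exhibiting sequences $(x_n^{(k)})$ and times $(t_n^{(k)})$ witnessing the failure of \eqref{Rboundedness2}, and simultaneously controlling the uniform norms $\norme{e^{-tA_n}}$ independently of $t$ and $n$. This is where the non-Hilbertian geometry of $X$ must be fully exploited, through dimension-dependent lower bounds on the $\gamma$-norms of the chosen block operators combined with matching uniform upper bounds. Once this is done, assembling the blocks into the block-diagonal operator $A$ and verifying that $\sigma(A)\subset i\R$ with $\norme{A}<\infty$ is routine, and the example is complete.
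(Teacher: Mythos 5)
This theorem is not proved in the paper at all: it is imported from \cite{arn-lem}, so your sketch has to stand on its own, and as it stands it is a strategy outline rather than a proof. The decisive step is the one you yourself label ``the main obstacle'': producing finite-dimensional blocks $A_n$ with $\sup_{n,\,t\geq 0}\norme{e^{-tA_n}}<\infty$ while the $\gamma$-bounds of $\{e^{-tA_n}\,:\,t\geq 0\}$ on $\ell^p_n$ diverge. You assert that such blocks (rescaled Jordan blocks, or diagonal rotation blocks) are ``known'' to have this property, but no reference or argument in that precise form exists in what you write, and the rotation case is genuinely delicate: for a diagonal generator the semigroup is a one-parameter subgroup of the diagonal torus, so to make its $\gamma$-bound large you must choose the frequencies carefully (say rationally independent, then use Kronecker's theorem together with Proposition \ref{R-summarizes} to pass to all unimodular diagonal matrices) and then actually \emph{prove} a lower bound of order $n^{|1/p-1/2|}$ for that set on $\ell^p_n$, with a matching uniform upper bound $\norme{e^{-tA_n}}\leq 1$. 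None of this is carried out, and the observation you draw from Remark \ref{rq_gambdd_genbdd} (that the spectrum must meet $i\R$) only constrains the candidates; it contributes nothing to the required lower estimate, which is the entire content of the theorem.

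The structural reduction is also left open. Kwapie\'n plus Maurey--Pisier/Krivine do give that $\ell^p_n$'s for some $p\neq 2$ are uniformly lattice-finitely representable in $X$, but your construction needs much more: copies $E_n$ that are \emph{uniformly complemented} and arranged compatibly (say as an unconditional decomposition of a complemented subspace), so that the block-diagonal operator $A=\oplus_n A_n$ is bounded on $X$, and so that the failure of the square-function estimate \eqref{Rboundedness2} on the sum of the $E_n$ transfers to $X$ after taking the direct sum with the zero operator on a complement. Invoking ``classical results of Lindenstrauss--Tzafriri type'' does not settle the complementation, the uniformity of the constants, or the gluing; this is precisely where separability, the lattice structure and finite cotype have to be exploited, and it is where the argument of \cite{arn-lem} does its real work (by a route different from a bare block-diagonal ansatz). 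With both the quantitative block estimates and the embedding/complementation step missing, the proposal does not yet constitute a proof.
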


Combining this theorem with Remark \ref{rq_gambdd_genbdd}, we obtain that Corollary \ref{corGP2} is sharp
in the class of uniformly continuous semigroups. Namely on 
any $K$-convex separable Banach lattice not isomorphic to a Hilbert space 
(on $L^p$ for $1<p\not=2<\infty$, say) we obtain a uniformly continuous semigroup
$(T_t)_{t\geq 0}$ such that 
$\{e^{-\delta  t}T_t\, :\,  t \geq 0 \}$ is $\gamma$-bounded for any 
$\delta>0$, $\{T_t\, :\,  t \geq 0 \}$ is bounded but 
$\{T_t\, :\,  t \geq 0 \}$ is not $\gamma$-bounded.

The assumption that $X$ is $K$-convex space in Corollary \ref{corGP2} is quite surprising. 
This leads to the following question:
\begin{q}
Does the assumption $X$ is $K$-convex space in Corollary  \ref{corGP2} can be dropped?  
\end{q}

We recall (\ref{ineGerPru}) and the existence of $-A$ generating a 
$C_0$-semigroup $(T_t)_{t\geq0}$ such that 
$\omega(A) < s_0(A) $. So we ask

\begin{q}
Does there exist an operator $A$ such that 
$-A$ which generates a $C_0$-semigroup $(T_t)_{t\geq0}$, 
satisfying $\omega(A) < s_0^{\gamma}(A)$ ?
\end{q}


\section{AN OVERVIEW}

Let $\omega\in\R$ and let $A$ be a half-plane type operator on some Banach space $X$.
Either in \cite{bat-haa} or in the present paper, the following six
properties are considered:

\vspace{0.5cm}
\begin{enumerate}[label = (\roman*),leftmargin=3.5cm ,parsep=0cm,itemsep=0cm,topsep=0cm]
\item $A$ has a bounded $H^{\infty}$-functional calculus of type $\omega$.
\item $A$ has a strong $1$-bounded functional calculus of type $\omega$.
\item $-A$ generates a $C_0$-semigroup of type $\omega$.
\end{enumerate} 
\vspace{0.3cm}
\begin{enumerate}[label = (\roman*$)_{\gamma}$,leftmargin=3.5cm ,parsep=0cm,itemsep=0cm,topsep=0cm]
\item $A$ has a $\gamma$-bounded $H^{\infty}$-functional calculus of type $\omega$.
\item $A$ has a strong $\gamma$-$1$-bounded functional calculus of type $\omega$.
\item $-A$ generates a $C_0$-semigroup of $\gamma$-type $\omega$.
\end{enumerate}
\vspace{0.5cm}

The aim of this last section is to give an overview of the relations between these properties,
at least on $K$-convex spaces.
This will require the analysis of a specific example, see Proposition \ref{Propequinotalpha}
below. In the above list, we have deliberately omitted
the strong $m$-bounded and $\gamma$-$m$-bounded functional calculi.

It follows from Proposition \ref{1implies2} and \cite[Theorem 6.4]{bat-haa} that
$$
(i) \Rightarrow (ii)\Rightarrow (iii).
$$

Likewise it follows from Remark \ref{rq1} (3) and Remark \ref{Add} that 
$$
(i)_{\gamma} \Rightarrow (ii)_{\gamma} \Rightarrow (iii)_{\gamma}.
$$

The implication ``$(ii) \Rightarrow (i)$'' is wrong. Indeed it follows from either
\cite{bai-cle} or \cite{mci-yag} that on any infinite dimensional Hilbert space
$H$, there exists a bounded $C_0$-semigroup $(T_t)_{t\geq 0}$ on $H$ whose 
negative generator $A$ does not have a bounded $H^{\infty}$-functional calculus of type $0$.
Thus with $\omega=0$, $A$ satisfies (iii) and does not satisfy (i). Moreover 
(ii) and (iii) are equivalent on Hilbert space, by \cite[Theorem 7.1]{bat-haa}.
This proves the result.

Since $\gamma$-boundedness and uniform boundedness are equivalent on Hilbert space, 
the above also shows that the implication ``$(ii)_\gamma \Rightarrow (i)_\gamma$'' is wrong.

The implication ``$(iii) \Rightarrow (ii)$'' is wrong. Indeed let $1 < p\neq 2 < +\infty$ and
let $(T_t)_{t \in \R}$ be the right translation group on $L^p(\R)$, which is a bounded $C_0$-group.
Let $-A$ denote its generator. It follows from Gomilko's paper \cite{gom} that either
$A$ or $-A$ does not have a strong $1$-bounded functional calculus of type $0$.

We have shown in Theorem \ref{shifengRthm}
that if $X$ is $K$-convex, then the implication ``$(iii)_\gamma \Rightarrow (ii)_\gamma$''
holds true. We do not know whether ``$(iii)_\gamma \Rightarrow (ii)_\gamma$'' holds true on any Banach space.

The implication ``$(iii)_{\gamma} \Rightarrow (ii)$'' holds true, by \cite[Theorem 6.4]{haa-roz} (see
Remark \ref{HR} for more on this.)

We noticed above that $(iii)$ does not imply $(i)$ on Hilbert space. Consequently, 
The implication ``$(iii)_\gamma \Rightarrow (i)$'' is wrong.

The only remaining question is whether $(i)$ implies $(iii)_\gamma$. We are going to show that 
this is wrong on sufficently bad spaces, see Example \ref{ex} below.

For this purpose we introduce a class of $C_0$-(semi)groups of independent interest. 
Recall the Gaussian space $G(X)$ from Subsection 2.3. We will use the 
so-called `contraction principle' \cite[Theorem 6.1.13]{hnvw}, which says 
that for any $x_1, \ldots , x_n \in X$ and any $\alpha_1, \ldots, \alpha_n \in \C$, we have
\begin{equation}\label{CP}
\norme{\sum_{k=1}^n \alpha_k \gamma_k\otimes x_k}_{G(X)} \leq 
\underset{k}\sup|\alpha_k| \norme{\sum_{k=1}^n \gamma_k\otimes x_k}_{G(X)}.
\end{equation}

We recall that $X$ has property $(\alpha)$ (see \cite[section 7.5]{hnvw} for more details) 
if there exists a constant $C\geq1$ such that for any finite family 
$(x_{ij})$ in $X$ and any finite family $(t_{ij})$ in $\C$, we have
\begin{equation}\label{propalpha}
\norme{\sum_{i,j}\gamma_i\otimes\gamma_j\otimes t_{ij}x_{ij}}_{G(G(X))} 
\leq C \underset{i,j}\sup|t_{ij}|\norme{\sum_{i,j}\gamma_i\otimes\gamma_j\otimes x_{ij}}_{G(G(X))}.
\end{equation}
Banach spaces with property $(\alpha)$ have a finite cotype, thus 
$\mathcal{R}$-boundedness and $\gamma$-boundedness are equivalent on such spaces. 
We recall that the class of all Banach spaces with property $(\alpha)$ 
is stable under taking subspaces and that
all Banach lattices with a finite cotype have property $(\alpha)$.
In particular, for any $1\leq p < \infty$, $L^p$-spaces 
and their subspaces have property $(\alpha)$.

Let $(\xi_k)_{k\geq 1}$ be a sequence of distinct points of $\R$. 
For any finite Gaussian sum $\sum_{k=1}^{n}\gamma_k\otimes x_k$, with $x_1,\ldots, x_n\in X$, we let 
\begin{equation}\label{DefSG}
T_t\Bigl(\sum_{k=1}^{n}\gamma_k\otimes x_k\Bigr) := \sum_{k=1}^n e^{-it\xi_k}\gamma_k\otimes x_k, \quad t\in\R.
\end{equation}
Then we have
\[
\norme{T_t\Bigl(\sum_{k=1}^{n}\gamma_k\otimes x_k\Bigr)} \leq  \norme{\sum_{k=1}^n \gamma_k\otimes x_k},
\]
by (\ref{CP}). 
Since the finite Gaussian sums are dense in $G(X)$, each $T_t$ extends to a bounded
linear operator on $G(X)$ (still denoted by $T_t$), with $\norme{T_t} = 1$.
Furthermore  $(T_t)_{t\in\R}$ is a $C_0$-group. 
Indeed it is plain that for any finite Gaussian sum $z=
\sum_{k=1}^{n}\gamma_k\otimes x_k$, $T_t(z)\to z$ when $t\to 0$.
Then the strong continuity of $(T_t)_{t\in\R}$ follows from the 
uniform boundedness of $(T_t)_{t\in\R}$ and the density of the set of all 
finite Gaussian sums in $G(X)$.

\begin{prop}\label{Propequinotalpha}
Let $-A$ denote the generator of the $C_0$-group $(T_t)_{t\in\R}$ defined by (\ref{DefSG}).
\begin{itemize}
\item [(1)] 
$A$ has a bounded $H^{\infty}$-functional calculus of type $0$.
\item [(2)] 
The $C_0$-group $(T_t)_{t \in \R}$ is $\gamma$-bounded if and only if $X$ has property $(\alpha)$.  
\end{itemize}		
\end{prop}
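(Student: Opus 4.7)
The plan is to work at the level of finite Gaussian sums, on which $A$, the semigroup, and the resolvent all diagonalize: from $T_t(\gamma_k\otimes x_k)=e^{-it\xi_k}(\gamma_k\otimes x_k)$ one immediately reads off $A(\gamma_k\otimes x_k)=i\xi_k(\gamma_k\otimes x_k)$ and $R(\lambda,A)(\gamma_k\otimes x_k)=(\lambda-i\xi_k)^{-1}(\gamma_k\otimes x_k)$. In particular $\sigma(A)\subset i\R\subset\overline{R_0}$, and for $\mathrm{Re}(\lambda)\leq\alpha<0$ the resolvent is bounded in norm by $(-\alpha)^{-1}$, confirming that $A$ is of half-plane type $0$.

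For Part~$(1)$ I would fix $\alpha<0$ and $f\in\mathcal{E}(R_\alpha)$, and apply the Dunford formula (\ref{fofA}) to a single vector $\gamma_k\otimes x_k$. Pulling the scalar factor outside reduces the computation to the integral $\frac{1}{2\pi i}\int_\R f(\delta+it)(\delta+it-i\xi_k)^{-1}\,dt$, which by a standard residue argument (the pole at $i\xi_k$ sits to the right of the contour and $f\in\mathcal{E}$ supplies the decay needed to close on that side) equals $f(i\xi_k)$. By linearity this gives $f(A)\bigl(\sum_k\gamma_k\otimes x_k\bigr)=\sum_k f(i\xi_k)\gamma_k\otimes x_k$ on the dense subspace of finite Gaussian sums, and the contraction principle (\ref{CP}) then produces
\[
\norme{f(A)\bigl(\sum_k\gamma_k\otimes x_k\bigr)}_{G(X)}\,\leq\,\sup_k|f(i\xi_k)|\,\norme{\sum_k\gamma_k\otimes x_k}_{G(X)}\,\leq\,\norme{f}_{H^{\infty}(R_0)}\norme{\sum_k\gamma_k\otimes x_k}_{G(X)},
\]
the final step using that each $i\xi_k$ lies in $\overline{R_0}$ and that $f$ extends continuously there (as $\alpha<0$ strictly). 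Density and Remark \ref{Reduction} then yield the bounded $H^{\infty}$-functional calculus of type $0$.

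For Part~$(2)$ I would write any $z_j\in G(X)$ as $z_j=\sum_k\gamma_k'\otimes x_{jk}$ and use $T_{t_j}(z_j)=\sum_k e^{-it_j\xi_k}\gamma_k'\otimes x_{jk}$, so that $\gamma$-boundedness of $\{T_t:t\in\R\}$ becomes exactly the assertion that
\[
\norme{\sum_{j,k}e^{-it_j\xi_k}\gamma_j\otimes\gamma_k'\otimes x_{jk}}_{G(G(X))}\,\leq\,K\,\norme{\sum_{j,k}\gamma_j\otimes\gamma_k'\otimes x_{jk}}_{G(G(X))}
\]
holds uniformly in $t_j\in\R$ and $x_{jk}\in X$. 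The easy direction ``$X$ has $(\alpha)\Rightarrow\{T_t\}$ is $\gamma$-bounded'' is then an immediate application of (\ref{propalpha}) to the unimodular scalars $s_{jk}:=e^{-it_j\xi_k}$, for which $\max_{j,k}|s_{jk}|=1$.

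The converse is where the main work lies. Assuming the displayed inequality for every choice of $t_j$, I would aim to deduce (\ref{propalpha}) for arbitrary scalars; by rescaling and homogeneity it is enough to treat the unimodular case $s_{jk}=e^{i\theta_{jk}}$. The hard part will be to realize a prescribed phase matrix $(\theta_{jk})_{j\leq J,\,k\leq N}$ in the form $-t_j\xi_k\pmod{2\pi}$, which is exactly Kronecker's simultaneous approximation theorem and requires the arithmetic hypothesis that $(\xi_k)$ be $\Q$-linearly independent (a condition one may freely impose on the construction, since the $\xi_k$ are otherwise arbitrary distinct reals). Under this hypothesis the orbit $\{(e^{-it\xi_1},\ldots,e^{-it\xi_N}):t\in\R\}$ is dense in $\mathbb{T}^N$ for every $N$, so suitable $t_1,\ldots,t_J\in\R$ can be chosen with $e^{-it_j\xi_k}$ arbitrarily close to $e^{i\theta_{jk}}$; passing to the limit in the $\gamma$-boundedness inequality then delivers property $(\alpha)$.
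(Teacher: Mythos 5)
Part (1) of your proposal is correct, and it takes a genuinely different route from the paper. You diagonalize: each $\gamma_k\otimes x_k$ is an eigenvector of $A$ with eigenvalue $i\xi_k$, you evaluate the Dunford integral by residues to get $f(A)(\gamma_k\otimes x_k)=f(i\xi_k)\,\gamma_k\otimes x_k$ for $f\in\mathcal{E}(R_\alpha)$, and you conclude with the contraction principle \eqref{CP}, density of finite Gaussian sums, continuity of $f$ on $\overline{R_0}$, and Remark \ref{Reduction}. The paper instead writes $f=\mathcal{L}b$ with $b\in L^1(\R_+)$ and $f(A)=\int_0^\infty b(t)T_t\,dt$ via \cite[Lemma 5.1]{haa2}, and applies \eqref{CP} to the multipliers $\hat b(\xi_k)$. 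Your version is more self-contained (no Laplace-representation lemma, at the cost of some contour/orientation bookkeeping, whose outcome is the correct one). The implication ``$X$ has $(\alpha)$ $\Rightarrow$ $(T_t)_{t\in\R}$ is $\gamma$-bounded'' in (2) coincides with the paper's argument, and your reduction of \eqref{propalpha} to unimodular entries is harmless (any scalar of modulus at most one is an average of two unimodular ones).

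The converse in (2), however, has a genuine gap. The proposition concerns the group built from an \emph{arbitrary} sequence of distinct reals $(\xi_k)_{k\geq1}$ fixed before \eqref{DefSG}, whereas your argument needs the additional arithmetic hypothesis that the $\xi_k$ are linearly independent over $\Q$: only then is the orbit $\{(e^{-it\xi_1},\ldots,e^{-it\xi_N})\,:\,t\in\R\}$ dense in $\mathbb{T}^N$, which is what Kronecker's theorem gives you and what you need to approximate an arbitrary unimodular matrix by matrices $(e^{-it_j\xi_k})_{j,k}$. For a sequence such as $\xi_k=k$ the orbit closure is a proper one-parameter closed subgroup of $\mathbb{T}^N$, the approximation step collapses, and your proof yields nothing, although the statement remains true for that choice. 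The paper handles arbitrary distinct $\xi_k$ by a different mechanism: it introduces the contractive homomorphism $w:C_0(\R)\to\mathcal{L}(G(X))$ with $w(f)\bigl(\sum_k\gamma_k\otimes x_k\bigr)=\sum_k f(\xi_k)\gamma_k\otimes x_k$, observes that failure of property $(\alpha)$ forces the set $\{w(f)\,:\,\norme{f}_\infty\leq1\}$ to be non-$\gamma$-bounded (one interpolates arbitrary data of modulus at most one at the finitely many distinct points $\xi_k$ by $C_0$-functions of norm one; no rational independence is needed), and then invokes the transference result \cite[Theorem 4.4]{lem1}, applicable because $w$ extends $(T_t)_{t\in\R}$ through $\int_{\R}b(t)T_t\,dt=w(\hat b)$; thus $\gamma$-boundedness of the group would force $\gamma$-boundedness of that set, a contradiction. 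This citation is precisely the ingredient your elementary Kronecker argument replaces only under rational independence. Your variant would suffice for the paper's use of the proposition in Example \ref{ex}, since there one may simply choose the $\xi_k$ rationally independent, but it does not prove Proposition \ref{Propequinotalpha} in the stated generality.
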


\begin{proof}
For any $b\in L^1(\R)$, we let $\int_{\R}b(t)T_tdt\,\in \mathcal{L}(G(X))$ denote the operator defined by
$$
\bigl(\int_{\R}b(t)T_tdt\bigr)(z)\, =\,\int_{0}^{\infty}b(t)T_t(z)dt,\quad z\in G(X).
$$
If $b \in L^1(\R_{+})$, we let $\mathcal{L}b$ denote the Laplace transform of $b$, that is,
\[
\mathcal{L}b(z) = \int_{0}^{\infty}e^{-zt}b(t)dt, \; z\in \overline{R_0}.
\]
Obviously $\mathcal{L}b$ is continuous and bounded on $\overline{R_0}$ and
its restriction to $R_0$ belongs to $H^{\infty}(R_0)$.

For any $b\in L^1(\R)$ and any $x_1,\ldots,x_n\in X$, we have
\begin{equation}\label{wfourb} 
\big(\int_{\R}b(t)T_tdt\big)
(\sum_{k=1}^n\gamma_k \otimes x_k) =	
\int_{\R}b(t)\sum_{k=1}^ne^{-i\xi_k t}\gamma_k \otimes x_k dt = 
\sum_{k=1}^n\widehat{b}(\xi_k)\gamma_k \otimes x_k.
\end{equation}
If $b \in L^1(\R_{+})$, this implies, using
(\ref{CP}),  that
\begin{equation}\label{Ex1-1}
\norme{\int_{0}^{\infty}b(t)T_tdt}_{\mathcal{L}(G(X))} 
\leq	 \norme{\mathcal{L}b}_{H^{\infty}(R_0)}.
\end{equation}

Now let $\alpha < 0$ and let $f \in \mathcal{E}(R_{\alpha})$. According to 
\cite[Lemma 5.1]{haa2}, there is a (necessarily unique) 
$b \in L^1(\R_+)$ such that $f = \mathcal{L}b$ on $R_0$ and
$$
f(A) = \int_{0}^{\infty}b(t)T_t dt.
$$
The estimate (\ref{Ex1-1}) therefore implies that 
\[
\norme{f(A)} \leq \norme{\mathcal{L}b}_{H^{\infty}(R_0)} = \norme{f}_{H^{\infty}(R_0)}.
\]
This shows (1).

We now turn to the proof of (2). First assume that 
$X$ has property $(\alpha)$. Let $(t_j)_j$ be a finite family of real numbers and 
for any $j$, let $z_j=\sum_k\gamma_j\otimes x_{jk}$ be a finite Gaussian sum.
We have
$$
\sum_j\gamma_j\otimes T_{t_j}(z_j)\, =\,\sum_{i,j} e^{-i t_j\xi_k}
\gamma_j\otimes\gamma_k\otimes x_{jk}.
$$
Applying \eqref{propalpha} we deduce that 
$$
\norme{\sum_j\gamma_j\otimes T_{t_j}(z_j)}_{G(G(X))}\,\leq \, C
\norme{\sum_{i,j}
\gamma_j\otimes\gamma_k\otimes x_{jk}}_{G(G(X))}\, =\, C\norme{\sum_j\gamma_j\otimes z_j}_{G(G(X))}.
$$
Since the set of all finite Gaussian sums is dense in $G(X)$, this shows that
$(T_t)_{t \in \R}$ is $\gamma$-bounded.

Assume on the contrary that $X$ does not have property $(\alpha)$.\hspace*{0.2cm}By (\ref{CP}), there exists a 
(necessarily unique) contractive,
non degenerate, homomorphism 
$$
w : C_0(\R) \rightarrow \mathcal{L}(G(X))
$$
such that
\[
w(f)(\sum_{k=1}^{n} \gamma_k \otimes x_k) = \sum_{k=1}^{n} f(\xi_k) \gamma_k\otimes x_k
\]
for any $n\geq 1$ and any $x_1,\ldots,x_n\in X$. We claim that
the set 
$$
S= \bigl\{w(f)\, :\, f \in C_0(\R),\ \norme{f}_\infty\leq 1 \bigr\}
$$ 
is not $\gamma$-bounded. 
Indeed let $(t_{ik})$ and $(x_{ik})$ be finite families in
$\C$ and $X$, respectively, and assume that $\vert t_{ik}\vert\leq 1$
for any $i,k$. There exist $f_i\in C_0(\R)$ such that $f_i(\xi_k)=t_{ik}$ 
and $\norme{f_i}_\infty\leq 1$ for any $i,k$. Then
$$
\sum_{i,k}\gamma_i\otimes \gamma_k\otimes t_{ik} x_{ik}
\, =\, \sum_i \gamma_i \otimes w(f_i)\Bigl(\sum_k \gamma_k\otimes x_{ik}\Bigr).
$$
If 
$S$ were $\gamma$-bounded, this would imply that the norm of the left hand side 
is dominated by the norm of $\sum_{i,k}\gamma_i\otimes \gamma_k\otimes x_{ik}$,
which would imply property $(\alpha)$.

Note that the homomorphismm $w$ `extends' $(T_t)_{t\in\R}$ in the sense 
of \cite[Definition 2.4]{lem1}. Indeed, according to \eqref{wfourb}, we have 
\[
\int_{\R}b(t)T_tdt= w(\hat b)
\]
for any $b\in L^1(\R)$. Therefore 
if $(T_t)_{t \in \R}$ were $\gamma$-bounded, then according to \cite[Theorem 4.4]{lem1}, 
the above set $S$ would be $\gamma$-bounded. We just noticed 
that this does not hold true. Hence, 
$(T_t)_{t \in \R}$ is not $\gamma$-bounded.
\end{proof}

\begin{ex}\label{ex}
Proposition
\ref{Propequinotalpha} provides an example of an operator which satisfies $(i)$ without satisfying 
$(iii)_{\gamma}$, for $\omega=0$. 
Indeed, assume that $X$ does not have property $(\alpha)$
and let $(T_t)_{t\in \R}$ (with generator $-A$)
be given by the above proposition. Changing $T_t$ into $T_{-t}$, part (1) of Proposition
\ref{Propequinotalpha} shows that $A$ and $-A$ have a bounded
$H^{\infty}$-bounded functional calculus of type $0$. However by part (2) of Proposition
\ref{Propequinotalpha}, either $(T_t)_{t \geq 0}$ or $(T_{-t})_{t \geq 0}$ is not $\gamma$-bounded. 
\end{ex}

We do not know if $(i)$ implies $(iii)_{\gamma}$ on non Hilbertian Banach spaces with property $(\alpha)$.
In particular, we do not know if $(i)$ implies $(iii)_{\gamma}$ on $L^p$-spaces, for $1<p\not= 2<\infty$.

\bibliographystyle{plain}
\bibliography{article}

\end{document}